\theoremstyle{plain}
\newtheorem{theorem}{Theorem}[section]
\newtheorem{lemma}[theorem]{Lemma}
\newtheorem{corollary}[theorem]{Corollary}
\newtheorem{proposition}[theorem]{Proposition}
\theoremstyle{definition}
\newtheorem{definition}[theorem]{Definition}
\theoremstyle{remark}
\newtheorem*{remark}{Remark}
\newtheorem*{example}{Example}
 \DeclareMathOperator{\Ext}{Ext}
\DeclareMathOperator{\Hom}{Hom} 
\DeclareMathOperator{\tr}{tr}
\DeclareMathOperator{\End}{End}
\DeclareMathOperator{\Ima}{Im}
\DeclareMathOperator{\proj}{proj} \DeclareMathOperator{\Id}{Id}
\DeclareMathOperator{\ind}{ind}
\DeclareMathOperator{\summ}{summ}
\newcommand{\Z}{{\mathbb{Z}}}
\newcommand{\F}{{\mathbb{F}}}
\begin{document}

\title{Exterior and Symmetric Powers of Modules for Cyclic 2-Groups}

\author{Frank Himstedt}
\address{Technische Universit\"at M\"unchen\\
         Zentrum Mathematik - M11\\
         Boltzmannstr.\ 3\\
         85748 Garching\\
         Germany}
\email{himstedt@ma.tum.de}

\author{Peter Symonds}
\address{School of Mathematics\\
         University of Manchester\\
     Manchester M13 9PL\\
     United Kingdom}
\email{Peter.Symonds@manchester.ac.uk}
\thanks{This project was supported by the Deutsche
  Forschungsgemeinschaft under the project KE 964/1-1
  (``Invariantentheorie endlicher und algebraischer Gruppen'').}
%\keywords{keywords}
%\subjclass{Primary: subject; Secondary: subject}
%\date{}

\begin{abstract}
We prove a recursive formula for the exterior and symmetric powers of
modules for a cyclic 2-group. This makes computation
straightforward. Previously, a complete description was 
only known for cyclic groups of prime order. 
\end{abstract}

\maketitle

%%%%%%%%%%%%%%%%%%%%%%%%%%%%%%%%%%%%%%%%%%%%%%%%%%%%%%%%%%%%%%%%%%%%%%%%%%%%%%%%

\section{Introduction}
\label{sec:intro}

The aim of this paper is to provide a recursive procedure for
calculating the exterior and symmetric powers of a modular
representation of a cyclic 2-group. Let $G \cong C_{2^n}$ be a cyclic
group of order $2^n$ and $k$ a field of characteristic $2$.
Recall that there are $2^n$ indecomposable $kG$-modules
$V_1,V_2, \ldots , V_{2^n}$ for which $\dim V_r=r$. 

\begin{theorem} \label{ext}
For all $n \ge 1$, $r \ge 0$ and $0 \leq s \leq 2^{n-1}$ we have
\[
\Lambda^r(V_{2^{n-1}+s}) \cong \bigoplus_{\genfrac{}{}{0pt}{1}{i,j \ge 0}{2i+j=r}} \Omega
_{2^n}^{i+j}( \Lambda ^i (V_s) \otimes_k \Lambda ^j (V_{2^{n-1}-s}))
\oplus t V_{2^n},
\]
where $t$ is a non-negative integer chosen so that both sides have the
same dimension.
\end{theorem}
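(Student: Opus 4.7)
The plan is to work in the stable module category of $kG$, where the projective $V_{2^n}$ becomes zero, Heller translation satisfies $\Omega V_r \cong V_{2^n-r}$ and $\Omega^2 \cong \mathrm{id}$, and to reduce the claimed identity to a stable isomorphism. Once the formula is established modulo projectives, the non-negative integer $t$ is forced by comparing the total dimensions of the two sides.

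The central input will be the short exact sequence
\[
0 \to V_{2^{n-1}-s} \to V_{2^{n-1}+s} \to V_{2s} \to 0,
\]
arising from $V_{2^{n-1}-s} \cong x^{2s} V_{2^{n-1}+s}$ (where $x = g-1$ and $g$ generates $G$), together with the identification $V_{2s} \cong \Ind_H^G V_s$, where $H = \langle g^2\rangle$ is the index-$2$ subgroup. Applying the Koszul filtration of $\Lambda^r$ to this extension decomposes (stably) $\Lambda^r V_{2^{n-1}+s}$ into pieces $\Lambda^a V_{2^{n-1}-s}\otimes \Lambda^b V_{2s}$ with $a+b=r$. Next, $\Lambda^b \Ind_H^G V_s$ is analysed by the Mackey-type formula for exterior powers of an induced module from an index-$2$ subgroup: it splits as a $kG$-module into induced pieces $\Ind_H^G(\Lambda^p V_s \otimes \Lambda^q V_s)$ for $p<q$, $p+q=b$, plus a diagonal ``swap'' summand on $\Lambda^{b/2} V_s \otimes \Lambda^{b/2} V_s$ when $b=2i$ is even.

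The heart of the proof is the stable identification of this diagonal swap piece with $\Omega^i \Lambda^i V_s$, a characteristic-$2$ phenomenon stemming from the Frobenius relation between $\Lambda^2$, $S^2$, and the tensor square with swap, combined with $\Omega^2 \cong \mathrm{id}$. Setting $i = b/2$ and $j = a$ and collecting the surviving terms then reproduces $\bigoplus_{2i+j=r} \Omega^{i+j}(\Lambda^i V_s \otimes \Lambda^j V_{2^{n-1}-s})$: the factor $\Omega^i$ arises from the diagonal swap identification, while $\Omega^j$ comes from the Koszul piece involving $\Lambda^j V_{2^{n-1}-s}$. An induction on $n$, with the classical case of cyclic groups of prime order as the base, will be used to handle the induced summands that do not survive stably.

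The main obstacles are (i) verifying that the Koszul filtration splits stably for this extension, (ii) carrying out the characteristic-$2$ stable identification of the diagonal swap module with $\Omega^i \Lambda^i V_s$ up to projective summands, and (iii) correctly tracking cancellations among induced summands and $\Omega$-shifts so that the precise indexing $2i+j=r$ and cumulative shift $\Omega^{i+j}$ emerge as claimed. Each of these rests on subtle interactions between Frobenius twists, tensor squares with $C_2$-swap, and Heller translation specific to characteristic $2$ over a cyclic $2$-group, none of which follow formally from the general axioms of the stable module category.
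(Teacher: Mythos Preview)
Your approach has a genuine gap at its core: the ``diagonal swap'' identification is false. The diagonal summand of $\Lambda^{2i}(\Ind_H^G V_s)$ is the tensor-induced module $(\Lambda^i V_s)\!\uparrow^{\otimes G}_H$, and for $|G:H|=2$ this is \emph{not} stably isomorphic to $\Omega^i\Lambda^i V_s$. In fact (see Lemma~3.5 of the paper), for any $kH$-module $W$ the tensor-induced module $W\!\uparrow^{\otimes G}_H$ is, modulo induced summands, a direct sum of trivial modules $V_1$, one for each odd-dimensional indecomposable summand of $W$. Take the smallest nontrivial case $n=2$, $s=1$, $r=2$: your recipe yields, modulo induced summands, $\Lambda^0 V_1\otimes\Lambda^2 V_2\cong V_1$ (since $\Lambda^2 V_2\cong V_1$ and the tensor-induced piece $V_1\!\uparrow^{\otimes G}_H$ is again $V_1$, not $\Omega V_1=V_3$). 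But $\Lambda^2 V_3\cong V_3$ over $C_4$, as the theorem predicts. So step (ii) of your plan fails already in this example.

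There is a second structural problem. The Koszul \emph{filtration} of $\Lambda^r$ along a short exact sequence $0\to A\to B\to C\to 0$ has associated graded pieces $\Lambda^a A\otimes\Lambda^{r-a}C$ with no Heller shifts attached; a filtration is not a complex, and even in the stable category there is no mechanism by which the $j$-th subquotient acquires a factor $\Omega^j$. Your assertion that ``$\Omega^j$ comes from the Koszul piece involving $\Lambda^j V_{2^{n-1}-s}$'' has no justification. By contrast, the paper works with a genuine Koszul \emph{complex} $K(V_{2^{n-1}+s},V_s^2)$ over the symmetric algebra, whose $i$-th term is $S^{r-2i}(V_{2^{n-1}+s})\otimes\Lambda^i(V_s)$; proving that this complex is \emph{separated} (each boundary image factors through a projective) is the hard part, and it is precisely this that produces the Heller shifts via Lemma~2.5. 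The passage from $S^j$ to $\Omega^j\Lambda^j(V_{2^{n-1}-s})$ then comes from the separate Theorem~1.2 on symmetric powers. Your short exact sequence $0\to V_{2^{n-1}-s}\to V_{2^{n-1}+s}\to V_{2s}\to 0$ and the exterior-power filtration it induces do not give access to either of these mechanisms.
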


Here $\Omega _{2^n}$ is the syzygy or Heller operator over $C_{2^n}$,
so $\Omega _{2^n} V_s = V_{2^n-s}$ for $1 \leq s \leq 2^{n}$.  
The group action on $V_1, \dots, V_{2^{n-1}}$ factors through
$C_{2^{n-1}}$ so that exterior powers of these modules can
be computed by applying the formula for this smaller group. In
particular, one can determine the exterior powers on the right hand
side of the formula in this way. We also show that there is a simple
recursive procedure for calculating tensor products. Since 
$\Lambda(A \oplus B) \cong \Lambda(A) \otimes \Lambda(B)$, we
obtain a complete recursive procedure for calculating exterior powers
of all possible modules. It is sufficiently efficient that it is easy
to calculate even by hand far beyond the range that was previously
attainable by machine computation.

For symmetric powers we use the following result from \cite{scyclic}.

\begin{theorem} \label{sym}
For all $n \ge 1$, $r \ge 0$ and $0 \leq s \leq 2^{n-1}$ we have
\[
S^r(V_{2^{n-1}+s}) \cong_{\ind} \Omega _{2^n}^{r'}
\Lambda^{r'}(V_{2^{n-1}-s}),
\]
where $0 \leq r' < 2^{n}$ and $r' \equiv r \pmod{2^{n}}$. Here the
symbol $\cong_{\ind}$ means up to direct summands induced from
subgroups $H \lvertneqq G$.
\end{theorem}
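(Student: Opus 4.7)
The plan is to work in the stable module category of $kC_{2^n}$ modulo summands induced from proper subgroups, in which the claimed isomorphism is to be verified. A key observation is that $\Omega_{2^n}^2 \cong \mathrm{Id}$ stably, since $\Omega_{2^n} V_s = V_{2^n-s}$ and applying this twice returns $V_s$. Thus $\Omega_{2^n}^{r'}$ depends only on the parity of $r'$, so the right-hand side $\Omega_{2^n}^{r'} \Lambda^{r'}(V_{2^{n-1}-s})$ is a well-defined stable object for each $r$.

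The main tool I would use is the Koszul complex linking symmetric and exterior algebras. For $V = V_{2^{n-1}+s}$ of dimension $d = 2^{n-1}+s$, the exact sequence
\[
0 \to \Lambda^d V \otimes S^{r-d} V \to \cdots \to \Lambda^1 V \otimes S^{r-1} V \to S^r V \to 0
\]
yields an alternating-sum identity in the Grothendieck group of the quotient category, letting me express $S^r V$ recursively in terms of $S^{j} V$ for $j < r$ together with the $\Lambda^i V$. Combining this with Theorem \ref{ext}, which describes each $\Lambda^i V$ in terms of $\Omega$-shifted tensor products of $\Lambda^\bullet V_s$ and $\Lambda^\bullet V_{2^{n-1}-s}$, I would proceed by induction on $r$. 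The base cases are immediate: $S^0 V = k = \Lambda^0(V_{2^{n-1}-s})$, and $S^1 V = V_{2^{n-1}+s} = \Omega_{2^n} V_{2^{n-1}-s} = \Omega_{2^n}^1 \Lambda^1(V_{2^{n-1}-s})$.

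A crucial intermediate step is to establish periodicity $S^{r+2^n} V \cong_{\ind} S^r V$. This should follow from the top of the Koszul resolution: since $\Lambda^d V$ is one-dimensional and hence trivial as a module for the $2$-group $G$, one obtains a relation which, after iterated application, gives periodicity with period $2^n$. Once periodicity is in hand, it suffices to verify the formula for $0 \leq r < 2^n$, which the inductive Koszul argument should achieve.

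The main obstacle I anticipate is the bookkeeping involved in combining the Koszul alternating sum with the expansion of $\Lambda^i V$ from Theorem \ref{ext}: each $\Lambda^i V$ already decomposes into many $\Omega$-twisted tensor products, and reorganizing the resulting double sum so as to isolate the single term $\Omega_{2^n}^{r'} \Lambda^{r'}(V_{2^{n-1}-s})$ modulo induced summands will require careful matching of coefficients and several cancellations. An alternative route worth exploring is a direct calculation via the Jordan block structure of $V_{2^{n-1}+s}$, comparing the action of a generator of $G$ on each side to pin down the multiplicities of the non-induced indecomposable constituents; this may bypass some of the combinatorial bookkeeping at the cost of more explicit linear algebra.
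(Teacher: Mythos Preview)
There are two serious problems with this approach.

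First, it is circular within the logical structure of the paper. Theorem~\ref{sym} is not proved here; it is quoted from \cite{scyclic} and then used as an essential ingredient in the proof of Theorem~\ref{ext} (see for instance Corollary~\ref{cor:lambdaeven}, Lemma~\ref{la:sep_equiv}, and the Hilbert-series identities in~(\ref{eq:HSform})). Invoking Theorem~\ref{ext} to deduce Theorem~\ref{sym} therefore begs the question.

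Second, even granting Theorem~\ref{ext} as input, the argument has a genuine gap. Exactness of the Koszul complex $0 \to \Lambda^d V \otimes S^{r-d}V \to \cdots \to S^r V \to 0$ does \emph{not} by itself yield an alternating-sum identity in the Green ring $a(G)$ or in $a(G)/a_I(G)$: short exact sequences of $kG$-modules need not split, so e.g.\ $0 \to V_1 \to V_2 \to V_1 \to 0$ does not give $V_2 = 2V_1$ in $a(G)$. What is actually needed is that the complex be \emph{separated} in the sense of Definition~\ref{def:complex}; only then does Lemma~\ref{la:homol1} convert it into a direct-sum statement involving $\Omega$-translates. Establishing separation is precisely the hard part, and your proposal does not address it. The periodicity claim is similarly unsupported: the Koszul complex on $V$ has length $d = 2^{n-1}+s$, so the recursion it produces has step size $d$, not $2^n$; extracting period $2^n$ from this requires additional input that you have not identified.

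For comparison, the proof in \cite{scyclic} (summarized in the second remark after~(\ref{eq:HSform})) uses a \emph{different} Koszul complex, namely one over $S(V_{2^n})$ with respect to a basis of a copy of $V_{2^{n-1}-s}$ inside $V_{2^n}$, whose $H_0$ is $S(V_{2^{n-1}+s})$. That complex is shown to be separated, and Lemma~\ref{la:homol1} then gives $\sigma_t(V_{2^{n-1}+s}) =_{\proj} \sigma_t(V_{2^n})\,\lambda^{\Omega}_t(V_{2^{n-1}-s})$. Both the period $2^n$ and the passage to $\cong_{\ind}$ come from the permutation-module structure of $S(V_{2^n})$, not from anything like Theorem~\ref{ext}.
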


 Thus a knowledge of the
exterior powers determines the symmetric powers up to induced
summands. In fact it is shown in \cite{scyclic} how such a formula
determines the symmetric powers completely, using a recursive procedure.

Formulas for the exterior and symmetric powers of a module for a
cyclic group of prime order $p$ were given by Almkvist and Fossum
\cite{AlmkvistFossum} and Renaud~\cite{renaud2}. These were
extended~to cyclic p-groups by Hughes and Kemper
\cite{HughesKemperHilbert} provided that the power is at most $p-1$.
A formula for $\Lambda^2$ in the case of cyclic 2-groups was given by
Gow and Laffey~\cite{gowlaffey}. Also Kouwenhoven~\cite{kouwenhoven}
obtained important results on exterior powers of modules for cyclic
p-groups, including recursion formulas for $\Lambda(V_{q \pm 1})$
where $q$ is a power of $p$. For $p=2$ these formulas are special
cases or direct consequences of Theorem~\ref{ext}, so we obtain
independent proofs for some of the results in \cite{gowlaffey, kouwenhoven}.

Our strategy is to consider $\Lambda(V_{2^{n-1}+s})$ as the
quotient of $S(V_{2^{n-1}+s})$ by the ideal generated by
the squares of elements of $V_{2^{n-1}+s}$. It turns out that we need
to consider an intermediate ring $\tilde{S}(V_{2^{n-1}+s})$, in which
we only quotient out the squares of the elements of 
$V_s \subseteq V_{2^{n-1}+s}$. We show that 
$\tilde{S} ^r (V_{2^{n-1}+s}) \cong_{\ind} \Lambda ^r (V_{2^{n-1}+s})$
for $r < 2^n$.
But $\tilde{S}(V_{2^{n-1}+s})$ can be resolved by the Koszul
complex over $S(V_{2^{n-1}+s})$ on the squares of the elements of a basis for
$V_s$. We show that this Koszul complex is separated in the sense of
\cite{scyclic}, that is that the image of a boundary map is contained
in a projective submodule. This leads to the formula
\[
\tilde{S}^r(V_{2^{n-1}+s}) \cong_{\proj} \bigoplus _{2i+j=r} \Omega _{2^n}^i
( \Lambda ^i (V_s) \otimes_k S^j(V_{2^{n-1}+s})),
\]
where the symbol $\cong_{\proj}$ means up to projective summands.
Using Theorem \ref{sym}, the right hand side is easily seen to be
equal to the right hand side of the formula in Theorem~\ref{ext}
modulo induced summands. This yields the formula of Theorem~\ref{ext}
modulo induced summands. The strengthening to an equality modulo just
projective summands is a formal inductive argument.

We would like to thank Dikran Karagueuzian for the calculations that
were very helpful in discovering the formula of Theorem \ref{ext}.

%%%%%%%%%%%%%%%%%%%%%%%%%%%%%%%%%%%%%%%%%%%%%%%%%%%%%%%%%%%%%%%%%%%%%%%%%%%%%%%%

\section{Koszul Complexes}
\label{sec:koszul}

Let $G$ be a finite group, $H$ a subgroup of $G$ and $k$ a field
of characteristic $p>0$. All tensor products will be over $k$ if not
otherwise specified. We recall some general facts about chain
complexes of $kG$-modules from~\cite[Section~3]{scyclic}.

\begin{definition}(\cite[Definition~3.2]{scyclic}) \label{def:complex}
A chain complex $C_*$ of $kG$-modules is called:
\begin{enumerate}
\item[(a)] \textit{acyclic} if it is 0 in negative degrees and it only
  has homology in degree 0; 

\item[(b)] \textit{weakly induced from $H$} if each module is induced from $H$,
  and \textit{weakly induced from~$H$ except in degrees~$I$} if each $C_i$,
  $i \not \in I$, is induced from $H$;

\item[(c)] \textit{separated at $C_i$} if $\Ima(d_{i+1}) \rightarrow C_i$
  factors through a projective $kG$-module;

\item[(d)] \textit{separated} if it is separated at each $C_i$.
\end{enumerate}
\end{definition}

Write $B_i$ for $\Ima(d_{i+1}) \subseteq C_i$. If the inclusion
$B_i \hookrightarrow C_i$ factors through a projective then it factors
through the injective hull of $B_i$, call it $P_i$ (injective is
equivalent to projective for modular representations), and $P_i \hookrightarrow C_i$
is injective since it is so on the socle. Thus we can write
$C_i = P_i \oplus C_i'$ and $B_i \subseteq P_i$.

\begin{lemma}(\cite[Lemma~3.9]{scyclic})
\label{la:tensep}
If the chain complexes $C_*$, $C_*'$ of $kG$-modules are separated
then so is the (total) tensor product $C_* \otimes C_*'$. Similarly
for a product of finitely many chain complexes.
\end{lemma}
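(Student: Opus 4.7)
The plan is to unwind the separation hypothesis into the projective direct summand decomposition given in the remark after Definition~\ref{def:complex}, and then track where the total differential on $C_* \otimes C'_*$ lands. Using the remark, write $C_i = P_i \oplus M_i$ with $P_i$ projective and $B_i := \Ima(d_{i+1}) \subseteq P_i$, and similarly $C'_j = P'_j \oplus M'_j$ with $B'_j \subseteq P'_j$. By the Leibniz rule, for $x \in C_{i+1}$ and $y \in C'_j$ we have $d(x \otimes y) = dx \otimes y \pm x \otimes dy$ with $dx \in B_i$ and $dy \in B'_{j-1}$. Summing over pairs with $i+j=n$, the image of $d_{n+1}$ in
\[
(C_* \otimes C'_*)_n = \bigoplus_{a+b=n} C_a \otimes C'_b
\]
is contained in
\[
Q_n := \bigoplus_{a+b=n} \bigl(P_a \otimes C'_b + C_a \otimes P'_b\bigr) \subseteq (C_* \otimes C'_*)_n.
\]

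The main step is then to identify $Q_n$ as a projective direct summand of $(C_* \otimes C'_*)_n$. Expanding $C_a \otimes C'_b = (P_a \oplus M_a) \otimes (P'_b \oplus M'_b)$ into four pieces, I would check that $P_a \otimes C'_b + C_a \otimes P'_b$ is precisely the complementary direct summand of $M_a \otimes M'_b$, namely
\[
(P_a \otimes P'_b) \oplus (P_a \otimes M'_b) \oplus (M_a \otimes P'_b).
\]
Each of these three pieces has a projective tensor factor, and over a group algebra the tensor product of a projective module with any module is again projective. Hence each summand making up $Q_n$ is a projective direct summand of the corresponding $C_a \otimes C'_b$, and $Q_n$ is a projective direct summand of $(C_* \otimes C'_*)_n$.

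It then follows that the inclusion $\Ima(d_{n+1}) \hookrightarrow (C_* \otimes C'_*)_n$ factors through the projective $Q_n$, which is the separation condition at degree $n$; varying $n$ yields separation of $C_* \otimes C'_*$, and the extension to a product of finitely many separated complexes follows by an obvious induction. The main obstacle I foresee is verifying that $P_a \otimes C'_b + C_a \otimes P'_b$ is genuinely a direct summand of $C_a \otimes C'_b$ rather than merely a projective submodule; without this, the separation data on the factors would not assemble into separation data on the total complex. The four-term decomposition above handles this cleanly, after which the rest reduces to the standard fact that projective $kG$-modules are closed under tensoring with arbitrary modules.
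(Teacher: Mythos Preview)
Your proof is correct and follows essentially the same line as the paper's: both show that the image of the total differential lands in $\bigoplus_{a+b=n}(P_a \otimes C'_b + C_a \otimes P'_b)$ and then verify this submodule is projective. The only difference is cosmetic: the paper establishes projectivity via the short exact sequence
\[
0 \longrightarrow P \otimes P' \longrightarrow (P \otimes C') \oplus (C \otimes P') \longrightarrow P \otimes C' + C \otimes P' \longrightarrow 0,
\]
whereas you do it by the explicit four-term decomposition identifying $P_a \otimes C'_b + C_a \otimes P'_b$ with the complement of $M_a \otimes M'_b$. One remark: your worry that $Q_n$ must be a \emph{direct summand} is unnecessary for the argument, since the definition of separation only asks that the inclusion of the image factor through some projective; once $Q_n$ is projective and contains the image, you are done.
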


\begin{proof}
Let $P_i$ be a projective module such that
$\Ima (d_{i+1}) \subseteq P_i \subseteq C_i$ and similarly for
$P'_i$. Then, summing over all degrees,
$\Ima (d \otimes d') \subseteq P \otimes C' + C \otimes P'$ where 
$P := \bigoplus_i P_i$ and $P' := \bigoplus_i P'_i$. There is
a short exact sequence $0 \rightarrow P \otimes P' \rightarrow P
\otimes C' \oplus C \otimes P' \rightarrow P \otimes C + C \otimes P
\rightarrow 0$. The first two terms are projective, hence so is the third.
\end{proof}

We need to consider tensor-induced complexes. For details of the
construction see \cite[II 4.1]{benson}. 

\begin{lemma} \label{la:tenindsep}
Suppose that every elementary abelian p-subgroup of $G$ is conjugate
to a subgroup of $H$, and let $C_*$ be a complex of $kH$-modules that
is separated. Then the tensor-induced complex
${C_* \hspace{-0.1cm} \uparrow}^{\otimes G}_H$ is also separated.
\end{lemma}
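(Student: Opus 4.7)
The plan is to locate the boundary $B_*(D)$ of the tensor-induced complex $D_* := C_* \hspace{-0.1cm}\uparrow^{\otimes G}_H$ inside a submodule that can then be shown to be projective over $kG$ via Chouinard's theorem.

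Since $C_*$ is separated, for each $i$ there is a decomposition $C_i = P_i \oplus C'_i$ as a $kH$-module with $P_i$ projective and $B_i \subseteq P_i$. Writing $C = P \oplus C'$ as a graded $kH$-module, the multiplicative distributivity of tensor induction along direct sums yields
\[
D \;=\; (P \oplus C') \hspace{-0.1cm}\uparrow^{\otimes G}_H \;\cong\; \bigoplus_{[I]} \Ind^G_{\Stab(I)} \bigl( P^I \otimes (C')^{I^c} \bigr),
\]
where $[I]$ runs over $G$-orbits of subsets $I \subseteq G/H$ (with $G$ acting by translation), and $P^I$ (resp.\ $(C')^{I^c}$) denotes the tensor product of copies of $P$ (resp.\ $C'$) indexed by positions in $I$ (resp.\ $I^c$). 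The tensor-induced differential on $D$ is the graded tensor-product differential: on a pure tensor $v_1 \otimes \cdots \otimes v_n$ with $n=[G{:}H]$, it yields terms of the form $\pm v_1 \otimes \cdots \otimes d_C v_j \otimes \cdots \otimes v_n$, each of which has the $j$-th factor in $P$. After expanding the other $v_k$'s into their $P$- and $C'$-components, every term of $\Ima(d_D)$ has at least one factor in $P$, so
\[
B_*(D) \;\subseteq\; Q \;:=\; \bigoplus_{[I] \ne [\emptyset]} \Ind^G_{\Stab(I)} \bigl( P^I \otimes (C')^{I^c} \bigr).
\]

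It remains to show $Q$ is projective over $kG$. By Chouinard's theorem, projectivity can be detected by restriction to each elementary abelian $p$-subgroup $E \leq G$. By the hypothesis, each such $E$ is conjugate to a subgroup of $H$, so after conjugating we may assume $E \leq H$. Restricting each induced summand of $Q$ to $E$ via the Mackey formula produces a further sum of tensor products. The key point is that every summand of $Q$ with $I \neq \emptyset$ has at least one factor in $P$ in its inner tensor product; after restriction to $E \leq H$, that factor becomes a restriction of a conjugate of the $kH$-projective module $P$, and hence is projective over $kE$. Since a tensor product over a group algebra involving at least one projective factor (with compatible diagonal action) is itself projective, every summand of $\Res^G_E Q$ is projective, and therefore so is $Q$.

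The main obstacle is the Mackey-theoretic bookkeeping in the final step: one must verify carefully that after restricting $\Ind^G_{\Stab(I)}(P^I \otimes (C')^{I^c})$ to $E$ and decomposing, every resulting tensor summand really does contain a projective factor. The hypothesis on elementary abelian $p$-subgroups is exactly what makes this work uniformly, since it controls the structure of the $E$-action on $G/H$ through $H$ and prevents pathological coset configurations in which the projective factors of $P^I$ would be diluted under restriction. Without this hypothesis, tensor-induced pieces built from projectives readily fail to remain projective over the ambient group.
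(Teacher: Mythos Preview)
Your strategy is the same as the paper's: both place the boundary inside the submodule $Q=\sum_i C\otimes\cdots\otimes P\otimes\cdots\otimes C$ of the tensor-induced complex and then argue via Chouinard that $Q$ is $kG$-projective. The paper does this in one line, asserting that $Q\hspace{-0.05cm}\downarrow^G_H$ is projective ``by the same proof'' as Lemma~\ref{la:tensep} and then invoking Chouinard; you instead restrict directly to each elementary abelian $E\le H$ and appeal to the Mackey formula.

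The Mackey step is where your argument is incomplete, precisely at the point you yourself flag as ``the main obstacle''. After decomposing $\Res^G_E\Ind^G_{\Stab(I)}\bigl(P^I\otimes(C')^{I^c}\bigr)$, each resulting piece carries an action of some $E'\le E$ which may genuinely permute the positions in $I$; there is then no single $P$-factor that is an $E'$-submodule, so one cannot simply ``isolate a projective tensor factor''. This is not a bookkeeping difficulty that further care will dissolve: the lemma is actually false in the stated generality. For $G=S_3$, $H=\langle(12)\rangle$, $p=2$, every elementary abelian $2$-subgroup is conjugate into $H$, and $C_*\colon kH\hookrightarrow kH\oplus k$ is separated; but for the tensor-induced complex $D_*$ one finds $D_0\cong_{\proj}3k$, $D_2\cong_{\proj}2k$, $D_1,D_3$ projective and $H_0(D_*)=k$, so Lemma~\ref{la:homol1} rules out separation of $D_*$. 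The paper's argument (and yours) does go through whenever $H$ acts trivially on $G/H$ --- e.g.\ $H\trianglelefteq G$ --- because then the restricted tensor product is genuinely diagonal over $H$ and the proof of Lemma~\ref{la:tensep} applies verbatim. This covers the paper's sole application of the lemma (the index-$2$ subgroup of a cyclic $2$-group in Lemma~\ref{la:sep}), so nothing downstream is affected.
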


\begin{proof}
By the proof of Lemma~\ref{la:tensep} above, the image
$\Ima ({d \hspace{-0.1cm} \uparrow}^{\otimes G}_H)$ is contained in
$$P \otimes C \otimes \cdots \otimes C + C \otimes P \otimes \cdots
\otimes C + \cdots + C \otimes C \otimes \cdots \otimes P,$$ which is a
$kG$-submodule of ${C_* \hspace{-0.1cm} \uparrow}^{\otimes G}_H$. But
the same proof shows that this module is projective on restriction to
$H$, so it is projective, by Chouinard's
Theorem~\cite[Corollary~1.1]{chouinard}.
\end{proof}

The next two results comprise a variation on
\cite[Proposition~3.3]{scyclic} and have the same proof.

\begin{proposition} \label{prop:homol2}
Let $H$ be an arbitrary subgroup of $G$. Suppose that the complex
$K_*: K_w \rightarrow \cdots \rightarrow K_0$ of $kG$-modules is: 
\begin{enumerate}
\item[(a)] acyclic,

\item[(b)] weakly induced from $H$ except in at most one degree and

\item[(c)] $K_*$ is separated on restriction to $H$.
\end{enumerate}
Then $K_*$ is separated.
\end{proposition}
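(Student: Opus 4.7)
The goal is to produce, for each degree $i$, a $kG$-projective submodule $P_i \subseteq K_i$ containing $B_i := \Ima(d_{i+1})$. By the remark following Definition~\ref{def:complex}, this is equivalent to the injective hull $P_i$ of $B_i$ as a $kG$-module embedding into $K_i$ and extending the inclusion $B_i \hookrightarrow K_i$. Since injective hulls are essential extensions, a $kG$-map $P_i \to K_i$ extending $B_i \hookrightarrow K_i$ is automatically injective provided it is injective on $\soc(P_i) = \soc(B_i)$. So separatedness at $K_i$ reduces to producing a $kG$-projective direct summand of $K_i$ whose socle contains $\soc(B_i)$.

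For each degree $i \ne i_0$, $K_i$ is induced from $H$, say $K_i = M_i \uparrow^G_H$. Hypothesis (c) supplies a $kH$-projective $Q_i \subseteq K_i\downarrow_H$ containing $B_i\downarrow_H$. I would then invoke the Frobenius adjunction together with the fact that induction preserves projectivity: $Q_i \uparrow^G_H$ is $kG$-projective, and the adjunction produces a $kG$-map $Q_i \uparrow^G_H \to K_i$ whose restriction to $H$ factors through $Q_i \subseteq K_i \downarrow_H$. Following the method of \cite[Proposition~3.3]{scyclic}, one extracts from this a $kG$-projective submodule of $K_i$ containing $B_i$, using the socle criterion above and, where necessary, a Mackey decomposition to isolate a projective direct summand of $K_i$ that meets $B_i$ essentially.

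The single exceptional degree $i_0$ is handled using acyclicity (a). For $i_0 \ge 1$ we have $B_{i_0} = \Ker(d_{i_0})$, fitting into $0 \to B_{i_0} \to K_{i_0} \to B_{i_0-1} \to 0$ with $B_{i_0-1} \subseteq K_{i_0-1}$ an induced module; the separatedness already established at degree $i_0-1$ then feeds back a $kG$-projective essential extension of $B_{i_0}$ in $K_{i_0}$. The case $i_0 = 0$ is analogous, using separatedness at degree $1$ to locate the required projective. The main obstacle I anticipate is the induced-degree step: a $kH$-projective submodule of $K_i \downarrow_H$ need not be $G$-stable, so upgrading it to a $kG$-projective submodule of $K_i$ requires combining the induced structure of $K_i$ with the essential-extension/socle machinery, and this is exactly the place where hypothesis (b) does the real work.
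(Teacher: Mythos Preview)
The paper does not supply a proof here; it only says the argument is the same as \cite[Proposition~3.3]{scyclic}. So let me assess your proposal on its own merits.

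Your treatment of the degrees $i\neq i_0$ has the right core idea. The cleanest phrasing is purely via Frobenius reciprocity: since $K_i=M_i\!\uparrow$, the adjunction isomorphism $\Hom_{kG}(B_i,M_i\!\uparrow)\cong\Hom_{kH}(B_i\!\downarrow,M_i)$ carries $\iota:B_i\hookrightarrow K_i$ to $\epsilon_{M_i}\circ\iota\!\downarrow$, which factors through a $kH$-projective $Q$ by hypothesis~(c); reading the adjunction back gives a factorisation of $\iota$ through the $kG$-projective $Q\!\uparrow$. Your description via ``produce a $kG$-map $Q_i\!\uparrow\to K_i$ and extract a projective submodule'' is more roundabout than necessary, but workable.

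Your treatment of the exceptional degree $i_0$, however, has a real gap. You write down the short exact sequence $0\to B_{i_0}\to K_{i_0}\to B_{i_0-1}\to 0$ and invoke separatedness at degree $i_0-1$, i.e.\ the fact that $B_{i_0-1}$ embeds in a projective summand of $K_{i_0-1}$. But knowing that the \emph{cokernel} of $B_{i_0}\hookrightarrow K_{i_0}$ embeds in a projective tells you nothing about whether $B_{i_0}\hookrightarrow K_{i_0}$ itself factors through a projective: every module embeds in an injective ($=$ projective), so this hypothesis is vacuous. There is no mechanism by which the projective at level $i_0-1$ ``feeds back'' a projective essential extension of $B_{i_0}$ inside $K_{i_0}$. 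The same objection applies verbatim to your $i_0=0$ case.

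The right direction is \emph{upward}, not downward. If $i_0=w$ then $B_{i_0}=0$ and separatedness at $K_{i_0}$ is trivial; this is in fact the only case ever invoked in the paper. For $i_0<w$, acyclicity gives an exact sequence $0\to K_w\to\cdots\to K_{i_0+1}\to B_{i_0}\to 0$ in which every term $K_j$ is induced from $H$; it is this resolution of $B_{i_0}$ by induced modules (together with hypothesis~(c) and the separatedness already established at $K_{i_0+1}$) that one uses, via the ``source-induced'' variant of the adjunction argument applied to $d_{i_0+1}$. Your proposal never exploits the induced structure above $i_0$, which is where hypothesis~(b) actually does its work for the exceptional degree.
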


Recall that the Heller translate $\Omega V$ of a $kG$-module $V$ is
defined to be the kernel of the projective cover $P(V) \rightarrow V$
and $\Omega^i V$ for $i \ge 1$ denotes $\Omega$ iterated $i$
times. Similarly $\Omega^{-1}V$ is the cokernel of the injective hull
$V \rightarrow I(V)$ and $\Omega^{-i}V$ for $i \ge 1$ is its
iteration. We let $\Omega^0 V$ denote~$V$ with any projective summands
removed. These have the properties that
$\Omega^i \Omega^j V \cong \Omega^{i+j} V$ and that if $V$ is induced so is
$\Omega^i V$.

\begin{lemma} \label{la:homol1}
Suppose that the complex
$K_*: K_w \rightarrow \cdots \rightarrow K_0$ of $kG$-modules is:
\begin{enumerate}
\item[(a)] acyclic with $H_0(K_*) = L$, and

\item[(b)] separated.
\end{enumerate}
Then $L \cong_{\proj} K_0 \oplus \Omega^{-1}K_1 \oplus \Omega^{-2}K_2
\oplus \cdots \oplus \Omega^{-w}K_w$.
\end{lemma}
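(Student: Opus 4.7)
The plan is to proceed by induction on the length $w$ of the complex. For $w=0$ the complex reduces to $L = K_0$ and the claim is immediate, so I would focus on the inductive step. To set up the induction I would form the truncated complex $K'_*$ with $K'_i := K_{i+1}$ for $0 \le i \le w-1$, equipped with the inherited differentials. This $K'_*$ has length $w-1$, is separated (the separation of $K_*$ at degrees $\ge 1$ is exactly what is needed), and is acyclic with $H_0(K'_*) \cong \Ima(d_1)$, using $\Ker(d_1) = \Ima(d_2)$ from the acyclicity of $K_*$. Applying the induction hypothesis would then give
\[
\Ima(d_1) \cong_{\proj} K_1 \oplus \Omega^{-1}K_2 \oplus \cdots \oplus \Omega^{-(w-1)}K_w.
\]

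To close the induction I would combine this with the short exact sequence $0 \to \Ima(d_1) \to K_0 \to L \to 0$ extracted from $K_*$. Using the paragraph immediately after Definition~\ref{def:complex}, separation at $K_0$ lets me write $K_0 = P_0 \oplus K_0'$ with $P_0$ projective and $\Ima(d_1) \subseteq P_0$; hence $L = (P_0/\Ima(d_1)) \oplus K_0'$, and $K_0' \cong_{\proj} K_0$. Because $P_0$ is also injective (projective equals injective for modular group algebras), the injective hull $I(\Ima(d_1))$ embeds into $P_0$ and splits off as a direct summand, so that $P_0 = I(\Ima(d_1)) \oplus P_0''$ with $P_0''$ projective. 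Quotienting by $\Ima(d_1)$ then identifies $P_0/\Ima(d_1) \cong_{\proj} I(\Ima(d_1))/\Ima(d_1) = \Omega^{-1}\Ima(d_1)$, and assembling everything yields
\[
L \cong_{\proj} K_0 \oplus \Omega^{-1}\Ima(d_1).
\]

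Substituting the inductive formula for $\Ima(d_1)$ and using that $\Omega^{-1}$ commutes with finite direct sums (since $I(A \oplus B) = I(A) \oplus I(B)$) together with $\Omega^{-1}\Omega^{-i} \cong \Omega^{-(i+1)}$ (noted in the excerpt) would finish the proof. I expect the main obstacle to be the identification $P_0/\Ima(d_1) \cong_{\proj} \Omega^{-1}\Ima(d_1)$: this is the step where the separation hypothesis is used in an essential way, and it requires exploiting the self-injectivity of $kG$ to peel off $I(\Ima(d_1))$ as a direct summand of the projective module $P_0$ that houses the image.
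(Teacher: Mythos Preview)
Your proposal is correct and follows the standard inductive argument that the paper is implicitly invoking (the paper does not give its own proof here, merely citing \cite[Proposition~3.3]{scyclic}). One small simplification: the paragraph after Definition~\ref{def:complex} already tells you that $P_0$ may be taken to be the injective hull $I(\Ima(d_1))$ itself, so the extra splitting $P_0 = I(\Ima(d_1)) \oplus P_0''$ is unnecessary and you obtain $P_0/\Ima(d_1) = \Omega^{-1}\Ima(d_1)$ on the nose.
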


Let $V$ be a $kG$-module, finite-dimensional
as a $k$-vector space, and $W$ a submodule of $V$. We write
$S = S(V) = \bigoplus_{r=0}^\infty S^r(V)$ for the symmetric algebra
on $V$ and $\Lambda(W) = \bigoplus_{r=0}^\infty \Lambda^r(W)$ for
the exterior algebra on $W$. For $r<0$ let $S^r(V)$ denote the $0$
module.

\begin{definition} \label{def:koszul}
Let $W$ be a submodule of a $kG$-module $V$ and let $W^p$ denote the
$kG$-submodule of $S^p(V)$ spanned by the $p$-th powers of elements of
$W$. Let $K(V, W^p)$ denote the Koszul complex of graded $kG$-modules: 
\[
\dots \stackrel{d}{\longrightarrow} S(V) \otimes \Lambda^3(W^p)
\stackrel{d}{\longrightarrow} S(V) \otimes \Lambda^2(W^p)
\stackrel{d}{\longrightarrow} S(V) \otimes W^p
\stackrel{d}{\longrightarrow} S(V),
\]
where $d(f \otimes w_1^p \wedge w_2^p \wedge \dots \wedge w_i^p) =
\sum_{j=1}^i (-1)^{j+1} fw_j^p \otimes w_1^p \wedge \dots \wedge \widehat{w_j^p} \wedge
\dots \wedge w_i^p$ for $w_j \in W$ and $f \in S(V)$.
We write $K^r(V, W^p)$ when we consider the complex $K(V, W^p)$ in
grading-degree~$r$.
\end{definition}

If $k=\mathbb F_2$ then the squaring map gives an isomorphism between
$W$ and $W^2$, so we can regard $W^2$ as a copy of $W$ in degree 2
equipped with a squaring map into $S^2(V)$. From this point of view,
the boundary map is given by 
 $d(f \otimes w_1 \wedge w_2 \wedge \dots \wedge w_i) = \sum_{j=1}^i
  fw_j^2 \otimes w_1 \wedge \dots \wedge \widehat{w_j} \wedge
\dots \wedge w_i$ for $w_j \in W$ and $f \in S(V)$.

We will normally take the second point of view, so we will assume that
$k=\mathbb F_2$ in a large part of this paper. Since any
$kC_{2^n}$-module can be written in $\mathbb F_2$, this is not a
significant restriction. 

\begin{lemma} \label{la:KVWhom}
In the context of
Definition~\ref{def:koszul}, the complex $K(V, W^p)$ is acyclic and
its homology in degree~0 is $S(V)/(W^p)$, where $(W^p)$ is the ideal
generated by all elements $w^p$, $w \in W$.
\end{lemma}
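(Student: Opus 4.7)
The plan is to pick a basis compatible with the inclusion $W \subseteq V$ and then recognise $K(V, W^p)$ as a classical Koszul complex on a regular sequence in the polynomial ring $S(V)$, to which the standard acyclicity theorem applies. The fact that everything carries a compatible $kG$-action is automatic, since $W$ is a $kG$-submodule, the $p$-th power map $W \to W^p$ is $G$-equivariant, and the differential is intrinsically defined; the basis is only an auxiliary device for the homological check.

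First I would choose a $k$-basis $w_1,\ldots,w_m$ of $W$ and extend it to a basis $w_1,\ldots,w_m,v_{m+1},\ldots,v_n$ of $V$, so that $S(V) = k[w_1,\ldots,w_m,v_{m+1},\ldots,v_n]$ as a polynomial ring. Since $S(V)$ is commutative of characteristic $p$, the map $u \mapsto u^p$ on $V$ satisfies $(\lambda u + u')^p = \lambda^p u^p + (u')^p$, so $W^p$ is the $k$-span of $w_1^p,\ldots,w_m^p$. These are monomials in distinct polynomial variables, hence $k$-linearly independent in $S(V)$, and therefore form a basis of $W^p$. Unwinding Definition~\ref{def:koszul}, the elements $w_{j_1}^p \wedge \cdots \wedge w_{j_i}^p$ for $j_1<\cdots<j_i$ give a free $S(V)$-basis of $S(V)\otimes \Lambda^i(W^p)$, and the differential $d$ agrees on the nose with the ordinary Koszul differential associated to the sequence $(w_1^p,\ldots,w_m^p)$ in $S(V)$.

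Next I would verify that $w_1^p,\ldots,w_m^p$ is a regular sequence in $S(V)$. This is straightforward: for each $i$, the quotient $S(V)/(w_1^p,\ldots,w_i^p)$ is a free $k[v_{m+1},\ldots,v_n]$-module with basis the monomials $w_1^{a_1}\cdots w_m^{a_m}$ where $0\le a_j<p$ for $j\le i$ and $a_j\ge 0$ for $j>i$, and multiplication by $w_{i+1}^p$ is visibly injective on this free module.

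Finally I would invoke the classical theorem that the Koszul complex on a regular sequence $f_1,\ldots,f_m$ in a commutative ring $R$ is a free resolution of $R/(f_1,\ldots,f_m)$ (e.g.\ Matsumura, \emph{Commutative Ring Theory}, Theorem~16.5). Applied here, this yields both acyclicity of $K(V,W^p)$ and the identification $H_0(K(V,W^p)) = S(V)/(w_1^p,\ldots,w_m^p) = S(V)/(W^p)$. The only step with any substance is the linear independence / regular sequence check in the middle paragraph, and that is essentially automatic because the $w_i^p$ live in disjoint polynomial generators; there is no real obstacle.
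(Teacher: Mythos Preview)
Your proof is correct and follows exactly the same approach as the paper's: choose a basis $\{w_1,\ldots,w_m\}$ of $W$, observe that $\{w_1^p,\ldots,w_m^p\}$ spans $W^p$ and is a regular sequence in $S(V)$, and invoke the standard acyclicity theorem for the Koszul complex on a regular sequence. The paper's proof is a two-sentence version of your argument with the regularity check left implicit.
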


\begin{proof}
If $\{ w_1, \ldots , w_r \}$ is a basis for $W$ then 
$\{ w_1^p, \ldots, w_r^p \}$ is a regular sequence in $S(V)$ and it
spans $W^p$. This is now a standard result about Koszul complexes. 
\end{proof}

\begin{lemma} \label{la:dirsum}
Let $V$, $V'$ be $kG$-modules, finite-dimensional as $k$-vector spaces
and let $W$, $W'$ be submodules of $V$ and $V'$, respectively. The
complex $K(V \oplus V', (W \oplus W')^p)$ is isomorphic to the
(total) tensor product $K(V, W^p) \otimes K(V', W'^p)$ as a
complex of graded $kG$-modules.
\end{lemma}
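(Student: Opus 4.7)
The plan is to reduce the statement to three standard compatibilities and then match differentials by bookkeeping. First I would establish the module-level isomorphism in each bidegree. Since $S$ is the symmetric algebra, we have a natural isomorphism $S(V \oplus V') \cong S(V) \otimes S(V')$ of graded $kG$-algebras. In characteristic $p$ the Frobenius is additive, so $(w+w')^p = w^p + w'^p$ for $w \in W$, $w' \in W'$, and therefore $(W \oplus W')^p = W^p \oplus W'^p$ as $kG$-submodules of $S^p(V \oplus V')$, the sum being direct because the two summands sit in $S^p(V) \otimes 1$ and $1 \otimes S^p(V')$ respectively.

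Next I would combine this with the standard isomorphism $\Lambda^i(A \oplus B) \cong \bigoplus_{a+b=i} \Lambda^a(A) \otimes \Lambda^b(B)$ for $A=W^p$, $B=W'^p$, to obtain
\[
S(V \oplus V') \otimes \Lambda^i\bigl((W \oplus W')^p\bigr) \;\cong\; \bigoplus_{a+b=i} \bigl(S(V) \otimes \Lambda^a(W^p)\bigr) \otimes \bigl(S(V') \otimes \Lambda^b(W'^p)\bigr),
\]
which identifies the underlying graded $kG$-module in homological degree $i$ of the left hand complex with that of the total tensor product complex.

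Finally I would check that the differentials agree. Fix ordered bases $\{w_1,\dots,w_r\}$ of $W$ and $\{w'_1,\dots,w'_s\}$ of $W'$ and use the concatenation as an ordered basis for $W \oplus W'$. A typical generator of the left hand side in bidegree $i = a+b$ can be written $(f\cdot f') \otimes w_{i_1}^p \wedge \cdots \wedge w_{i_a}^p \wedge {w'_{j_1}}^p \wedge \cdots \wedge {w'_{j_b}}^p$ with $f \in S(V)$, $f' \in S(V')$. Splitting the Koszul sum from Definition~\ref{def:koszul} according to whether the removed factor lies in the $w$-block or the $w'$-block gives exactly
\[
(d f)\cdot f' \;+\; (-1)^a\, f \cdot (d f'),
\]
after collecting the signs $(-1)^{j+1}$ produced by moving a factor past its predecessors; the extra $(-1)^a$ in the second sum arises precisely because every omitted ${w'_j}^p$ has $a$ $W^p$-factors to its left. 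This is the sign convention for the total differential on a tensor product of complexes, so the differentials on the two complexes are identified by the bidegree-wise isomorphism above.

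The only non-routine point is this sign bookkeeping together with the observation that $(W \oplus W')^p$ genuinely decomposes as $W^p \oplus W'^p$; both ingredients are special to characteristic $p$ (via Frobenius) and to the Koszul sign convention. Everything else is standard multilinear algebra, so the lemma follows.
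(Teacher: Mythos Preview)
Your proof is correct and complete: you have correctly identified the three ingredients (the multiplicative decomposition of the symmetric algebra, the Frobenius identity $(W\oplus W')^p=W^p\oplus W'^p$, and the exterior-algebra decomposition) and your sign bookkeeping for the total differential is accurate. The paper itself does not give a proof but simply refers to \cite[Lemma~3.8]{scyclic} as analogous, so you have in fact supplied the standard argument that the reference presumably contains.
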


\begin{proof}
This is analogous to~\cite[Lemma~3.8]{scyclic}.
\end{proof}

We also need to deal with tensor induction of graded modules and
complexes.

\begin{lemma} \label{la:tensind}
Let $H$ be a subgroup of $G$ and let $V, W$ be
$kH$-modules. Then $S(V \hspace{-0.1cm} \uparrow^G_H)
\cong S(V) \hspace{-0.1cm} \uparrow^{\otimes G}_H$,
$\Lambda(V \hspace{-0.1cm} \uparrow^G_H) \cong
\Lambda(V) \hspace{-0.1cm} \uparrow^{\otimes G}_H$ as graded
$kG$-modules, and if the characteristic of $k$ is 2 
then $K(V \hspace{-0.1cm} \uparrow^G_H, (W \hspace{-0.1cm}
\uparrow^G_H)^2) \cong K(V,W^2) \hspace{-0.1cm} \uparrow^{\otimes
  G}_H$ as complexes of graded $kG$-modules. 
\end{lemma}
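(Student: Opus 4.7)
The plan is to fix a left transversal $g_1, \dots, g_n$ for $H$ in $G$ (with $g_1 = 1$) and write $V \uparrow^G_H = \bigoplus_{i=1}^n g_i \otimes V$ as a $k$-vector space. The $G$-action permutes these summands in the standard way: if $g g_i = g_{\sigma(i)} h_i$ with $\sigma \in \Sigma_n$ and $h_i \in H$, then $g(g_i \otimes v) = g_{\sigma(i)} \otimes h_i v$. Tensor induction $M \uparrow^{\otimes G}_H$ of a $kH$-module $M$ is defined by the same recipe applied to an $n$-fold tensor product of copies of $M$ rather than a direct sum.

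For the symmetric and exterior algebra statements I would apply the natural isomorphisms $S(A \oplus B) \cong S(A) \otimes S(B)$ and $\Lambda(A \oplus B) \cong \Lambda(A) \otimes \Lambda(B)$ iteratively to obtain
\[
S(V \uparrow^G_H) \cong \bigotimes_{i=1}^n S(g_i \otimes V), \qquad \Lambda(V \uparrow^G_H) \cong \bigotimes_{i=1}^n \Lambda(g_i \otimes V),
\]
and then identify $S(g_i \otimes V)$ (respectively $\Lambda(g_i \otimes V)$) with the $i$-th tensor factor of $S(V) \uparrow^{\otimes G}_H$ (respectively $\Lambda(V) \uparrow^{\otimes G}_H$). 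These natural isomorphisms preserve the internal grading, so the result is an isomorphism of graded modules; the $G$-action on the left matches the tensor-induction action on the right because an element $g$ sends the $i$-th summand to the $\sigma(i)$-th summand and applies $h_i$ to the underlying copy of $V$, which is precisely how tensor induction is defined on basic tensors.

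For the Koszul complex claim I would instead invoke Lemma~\ref{la:dirsum}, which supplies the analogous decomposition
\[
K\Bigl( \bigoplus_i g_i \otimes V,\ \bigl( \bigoplus_i g_i \otimes W \bigr)^2 \Bigr) \cong \bigotimes_{i=1}^n K(g_i \otimes V,\, (g_i \otimes W)^2)
\]
as complexes of graded $kG$-modules. Identifying the $i$-th factor on the right with a copy of $K(V,W^2)$, the same $G$-equivariance check as before turns this tensor product of complexes into $K(V,W^2) \uparrow^{\otimes G}_H$, now as a complex of graded $kG$-modules.

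The only genuine step is the bookkeeping required for the $G$-equivariance of each identification: one must check that permuting the direct summands of $V \uparrow^G_H$ by $\sigma$ induces exactly the corresponding permutation of tensor factors in $S(V)\uparrow^{\otimes G}_H$, $\Lambda(V)\uparrow^{\otimes G}_H$ or $K(V,W^2)\uparrow^{\otimes G}_H$, twisted on each factor by $h_i$. This is the main obstacle, and it is largely a matter of writing out definitions. Restricting to characteristic $2$ for the Koszul statement eliminates any sign complications that would otherwise arise from permutations in $\Lambda$ or in the Koszul differential.
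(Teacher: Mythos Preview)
Your proposal is correct and follows essentially the same approach as the paper: choose coset representatives, decompose the induced module as a direct sum of copies $g_i \otimes V$, and then apply the standard sum-to-tensor-product identities for $S$, $\Lambda$, and (via Lemma~\ref{la:dirsum}) the Koszul complex, checking that the permutation action of $G$ on the summands matches the tensor-induction action on the factors. The paper's proof is terser than yours but makes exactly the same moves.
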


Without the restriction on the characteristic of $k$ we would have to
deal with the sign convention that appears in the definition of the
action of $G$ on the tensor-induced complex. 

\begin{proof}
Let $\{ g_i \}$ be a set of coset representatives for $G/H$ and write
$V \hspace{-0.1cm} \uparrow ^G_H = \oplus g_i \otimes V$. The formulas
now follow from the usual formulas for $S$ and $\Lambda$ of a sum and
the definition of the group action on a tensor induced module. 
\end{proof}

%%%%%%%%%%%%%%%%%%%%%%%%%%%%%%%%%%%%%%%%%%%%%%%%%%%%%%%%%%%%%%%%%%%%%%%%%%%%%%%%

\section{Modules for Cyclic 2-Groups}
\label{sec:cyc2}

From now on, let $G = \langle g \rangle \cong C_{2^n}$ be a cyclic
group of order $2^n$, $n \ge 1$, and $k$ a field of characteristic~$2$.
We write $a(G)$ for the Green ring of $kG$-modules. Up to isomorphism,
there are~$2^n$ indecomposable $kG$-modules $V_1$, $V_2$, \dots,
$V_{2^n}$ and we choose the notation so that $\dim_k(V_i) = i$. For
convenience we write~$V_0$ for the $0$ module. The generator $g \in G$
acts on~$V_i$ with matrix a Jordan block with ones on the diagonal.
Choose a $k$-basis $\{x_1, x_2, \dots, x_n\}$ of~$V_n$ such that
$g x_i = x_i + x_{i-1}$ for all $2 \le i \le n$ and $g x_1 =
x_1$. Each element of $S(V_i)$ can be written uniquely as a polynomial
in $x_1$, \dots, $x_i$, and for $j \le i$, we can identify $V_j$
with the $kG$-submodule of $V_i$ spanned by $x_1, x_2, \dots, x_j$. Each
$V_i$ is uniserial with composition series
$0 < V_1 < V_2 < \dots < V_{i-1} < V_i$. Note that for $i \le 2^{n-1}$
the kernel of~$V_i$ is nontrivial and so~$V_i$ can be identified with
the $i$-dimensional indecomposable module for the quotient group~$C_{2^{n-1}}$.

Decompositions of tensor products into indecomposables have been studied
by several authors, see for
example~\cite{barry1,barry2,green,hou,norman1,norman2,renaud,srinivasan}. In
our case, this decomposition can easily be computed using the Heller
translate $\Omega$. We write $\Omega_{2^n}$ instead of $\Omega$ when
we want to emphasize that we are working with modules for the group
$C_{2^n}$. It is easy to check that $\Omega _{2^n} V_i \cong _{\proj}
V_{2^n-i}$ for $0 \leq i \leq 2^n$, where $\proj$ means modulo
projective modules for $C_{2^n}$. 

Recall that
$\Omega_{2^n}(V \otimes V') \cong _{\proj} (\Omega_{2^n} V) \otimes
V'$, where the projective part can be determined by comparing
dimensions. For cyclic groups, $\Omega_{2^n}^2 \cong _{\proj} \Id$ and
$\Omega_{2^n}V \otimes \Omega_{2^n}V' \cong _{\proj} V \otimes
V'$. This provides an easy recursive method for calculating the 
decomposition of tensor products in the case of cyclic 2-groups.

In order to calculate $V_a \otimes V_b$, we may assume $a \geq b$ and
write $a= 2^r-a'$ for the smallest possible $r$ such that $a' \geq
0$. Then $V_a \otimes V_b \cong (\Omega_{2^r} V_{a'}) \otimes V_b
\cong \Omega _{2^r} (V_{a'} \otimes V_b)$ modulo copies
of~$V_{2^r}$. If $b \geq 2^{r-1}$ then it is more efficient to write 
$b=2^r-b'$ too. 

\begin{example}
For $V_{9}, V_{13} \in a(C_{16})$ we have:
$V_{9} \otimes V_{13} \cong (\Omega_{16} V_7) \otimes (\Omega_{16}
V_3) \, \cong \, (V_7 \otimes V_3) \oplus$ copies of~$V_{16}$. By
comparing dimensions we get $V_{9} \otimes V_{13} \cong (V_7 \otimes
V_3) \oplus 6 V_{16}$. Now we consider the non-faithful module $V_7
\otimes V_3$ as a module for the factor group $\cong C_8$ and get $V_7
\otimes V_3 \cong \Omega_8(V_1 \otimes V_3) \oplus$ copies of
$V_8$. Again by comparing dimensions we obtain $V_7 \otimes V_3 \cong
\Omega_8(V_1 \otimes V_3) \oplus 2 V_8 \cong V_5 \oplus 2 V_8$ and
hence $V_{9} \otimes V_{13} \cong V_5 \oplus 2 V_8 \oplus 6 V_{16}$.
\end{example}

%% \begin{example}
%% For $V_{9}, V_{13} \in a(C_{16})$ we have:
%% \begin{eqnarray*}
%% V_{9} \otimes V_{13} & \cong & (\Omega_{16} V_7) \otimes (\Omega_{16}
%% V_3) \, \cong \, (V_7 \otimes V_3) \oplus 6 V_{16} \, \cong \, ((\Omega _8 V_1) \otimes V_3) \oplus 6 V_{16}
%% \\ & \cong & \Omega_8(V_1 \otimes V_3) \oplus 2 V_8 \oplus 6 V_{16} \, \cong \,
%%   (\Omega_8 V_3) \oplus 2 V_8 \oplus 6 V_{16}
%% \\ & \cong  & V_5 \oplus 2 V_8 \oplus 6 V_{16}.
%% \end{eqnarray*}
%% \end{example}

Let $H = \langle g^2 \rangle$ be the unique maximal subgroup of
$G$. For $1 \le j \le 2^{n-1}$ we also denote the indecomposable
$kH$-module of dimension $j$ by $V_j$. Of course, this is an
abuse of notation, but we will always make it clear whether we consider
$V_j$ as a $kG$-module or as a $kH$-module. An elementary calculation
with Jordan canonical forms shows that the restriction operator
$\downarrow^G_H: a(G) \rightarrow a(H)$ is given by
$V_i \hspace{-0.1cm} \downarrow^G_H = V_{i'} \oplus V_{i''}$ where
$V_{i'}$ is the $kH$-submodule generated by
$\{x_i, x_{i-2}, x_{i-4}, \dots\}$ and
$V_{i''}$ is the $kH$-submodule generated by $\{x_{i-1}, x_{i-3},
x_{i-5}, \dots\}$. In particular, we have $(i', i'') = (\frac{i+1}{2},
\frac{i-1}{2})$ if $i$ is odd, and $(i', i'') = (\frac{i}{2},
\frac{i}{2})$ if $i$ is even. The induction operator
$\uparrow^G_H: a(H) \rightarrow a(G)$ is given
by $V_j \hspace{-0.1cm} \uparrow^G_H = V_{2j}$ for $1 \le j \le 2^{n-1}$.

We say that a $kG$-module is induced if it is induced from proper
subgroups. Let $a_P(G)$ be the submodule of $a(G)$ generated by the
projective modules and $a_I(G)$ the submodule generated by the induced
modules. Notice that $a_P(G)$ and $a_I(G)$ are ideals of $a(G)$ and
that induction maps $a_P(H)$ into $a_P(G)$ and $a_I(H)$ into $a_I(G)$,
but restriction only maps $a_P(G)$ into $a_P(H)$.

The following lemmas deduce information on $kG$-modules and short
exact sequences of $kG$-modules from their restriction to $H$.

\begin{lemma} \label{la:resind}
Let $A$ be a $kG$-module such that $A \hspace{-0.1cm} \downarrow^G_H$
is induced from a proper subgroup of~$H$. Then $A$ is induced from $H$.
\end{lemma}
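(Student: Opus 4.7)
The plan is to exploit the Krull--Schmidt theorem together with the explicit description of the restriction operator $\downarrow^G_H$ on indecomposables given just above the lemma, reducing everything to a parity argument on indices.

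First I would characterize the modules of interest on the two sides. Since $G$ and $H$ are cyclic $2$-groups, the indecomposable $kG$-modules are $V_1,\dots,V_{2^n}$ and the indecomposable $kH$-modules are $V_1,\dots,V_{2^{n-1}}$. Using $V_j \hspace{-0.1cm}\uparrow^G_H = V_{2j}$ (and, applied inside $H$, $V_j \hspace{-0.1cm}\uparrow^H_{\langle g^4\rangle} = V_{2j}$), every $V_{2j}$ is induced from a proper subgroup of $H$. Conversely, if $V_j$ is induced from a proper subgroup $K \lvertneqq H$, then because $K$ is normal in the cyclic group $H$, the restriction of the induced module back to $K$ is $[H{:}K]$ copies of a $K$-module, forcing $[H{:}K] \mid \dim V_j = j$; since $[H{:}K]$ is a non-trivial power of $2$, this forces $j$ to be even. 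Thus a $kH$-module is induced from a proper subgroup of $H$ if and only if, in its Krull--Schmidt decomposition, every indecomposable summand $V_j$ has $j$ even.

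Next I would apply this to $A$. Writing $A \cong \bigoplus_k V_{i_k}$ by Krull--Schmidt and using additivity of restriction, one has $A\hspace{-0.1cm}\downarrow^G_H \cong \bigoplus_k V_{i_k}\hspace{-0.1cm}\downarrow^G_H$. By the restriction formula recalled in the excerpt,
\[
V_i\hspace{-0.1cm}\downarrow^G_H \;\cong\; \begin{cases} V_{i/2}\oplus V_{i/2} & i \text{ even},\\ V_{(i+1)/2}\oplus V_{(i-1)/2} & i \text{ odd}. \end{cases}
\]
If some $i_k$ were odd, then $(i_k+1)/2$ and $(i_k-1)/2$ are consecutive integers, so one of them is odd; hence $A\hspace{-0.1cm}\downarrow^G_H$ would contain an indecomposable summand $V_j$ with $j$ odd, contradicting the hypothesis together with the characterization in the previous step. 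Therefore every $i_k$ is even, so each summand $V_{i_k} = V_{i_k/2}\hspace{-0.1cm}\uparrow^G_H$ is induced from $H$, and hence so is $A$.

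The only subtle point is the characterization of $kH$-modules induced from a proper subgroup; everything else is bookkeeping with Krull--Schmidt and the explicit restriction formula. I do not anticipate a real obstacle, since once the parity dictionary ``induced from below $H$'' $\Leftrightarrow$ ``all indices even'' is in hand, the argument is immediate from the fact that restricting an odd-index indecomposable always produces at least one odd-index summand.
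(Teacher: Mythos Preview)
Your proof is correct and follows essentially the same approach as the paper: reduce to indecomposables and use the parity criterion ``induced from a proper subgroup $\Leftrightarrow$ all indecomposable summands have even dimension''. The paper's version is just terser, arguing directly via total dimension (each summand of $A\downarrow^G_H$ has even dimension, hence $\dim A$ is even, hence $A$ is induced) rather than invoking the explicit restriction formula to locate an odd-index summand.
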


\begin{proof}
We can assume that $A$ is indecomposable. Since
$A \hspace{-0.1cm} \downarrow^G_H$ is induced, each indecomposable
direct summand of $A \hspace{-0.1cm} \downarrow^G_H$ has even
dimension. Thus $\dim_k(A)$ is even and so $A$ is induced.
\end{proof}

As in the introduction we write $\cong_{\ind}$ and $\cong_{\proj}$
for isomorphisms modulo induced and modulo projective summands,
respectively. 

\begin{lemma} \label{la:ind}
Let $A$, $B$ be induced $kG$-modules such that $A \hspace{-0.1cm}
\downarrow^G_H \cong_{\proj} B \hspace{-0.1cm} \downarrow^G_H$. Then
$A \cong_{\proj} B$.
\end{lemma}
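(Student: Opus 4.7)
The plan is to reduce to an identity in the Green ring $a(H)$, cancel a factor of $2$ there, and then push the result forward by induction.

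First, since $A$ and $B$ are induced from proper subgroups of the cyclic group $G=C_{2^n}$, and every proper subgroup of $G$ is contained in the unique maximal subgroup $H$, transitivity of induction lets me write $A \cong A' \hspace{-0.1cm}\uparrow^G_H$ and $B \cong B' \hspace{-0.1cm}\uparrow^G_H$ for some $kH$-modules $A'$, $B'$. From the explicit formula $V_j\hspace{-0.1cm}\uparrow^G_H = V_{2j}$ recalled just before the lemma, together with $V_{2j}\hspace{-0.1cm}\downarrow^G_H = V_j \oplus V_j$, I obtain $A\hspace{-0.1cm}\downarrow^G_H \cong A' \oplus A'$ and $B\hspace{-0.1cm}\downarrow^G_H \cong B' \oplus B'$. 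The hypothesis therefore becomes $A' \oplus A' \cong_{\proj} B' \oplus B'$ in $kH$-modules.

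Next I translate this into the Green ring $a(H)$, which, being a cyclic $2$-group, has as a $\Z$-basis the indecomposables $V_1,\dots,V_{2^{n-1}}$, exactly one of which (namely $V_{2^{n-1}}$) is projective. Consequently $a_P(H) \cong \Z$ is a direct summand of $a(H)$, so $a(H)/a_P(H)$ is torsion-free. The hypothesis gives $2([A']-[B']) \in a_P(H)$, and torsion-freeness of the quotient yields $[A']-[B'] \in a_P(H)$. Comparing multiplicities of indecomposables, this means $A' \cong_{\proj} B'$ as $kH$-modules.

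Finally, induction sends $a_P(H)$ into $a_P(G)$, since $V_{2^{n-1}}\hspace{-0.1cm}\uparrow^G_H = V_{2^n}$ is the projective indecomposable for $G$. Applying $\uparrow^G_H$ to the congruence $A' \cong_{\proj} B'$ therefore gives $A \cong_{\proj} B$, as required.

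The only delicate point is the cancellation step, but it is entirely formal: the Krull--Schmidt structure of $a(H)$ as a free abelian group on the indecomposables, with the projective indecomposables spanning a pure subgroup, makes the division by $2$ automatic. No module-theoretic input beyond this is needed.
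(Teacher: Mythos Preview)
Your proof is correct and takes essentially the same approach as the paper: both arguments exploit that restriction followed by induction (or equivalently, induction followed by restriction) doubles an induced module, then cancel the factor of $2$ using the torsion-freeness of the Green ring modulo projectives. The only cosmetic difference is that the paper induces the hypothesis to obtain $2A \cong_{\proj} 2B$ in $a(G)$ and cancels there, whereas you restrict first to get $2A' \cong_{\proj} 2B'$ in $a(H)$, cancel, and then induce; these are two sides of the same Mackey identity.
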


\begin{proof}
Since $A$ is induced,
$A \hspace{-0.1cm} \downarrow^G_H \uparrow ^G_H \cong 2A$; the same is
true for $B$. Inducing a projective yields a projective, so we obtain
$2A \cong_{\proj} 2B$ and the claim follows.
\end{proof}

\begin{lemma} \label{la:modind}
Let $A$ and $B$ be $kG$-modules such that $A \cong_{\ind} B$ and
$A \hspace{-0.1cm} \downarrow^G_H \, \cong_{\proj} B \hspace{-0.1cm}
\downarrow^G_H$. Then $A \cong_{\proj} B$.
\end{lemma}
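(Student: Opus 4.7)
The plan is to split $A$ and $B$ into their non-induced and induced parts, observing that $\cong_{\ind}$ controls the non-induced parts exactly and that Lemma~\ref{la:ind} then handles the induced parts via their restrictions to $H$.

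First I would note that, since $G=C_{2^n}$ has $H$ as its unique maximal subgroup, every proper subgroup of $G$ lies in $H$, so induced from a proper subgroup is equivalent to induced from $H$. Among the indecomposables, $V_i$ is induced (namely $V_{i/2}\hspace{-0.1cm}\uparrow^G_H$) precisely when $i$ is even, so the non-induced indecomposables are exactly the $V_i$ with $i$ odd. Krull--Schmidt then lets me decompose
\[
A \cong A_0 \oplus A_1, \qquad B \cong B_0 \oplus B_1,
\]
where $A_0, B_0$ are sums of odd-dimensional indecomposables and $A_1, B_1$ are induced from $H$.

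Next I would use the assumption $A \cong_{\ind} B$. Since $a(G)/a_I(G)$ is the free abelian group on the classes of odd-dimensional indecomposables, this hypothesis is equivalent to $A_0 \cong B_0$. Restricting to $H$ and combining with $A\hspace{-0.1cm}\downarrow^G_H \cong_{\proj} B\hspace{-0.1cm}\downarrow^G_H$ gives
\[
A_0\hspace{-0.1cm}\downarrow^G_H \oplus A_1\hspace{-0.1cm}\downarrow^G_H \;\cong_{\proj}\; B_0\hspace{-0.1cm}\downarrow^G_H \oplus B_1\hspace{-0.1cm}\downarrow^G_H,
\]
and cancelling the isomorphic summands $A_0\hspace{-0.1cm}\downarrow^G_H \cong B_0\hspace{-0.1cm}\downarrow^G_H$ by Krull--Schmidt leaves $A_1\hspace{-0.1cm}\downarrow^G_H \cong_{\proj} B_1\hspace{-0.1cm}\downarrow^G_H$.

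Finally, since $A_1$ and $B_1$ are both induced from $H$, Lemma~\ref{la:ind} applies and yields $A_1 \cong_{\proj} B_1$. Combined with $A_0 \cong B_0$ this gives $A \cong_{\proj} B$, as required. There is no real obstacle here; the only point to be careful about is the correct interpretation of $\cong_{\ind}$ in terms of the Krull--Schmidt decomposition, which is what makes the hypothesis $A_0 \cong B_0$ a genuine module isomorphism rather than just an equality in the Green ring modulo induced summands.
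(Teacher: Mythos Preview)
Your proof is correct and follows essentially the same route as the paper's: both isolate the induced parts, use the restriction hypothesis together with Krull--Schmidt cancellation to compare them, and then invoke Lemma~\ref{la:ind}. The only cosmetic difference is that the paper works directly from the definition $A \oplus X \cong B \oplus Y$ with $X,Y$ induced (then cancels the non-projective summands of $X$ and $Y$ at the end), whereas you first split $A$ and $B$ internally into non-induced and induced parts; the underlying argument is the same.
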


\begin{proof}
We have $A \oplus X \cong B \oplus Y$ for some induced modules $X$ and
$Y$. On restriction, we obtain $X \hspace{-0.1cm} \downarrow^G_H
\cong_{\proj} Y \hspace{-0.1cm} \downarrow^G_H$ and so $X
\cong_{\proj} Y$ by Lemma~\ref{la:ind}. Now cancel the non-projective
summands of $X$ and $Y$ in the original formula.
\end{proof}

\begin{lemma}
\label{la:looks_sep}
Let $0 \rightarrow A \rightarrow B \rightarrow C \rightarrow 0$ be a
short exact sequence of $kG$-modules that is separated at $B$ on restriction
to $H$ and such that $C \cong _{\ind} B \oplus \Omega_{2^n}^{-1}A$ as
$kG$-modules. Then the sequence is separated at $B$ (as a sequence of
$kG$-modules).
\end{lemma}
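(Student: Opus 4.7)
The plan is to exhibit the injective hull $I(A)$ as a projective direct summand of $B$ containing $A$, proceeding in three stages.

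First, I would strengthen the hypothesis $C \cong_{\ind} B \oplus \Omega_{2^n}^{-1}A$ to an isomorphism modulo projectives. View the short exact sequence as a two-term acyclic complex $A \to B$ with $H_0 = C$; separation at $K_1 = A$ is automatic since $K_2 = 0$, so the given separation on restriction makes the complex separated over $H$. Lemma~\ref{la:homol1} applied over $H$ then yields $C\downarrow^G_H \cong_{\proj} B\downarrow^G_H \oplus \Omega_H^{-1}(A\downarrow^G_H)$. Since the defining sequence $0 \to A \to I(A) \to \Omega_{2^n}^{-1}A \to 0$ restricts to one realising $(\Omega_{2^n}^{-1}A)\downarrow^G_H$ as the cokernel of $A\downarrow^G_H$ in a projective $kH$-module, we have $\Omega_H^{-1}(A\downarrow^G_H) \cong_{\proj} (\Omega_{2^n}^{-1}A)\downarrow^G_H$, and so $C\downarrow^G_H \cong_{\proj} (B \oplus \Omega_{2^n}^{-1}A)\downarrow^G_H$. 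Combined with the hypothesis and Lemma~\ref{la:modind}, this upgrades to $C \cong_{\proj} B \oplus \Omega_{2^n}^{-1}A$ as $kG$-modules.

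Second, I would exhibit a candidate summand. Krull--Schmidt applied to the isomorphism above gives a decomposition $C = B' \oplus \Omega_{2^n}^{-1}A \oplus P_C$ with $B'$ the non-projective part of $B$ and $P_C$ projective. Let $B_1 \subseteq B$ be the preimage under $\pi : B \twoheadrightarrow C$ of the summand $\Omega_{2^n}^{-1}A$; it sits in a short exact sequence $0 \to A \to B_1 \to \Omega_{2^n}^{-1}A \to 0$ and has dimension $\dim I(A)$. The crux is to prove that $B_1$ is a projective $kG$-module. Because every projective $kC_{2^n}$-module is a sum of copies of $V_{2^n}$ and a $kC_{2^n}$-module is projective if and only if its restriction to $H$ is (as one checks using $V_i\downarrow^G_H = V_{i'} \oplus V_{i''}$), it suffices to show $B_1\downarrow^G_H$ is projective over $kH$. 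Using the separation decomposition $B\downarrow^G_H = Q \oplus B_0$ with $A\downarrow^G_H \subseteq Q \cong I_H(A\downarrow^G_H) \oplus P_1$, one traces that the preimage in $B\downarrow^G_H$ of the $\Omega_H^{-1}(A\downarrow^G_H)$-summand of $C\downarrow^G_H = \Omega_H^{-1}(A\downarrow^G_H) \oplus P_1 \oplus B_0$ is $I_H(A\downarrow^G_H) \subseteq Q$, which is projective. Krull--Schmidt uniqueness on $C\downarrow^G_H$ identifies the non-projective part of $(\Omega_{2^n}^{-1}A)\downarrow^G_H$ with this $\Omega_H^{-1}(A\downarrow^G_H)$-summand up to a projective correction, and absorbing this correction shows $B_1\downarrow^G_H$ is projective. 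This reconciliation of the $kG$-level decomposition of $C$ with the $kH$-level one coming from the separation is the principal obstacle.

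Finally, $B_1$ is a projective $kG$-module of dimension $\dim I(A)$, and since all projective $kC_{2^n}$-modules of a given dimension are isomorphic, $B_1 \cong I(A)$. Being injective, $B_1 \hookrightarrow B$ splits: $B = B_1 \oplus B''$. With $A \subseteq B_1 \cong I(A)$ a projective direct summand of $B$, the original sequence is separated at $B$, as desired.
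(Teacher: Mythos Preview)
Your first step---upgrading $C \cong_{\ind} B \oplus \Omega_{2^n}^{-1}A$ to $\cong_{\proj}$ via restriction and Lemma~\ref{la:modind}---is correct and is essentially what the paper does (it cites Lemma~\ref{la:ind} directly, which amounts to the same thing).

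The second step, however, has a genuine gap at exactly the point you flag as the ``principal obstacle''. You fix a $kG$-summand $\Omega_{2^n}^{-1}A \subseteq C$, set $B_1 = \pi^{-1}(\Omega_{2^n}^{-1}A)$, and want $B_1\!\downarrow^G_H$ to be $kH$-projective. From separation over $H$ you have $B\!\downarrow = I_H(A\!\downarrow) \oplus B_0$ with $A\!\downarrow$ inside the first summand, and hence a decomposition $C\!\downarrow = \Omega_H^{-1}(A\!\downarrow) \oplus B_0$ in which the preimage of the first summand \emph{is} projective. But the submodule $(\Omega_{2^n}^{-1}A)\!\downarrow \subseteq C\!\downarrow$, coming from the $kG$-level decomposition, has no reason to coincide with or even be contained in that first summand. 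Krull--Schmidt tells you only that the two submodules have isomorphic non-projective parts, not that they agree as submodules of $C\!\downarrow$; and the preimage under $\pi$ of an isomorphic copy of a summand can be quite different from the preimage of the summand itself. The phrase ``absorbing this correction'' does not supply the missing argument, and I do not see how to repair it without essentially reproving the lemma by other means.

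The paper avoids this difficulty entirely by working in the stable category. Once $C \cong_{\proj} B \oplus \Omega_{2^n}^{-1}A$ is established, one feeds the sequence into the long exact sequence for Tate $\Ext$:
\[
\overline{\Hom}_{kG}(A,A) \xrightarrow{d_*} \overline{\Hom}_{kG}(A,B) \xrightarrow{e_*} \overline{\Hom}_{kG}(A,B \oplus \Omega_{2^n}^{-1}A) \to \Ext^1_{kG}(A,A).
\]
Using $\Ext^1_{kG}(A,A) \cong \overline{\Hom}_{kG}(A,\Omega_{2^n}^{-1}A)$, a dimension count forces $e_*$ to be injective, hence $d_*=0$. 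Since $d = d_*(\Id_A)$, the map $A \to B$ factors through a projective. No matching of explicit direct-sum decompositions is required.
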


\begin{proof}
The hypotheses imply that there are induced modules $X$ and $Y$ such
that $C \oplus X \cong B \oplus \Omega_{2^n}^{-1}A \oplus Y$ and also
${C \hspace{-0.1cm} \downarrow^G_H} \cong_{\proj}
{B \hspace{-0.1cm} \downarrow^G_H} \oplus {\Omega_{2^{n-1}}^{-1}A
\hspace{-0.1cm} \downarrow^G_H} \cong_{\proj}
{(B \oplus \Omega_{2^n}^{-1}A) \hspace{-0.1cm} \downarrow^G_H}$ by
Lemma~\ref{la:homol1} applied to $0 \rightarrow {A \hspace{-0.1cm}
\downarrow^G_H} \rightarrow {B \hspace{-0.1cm} \downarrow^G_H} \rightarrow 0$.
It follows that ${X \hspace{-0.1cm} \downarrow^G_H} \cong_{\proj}
{Y \hspace{-0.1cm} \downarrow^G_H}$, hence, by Lemma~\ref{la:ind},
$X \cong_{\proj} Y$ and then $C \cong _{\proj} B \oplus \Omega_{2^n}^{-1}A$.

Thus our short exact sequence is $0 \rightarrow A
\stackrel{d}{\rightarrow} B \stackrel{e}{\rightarrow} B \oplus
\Omega_{2^n}^{-1}A \rightarrow 0$, up to projective summands.
Consider the long exact sequence for Tate $\Ext$:
\begin{equation} \label{eq:longtate}
\cdots \rightarrow \overline{\Hom}_{kG}(A,A) \stackrel{d_*}{\rightarrow}
\overline{\Hom}_{kG}(A,B) \stackrel{e_*}{\rightarrow} \overline{\Hom}_{kG}(A,B \oplus
\Omega_{2^n}^{-1}A) \rightarrow \Ext^1(A,A) \rightarrow \cdots ,
\end{equation}
where $\overline{\Hom}_{kG}$ denotes homomorphisms modulo those that
factorize through a projective.

Since $\Ext^1(A,A) \cong \overline{\Hom}_{kG}(A, \Omega_{2^n}^{-1}A)$
we have 
\begin{eqnarray*}
\dim \Ima(e_*) + \dim \Ext^1(A,A) & \ge &
\dim \overline{\Hom}_{kG}(A,B \oplus \Omega_{2^n}^{-1}A) = \\
&& \dim \overline{\Hom}_{kG}(A,B) + \dim \overline{\Hom}_{kG}(A, \Omega_{2^n}^{-1}A) = \\
&& \dim \overline{\Hom}_{kG}(A,B) + \dim \Ext^1(A,A)
\end{eqnarray*}
and therefore $\dim \Ima(e_*) \ge \dim \overline{\Hom}_{kG}(A,B)$. 
Hence $e_*$ is injective and so $d_*=0$.
But $d=d_*(\Id_A)$, so $d$ factors through a projective, as required.
\end{proof}

The next lemma describes tensor induction from $H$ to~$G$ modulo
induced modules and gives information on the structure of the exterior
algebra $\Lambda(V_{2j})$ as a $kG$-module in terms of the $kH$-module
$\Lambda(V_j)$.

\begin{lemma} \label{la:ten_ind}
Let $r$, $j$ be integers such that $r \ge 0$ and $1 \le j \le
2^{n-1}$. We consider $V_j$ as a $kH$-module and
$V_{2j} = V_j \hspace{-0.1cm} \uparrow^G_H$ as a $kG$-module.

\begin{enumerate}
\item[(a)] Let $A$ and $B$ be $kH$-modules. Then $(A \oplus
  B) \hspace{-0.1cm} \uparrow^{\otimes G}_H \cong A \hspace{-0.1cm}
  \uparrow^{\otimes G}_H \oplus B \hspace{-0.1cm} \uparrow^{\otimes
    G}_H \oplus X$ for some induced $kG$-module $X$.

\item[(b)] There is an induced $kG$-module $X$ such that
  $\Lambda^{2r}(V_{2j}) \cong {\Lambda^r(V_j) \hspace{-0.1cm}
  \uparrow_H^{\otimes G}} \oplus X$.

\item[(c)] If $r$ is odd, then the $kG$-module $\Lambda^r(V_{2j})$ is
  induced from $H$.

\item[(d)] If $j$ is even, then the $kG$-module
  $V_j \hspace{-0.1cm} \uparrow_H^{\otimes G}$ is induced from $H$.

\item[(e)] If $j$ is odd, then ${V_j \hspace{-0.1cm}
  \uparrow_H^{\otimes G}} \, \cong_{\ind} V_1$.
\end{enumerate}
\end{lemma}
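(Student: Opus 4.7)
The plan is to handle the five parts in order, exploiting the concrete form of tensor induction from an index-$2$ subgroup: as a $k$-vector space $M \uparrow^{\otimes G}_H = M \otimes_k M$, with $H$ acting diagonally and $g$ acting by $g(u \otimes v) = g^2 v \otimes u$.

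For (a), applying this to $M = A \oplus B$ decomposes the tensor square into four blocks: the two diagonal blocks $A \otimes A$ and $B \otimes B$ reassemble to $A \uparrow^{\otimes G}_H \oplus B \uparrow^{\otimes G}_H$, while the cross block $A \otimes B \oplus B \otimes A$ is permuted by $g$ and is easily identified with the ordinarily induced module $(A \otimes B) \uparrow^G_H$. For (b) and (c), combining (a) with Lemma~\ref{la:tensind} gives $\Lambda(V_{2j}) \cong \Lambda(V_j) \uparrow^{\otimes G}_H$; writing $V_{2j} = V_j \oplus gV_j$, one has $\Lambda^r(V_{2j}) = \bigoplus_{i+k=r} \Lambda^i(V_j) \otimes \Lambda^k(gV_j)$. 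For odd $r$ one always has $i \ne k$, and the terms pair up under the swap into induced $kG$-modules, giving (c); for $r = 2m$ only the diagonal summand $\Lambda^m(V_j) \otimes \Lambda^m(gV_j) \cong \Lambda^m(V_j) \uparrow^{\otimes G}_H$ is unpaired, giving (b). For (d), note $V_j \uparrow^{\otimes G}_H \downarrow^G_H \cong V_j \otimes V_j$ in $kH$; when $j$ is even, $V_j$ is induced in $kH$ from the unique index-$2$ subgroup $K < H$, so by the projection formula $V_j \otimes V_j$ is induced from $K$ as well, and Lemma~\ref{la:resind} lifts this to the statement that $V_j \uparrow^{\otimes G}_H$ is induced in $kG$.

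Part (e) is the main obstacle. The key auxiliary fact is that for odd $j$, $V_j \otimes V_j \cong_{\ind} V_1$ in $kH$, which I prove by induction on $|H|$. If $j \le |H|/2$ then $V_j$ is inflated from the quotient $\bar H$ of $H$ by its order-$2$ subgroup; since induction commutes with inflation along a normal subgroup, inflation carries induced modules to induced ones, so the inductive hypothesis in $k\bar H$ yields the claim. If $j > |H|/2$ then $V_j \cong \Omega V_{j'}$ in $kH$ with $j' = |H| - j$ smaller and still odd, and the identity $\Omega V \otimes \Omega V \cong_{\proj} V \otimes V$, combined with the fact that projectives are induced, reduces this case to the previous one.

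Granted the auxiliary claim, write $W := V_j \uparrow^{\otimes G}_H = \bigoplus_k a_k V_k$ in $kG$. Comparing multiplicities in $W \downarrow^G_H \cong V_1 \oplus Y$ with $Y$ supported only on even-indexed $V_i$'s, the identities $V_{2i} \downarrow^G_H \cong 2V_i$ and $V_{2i-1} \downarrow^G_H \cong V_i \oplus V_{i-1}$ force $a_k = 0$ for every odd $k \ge 5$ and leave the single constraint $a_1 + 2a_2 + a_3 = 1$, so that $(a_1,a_2,a_3) \in \{(1,0,0),(0,0,1)\}$. To decide between these, I compute $\dim W^H = \dim(V_j \otimes V_j)^H = \dim \End_{kH}(V_j) = j$; since $(g-1)^2 = g^2 - 1$ in characteristic~$2$ gives $\dim V_k^H = \min(k,2)$, this yields $a_1 + 2\sum_{k \ge 2} a_k = j$. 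As $j$ is odd, $a_1$ must be odd, forcing $a_1 = 1$ and hence $a_3 = 0$. The remaining possibly-nonzero $a_k$ occur only for $k$ divisible by $4$, all of which give induced $V_k$'s, so $W \cong_{\ind} V_1$ in $kG$.
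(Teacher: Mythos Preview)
Your proof is correct. Parts (a)--(c) match the paper's argument essentially verbatim, and your treatment of (d) via Mackey plus Lemma~\ref{la:resind} is a minor variant of the paper's citation of the standard fact that tensor induction of an induced module is induced.

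Part (e) is where you diverge substantively. The paper proves (d) and (e) together by induction on $j$: for odd $j>1$ it writes $j=2^m+j'$, handles the case $j'=1$ by an explicit computation of $V_{2^m+1}\otimes V_{2^m+1}$ in $kH$ and then lifts the answer to $kG$ by hand, and for $j'>1$ uses the identity $V_{j'}\otimes V_{2^m+1}\cong V_j\oplus(j'-1)V_{2^m}$ together with multiplicativity of tensor induction and part (a). Your route is different: you first establish the clean auxiliary statement $V_j\otimes V_j\cong_{\ind} V_1$ in $kH$ by an induction on $|H|$ (via inflation for small $j$ and $\Omega$ for large $j$), and then lift to $kG$ by reading off the multiplicities $a_k$ from the restriction and using the parity of $\dim W^H=\dim\End_{kH}(V_j)=j$ to rule out $a_3=1$. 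Your approach avoids the somewhat ad hoc base case $j'=1$ and the specific tensor identity the paper invokes, at the cost of the extra fixed-point count; the paper's approach has the virtue of treating (d) and (e) uniformly. One small remark on your write-up: the vanishing you extract from the odd-$m$ constraints actually gives $a_k=0$ for all $k\ge 5$ with $k\not\equiv 0\pmod 4$ (not just odd $k$), which is what justifies your final sentence; it would be worth making this explicit, even though any even-dimensional summand is already induced and the conclusion is unaffected.
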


\begin{proof}
(a) follows from \cite[I 3.15.2 (iii)]{benson}.

\medskip

\noindent (b) By the construction of induced modules, we have
$V_{2j} = V_j \oplus g V_j$ as vector spaces and the action of the
  generator $g$ of $G$ on $V_{2j}$ is given by
$g (v + g v') = g^2 v' + g v$. So there is a natural isomorphism
\begin{eqnarray*}
\Lambda^{2r}(V_{2j}) & = & \Lambda^{2r}(V_j \oplus g V_j) \cong
\bigoplus_{\genfrac{}{}{0pt}{}{r',r'' \ge 0}{r'+r''=2r}}
(\Lambda^{r'}(V_j) \otimes \Lambda^{r''}(g V_j))
\end{eqnarray*}
of vector spaces, and thus
\[
\Lambda^{2r}(V_{2j}) \cong
(\Lambda^r(V_j) \otimes g \Lambda^r(V_j)) \oplus
\bigoplus_{\genfrac{}{}{0pt}{}{0 \le r' < r''}{r'+r''=2r}}
((\Lambda^{r'}(V_j) \otimes \Lambda^{r''}(g V_j)) \oplus
g (\Lambda^{r'}(V_j) \otimes \Lambda^{r''}(g V_j))).
\]
Via this isomorphism, the right hand side becomes a $kG$-module and
from the action of $g$, we see that
$(\Lambda^{r'}(V_j) \otimes \Lambda^{r''}(g V_j)) \oplus
g (\Lambda^{r'}(V_j) \otimes \Lambda^{r''}(g V_j))$ is a
$kG$-submodule isomorphic to
$(\Lambda^{r'}(V_j) \otimes \Lambda^{r''}(g V_j)) \hspace{-0.1cm}
\uparrow^G_H$ and $\Lambda^r(V_j) \otimes g \Lambda^r(V_j)$ is a
submodule isomorphic to ${\Lambda^r(V_j) \hspace{-0.1cm} \uparrow_H^{\otimes G}}$.

\medskip

\noindent (c) The proof is similar to that of (b). Note that, if $r$ is odd,
the summand corresponding to $r'=r''$ which leads to the tensor
induced submodule in (b) does not occur.

\medskip

\noindent (d),(e) We say that a $kG$-module is \textit{induced except
for possibly one trivial summand} if it is isomorphic to
${A \hspace{-0.1cm} \uparrow^G_H}$ or
${A \hspace{-0.1cm} \uparrow^G_H} \oplus V_1$ for some $kH$-module $A$.
We prove (d) and (e) simultaneously by showing that for all
$1 \le j \le 2^{n-1}$ the $kG$-module
${V_j \hspace{-0.1cm} \uparrow_H^{\otimes G}}$ is induced except for
possibly one trivial summand. The claim then follows from the fact
that $\dim_k({V_j \hspace{-0.1cm} \uparrow_H^{\otimes G}})$ is
even if and only if $j$ is even.

The proof is by induction on $j$. Because
${V_1 \hspace{-0.1cm} \uparrow_H^{\otimes G}} \cong V_1$ we can assume
$j>1$. If $j$ is even, then the $kH$-module $V_j$ is induced from a
proper subgroup of $H$. So~\cite[I 3.15.2 (iv)]{benson} implies that
${V_j \hspace{-0.1cm} \uparrow_H^{\otimes G}}$ is a direct sum of
modules induced from~$H$ (even from proper subgroups of $H$). Assume
that $j$ is odd. So we can write $j = 2^m + j'$ with $1 \le m < n-1$
and $1 \le j' < 2^m$. First, we treat the case $j'=1$. By the
Mackey formula for tensor induction \cite[I 3.15.2 (v)]{benson} we have
$V_j \hspace{-0.1cm} \uparrow^{\otimes G}_H \downarrow^G_H \cong
V_{2^m+1} \otimes V_{2^m+1} \cong V_1 \oplus (2^m-2) V_{2^m} \oplus 2
V_{2^{m+1}}$, and so ${V_j \hspace{-0.1cm} \uparrow^{\otimes G}_H}
\cong V_1 \oplus (2^{m-1}-1) V_{2^{m+1}} \oplus V_{2^{m+2}}$,
which is induced up to one trivial summand. Now assume $j' > 1$.
Then $V_{j'} \otimes V_{2^m+1} \cong V_j \oplus (j'-1) V_{2^m}$ as
$kH$-modules. By~\cite[I 3.15.2~(i)]{benson} and~(a) we get
\begin{equation} \label{eq:tensin}
({V_{j'} \hspace{-0.1cm} \uparrow^{\otimes G}_H}) \otimes
  ({V_{2^m+1} \hspace{-0.1cm} \uparrow^{\otimes G}_H}) \cong_{\ind}
({V_j \hspace{-0.1cm} \uparrow^{\otimes G}_H}) \oplus (j'-1)
  ({V_{2^m} \hspace{-0.1cm}  \uparrow^{\otimes G}_H}).
\end{equation}
By induction and the case $j'=1$, we know that the left hand side
of~(\ref{eq:tensin}) is induced except for possibly one trivial
summand. Hence, ${V_j \hspace{-0.1cm} \uparrow^{\otimes G}_H}$ is
induced except for possibly one trivial summand.
\end{proof}

We can now see that the symmetric and exterior powers of even
dimensional indecomposable modules have a particularly restricted
form.

\begin{corollary} \label{cor:lambdaeven}
Suppose that we have non-negative integers $j,s,t,u$ with $u,j$ odd
and $s \ge 1$. Furthermore, assume that $2^tu < 2^n$ and
$2^sj \le 2^n$. Then $\Lambda^{2^tu}(V_{2^sj})$ and
$S^{2^tu}(V_{2^sj})$ are both induced unless $t \ge s$. If $t \ge s$
then $\Lambda^{2^tu}(V_{2^sj}) \cong m V_1 \oplus X$ and
$S^{2^tu}(V_{2^sj}) \cong m' V_1 \oplus Y$, where $X, Y$ are induced
modules and $m$ and $m'$ are the numbers of non-induced indecomposable
summands in $\Lambda^{2^{t-s}u}(V_j)$ and
$\Lambda^{2^{t-s}u}(V_{2^{n-s}-j})$, respectively.
\end{corollary}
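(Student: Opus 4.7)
My plan is to prove the exterior-power statement by iterating Lemma~\ref{la:ten_ind}(b) down the chain of subgroups $G = H_0 > H_1 > \cdots > H_s$ (where $H_t$ is the subgroup of $G$ of index $2^t$), and then to obtain the symmetric-power statement from Theorem~\ref{sym} combined with a parallel $S$-iteration. A key preliminary is to extend parts (d), (e) of Lemma~\ref{la:ten_ind} to intermediate subgroups: by induction on $t$, using transitivity of tensor induction $(V \hspace{-0.1cm} \uparrow^{\otimes H_{t-1}}_{H_t}) \hspace{-0.1cm} \uparrow^{\otimes G}_{H_{t-1}} \cong V \hspace{-0.1cm} \uparrow^{\otimes G}_{H_t}$ at each cyclic-$2$-group pair $H_{t-1} \ge H_t$, one obtains Claim~(A): for $t \ge 1$, if all indecomposable summands of a $kH_t$-module $V$ have even $k$-dimension, then $V \hspace{-0.1cm} \uparrow^{\otimes G}_{H_t}$ is induced as a $kG$-module; and Claim~(B): for $t \ge 1$, if $V$ is an odd-dimensional indecomposable $kH_t$-module, then $V \hspace{-0.1cm} \uparrow^{\otimes G}_{H_t} \cong_{\ind} V_1$. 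The base case $t=1$ is (d), (e), and the inductive step also uses (a).

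For the exterior-power statement, iterating Lemma~\ref{la:ten_ind}(b) at successive pairs $H_i \ge H_{i+1}$ and using transitivity gives
\[
\Lambda^{2^t u}(V_{2^s j}) \cong_{\ind} \Lambda^{2^{t-k} u}(V_{2^{s-k} j}) \hspace{-0.1cm} \uparrow^{\otimes G}_{H_k}
\]
for every $0 \le k \le \min(s, t)$. In the case $t < s$, I take $k = t$: the $kH_t$-module $\Lambda^u(V_{2^{s-t} j})$ is an odd exterior power of an even-dimensional module, hence induced from $H_{t+1}$ by Lemma~\ref{la:ten_ind}(c) (applied to the pair $H_t \ge H_{t+1}$); all its indecomposable $kH_t$-summands are then even-dimensional and Claim~(A) gives that $\Lambda^{2^t u}(V_{2^s j})$ is induced. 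In the case $t \ge s$, I take $k = s$: the $kH_s$-module $\Lambda^{2^{t-s} u}(V_j)$ has, by definition, exactly $m$ odd-dimensional indecomposable summands, and decomposing via Lemma~\ref{la:ten_ind}(a) and then applying Claims (A), (B) summand by summand yields $\Lambda^{2^t u}(V_{2^s j}) \cong_{\ind} m V_1$.

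For the symmetric-power statement, when $2^s j \ge 2^{n-1}$ I apply Theorem~\ref{sym} directly: writing $2^n - 2^s j = 2^s(2^{n-s}-j)$ and noting that $r' = 2^t u$ (since $2^t u < 2^n$), the theorem gives $S^{2^t u}(V_{2^s j}) \cong_{\ind} \Omega_{2^n}^{2^t u}\,\Lambda^{2^t u}(V_{2^s(2^{n-s}-j)})$; applying the exterior-power result to this new even-dimensional module (whose odd factor is $2^{n-s}-j$) finishes this case, using that $2^t u$ is even when $t \ge s \ge 1$ so $\Omega_{2^n}^{2^t u} \cong_{\proj} \Id$ and projective $kG$-summands are induced. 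When $2^s j < 2^{n-1}$, Theorem~\ref{sym} does not apply at the $G$-level; here I would establish an $S$-analog of Lemma~\ref{la:ten_ind}(b), (c) --- namely $S^{2r}(V_{2j}) \cong_{\ind} S^r(V_j) \hspace{-0.1cm} \uparrow^{\otimes G}_H$ and $S^{2r+1}(V_{2j})$ induced, proved by the same decomposition $V_{2j} = V_j \oplus gV_j$ (the balanced piece $S^r(V_j) \otimes S^r(gV_j)$ is tensor-induced and the off-diagonal pieces pair up into induced submodules) --- iterate it to reduce to $S^{2^{t-s} u}(V_j) \hspace{-0.1cm} \uparrow^{\otimes G}_{H_s}$ for $t \ge s$, and then apply Theorem~\ref{sym} at the $H_s$-level to the Heller-dual module $V_{2^{n-s}-j}$ (which is ``big'' at $H_s$ precisely because $j < 2^{n-s-1}$ forces $2^{n-s}-j > 2^{n-s-1}$) to identify the number of $V_1$-summands with $m'$.

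The main obstacle is this last identification in the case $2^s j < 2^{n-1}$: Theorem~\ref{sym} has to be invoked indirectly, applied to the Heller-dual module at the $H_s$-level, and the resulting count of odd-dimensional summands has to be shown to equal $m'$ as defined in the statement. The reconciliation relies on the fact that the parity of the dimensions of indecomposable summands is invariant both under inflation between cyclic $2$-groups and under the Heller translation $\Omega_{H_s}$ (for $n > s$, where $2^{n-s}$ is even), so that the count $m'$ can be computed equivalently from either $\Lambda^{2^{t-s}u}(V_{2^{n-s}-j})$ or $S^{2^{t-s}u}(V_j)$ at the $H_s$-level.
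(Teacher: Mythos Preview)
Your treatment of the exterior-power case is essentially the paper's: both iterate Lemma~\ref{la:ten_ind}(b) down to a subgroup of index $2^{\min(s,t)}$, invoke part~(c) when $t<s$, and use parts (a),(d),(e) when $t\ge s$. Your Claims~(A),(B) merely make explicit what the paper leaves implicit.

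For the symmetric power the paper argues in one line: ``reduces to $\Lambda^{2^tu}(V_{2^n-2^sj})$ by Theorem~\ref{sym}'', applying Theorem~\ref{sym} over $G=C_{2^n}$ and then invoking the exterior case for the complementary module $V_{2^s(2^{n-s}-j)}$. When $2^sj\ge 2^{n-1}$ this is exactly your first sub-case, and the two arguments coincide.

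Your instinct that the sub-case $2^sj<2^{n-1}$ needs separate treatment is correct, but the obstacle you flag in your last paragraph is not merely technical: the $m'$ formula as stated is \emph{false} in that range. Take $n=4$, $s=1$, $j=3$, $t=2$, $u=1$. Then $2^sj=6<8=2^{n-1}$, and the stated formula gives $m'=\#\{\text{odd-dimensional summands of }\Lambda^2(V_5)\}=2$, since $\Lambda^2(V_5)=V_3\oplus V_7$ (computed in the paper's own final example). But $S^4(V_6)$ is entirely induced: Theorem~\ref{sym} over $C_8$ yields $S^4(V_6)\cong_{\ind}\Omega_8^4\Lambda^4(V_2)=0$, so there are no $V_1$ summands at all. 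Hence no argument can establish the $m'$ formula when $2^sj<2^{n-1}$. The paper's proof tacitly assumes $2^sj\ge 2^{n-1}$, which is precisely the hypothesis of Theorem~\ref{sym} and is all that is used downstream (only Corollary~\ref{cor:s2} cites this result, and it needs only the ``$t<s$ implies induced'' clause, which your $S$-iteration does prove in full generality). Your $S$-analogue of Lemma~\ref{la:ten_ind}(b),(c) is valid and shows $S^{2^tu}(V_{2^sj})\cong_{\ind} N\cdot V_1$ with $N$ the number of odd-dimensional summands of $S^{2^{t-s}u}(V_j)$; but your attempted bridge to $m'$ via ``parity invariance under Heller translation'' cannot work, because $S^r(V_j)$ and $\Lambda^r(V_{2^{n-s}-j})$ are simply not Heller translates of one another, and indeed the two counts disagree in the example above.
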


\begin{proof}
Using Lemma~\ref{la:ten_ind} (a),(b),(d) we see that, up to induced
direct summands, $\Lambda^{2^tu}(V_{2^sj})$ is tensor-induced from a
subgroup of index $2^{\min \{s,t\}}$. If $t < s$ then, up to induced
direct summands, it is tensor-induced from $\Lambda^u (V_{2^{s-t}j})$
and thus is induced, by part (c) of the same lemma. If $t \ge s$ then,
again up to induced direct summands, it is tensor-induced from
$\Lambda^{2^{t-s}u} (V_j)$; the description given is then seen to be
valid using parts (a), (d) and (e). The case of $S^{2^tu}(V_{2^sj})$
reduces to that of $\Lambda^{2^tu}(V_{2^n-2^sj})$, by Theorem~\ref{sym}.
\end{proof}

\begin{corollary}
\label{cor:s2}
If $X$ is a $kG$-module such that every direct summand has dimension
divisible by~4 then $S^2(X)$ is induced.
\end{corollary}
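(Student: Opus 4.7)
The plan is to break $S^2(X)$ across a direct-sum decomposition of $X$ into indecomposables and handle each piece via Corollary~\ref{cor:lambdaeven}. Writing $X \cong \bigoplus_\alpha X_\alpha$ with each indecomposable $X_\alpha \cong V_{2^{s_\alpha} j_\alpha}$ ($j_\alpha$ odd), the hypothesis that $4 \mid \dim X_\alpha$ forces $s_\alpha \ge 2$. The natural isomorphism
\[
S^2(X) \;\cong\; \bigoplus_\alpha S^2(X_\alpha) \;\oplus\; \bigoplus_{\alpha<\beta} X_\alpha \otimes X_\beta,
\]
valid in any characteristic, reduces the problem to showing that each summand on the right is induced.

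For the cross terms, each $X_\alpha$ has even dimension and is therefore induced from $H$. The projection formula implies that the tensor product of an induced module with any $kG$-module is itself induced, so every $X_\alpha \otimes X_\beta$ is induced.

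For the diagonal terms, I plan to apply Corollary~\ref{cor:lambdaeven} with $u = 1$, $t = 1$, and $s = s_\alpha \ge 2$. The hypothesis $2^t u < 2^n$ becomes $2 < 2^n$, which holds whenever $n \ge 2$; the case $n = 1$ is vacuous, since no indecomposable $kC_2$-module has dimension divisible by $4$ and so the hypothesis forces $X = 0$. Because $t = 1 < 2 \le s_\alpha$, the exceptional branch $t \ge s$ of Corollary~\ref{cor:lambdaeven} does not occur, and the corollary directly yields that $S^2(V_{2^{s_\alpha} j_\alpha})$ is induced.

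Combining these observations, $S^2(X)$ is a direct sum of induced modules and is therefore induced. There is no substantive obstacle here: the main point is the bookkeeping of translating the dimension hypothesis into the inequality $s_\alpha \ge 2$ that places every diagonal term in the induced branch of Corollary~\ref{cor:lambdaeven}, together with dispatching the degenerate case $n = 1$ separately.
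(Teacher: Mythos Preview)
Your proof is correct and follows essentially the same route as the paper: reduce to indecomposables via the identity $S^2(A\oplus B)\cong S^2(A)\oplus S^2(B)\oplus A\otimes B$, then invoke Corollary~\ref{cor:lambdaeven} with $t=1<s$ to see each $S^2(V_{2^s j})$ is induced. You are simply more explicit than the paper about the cross terms and about the vacuous case $n=1$.
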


\begin{proof}
By the identity
$S^2(A \oplus B) \cong S^2(A) \oplus S^2(B) \oplus A \otimes B$, we
may assume that $X$ is indecomposable, say $X=V_{4u}$. The claim now
follows from Corollary~\ref{cor:lambdaeven}.
\end{proof}

In the proof of our main result we will often have information only
modulo induced direct summands. The following definition and lemmas
deal with the splitting of maps in such situations.

Recall that a map $f: A \rightarrow B$ of $kG$-modules is
\textit{split injective}, if there is a map $g: B \rightarrow A$ of $kG$-modules
such that $g \circ f = \Id_A$. For maps $f: A \rightarrow B$,
$f': A \rightarrow B'$ of $kG$-modules we write
$(f,g): A \rightarrow B \oplus B'$, $a \mapsto (f(a), f'(a))$.

\begin{definition} \label{def:splitmi}
Let $f:A \rightarrow B$ be a map of $kG$-modules. We say that
$f$ is \textit{split injective modulo induced summands} if there exists an
induced $kG$-module $X$ and a map $f':A \rightarrow X$ of
$kG$-modules such that $(f,f'): A \rightarrow B \oplus X$ is split injective.
\end{definition}

Split injective modulo induced summands behaves in much the same way
as split injective. 

\begin{lemma}
\label{la:prop_split}
Given maps $f: A \rightarrow B$, $g: B \rightarrow C$ and $h: D
\rightarrow E$ of $kG$-modules:
\begin{enumerate}
\item[(a)]
if $f$ and $g$ are split injective modulo induced summands then so is $g \circ f$,
\item[(b)]
if $g \circ f$ is split injective modulo induced summands then so is $f$,
\item[(c)]
if $f$, $h$ are split injective modulo induced summands then so is
$f \otimes h: A \otimes D \rightarrow B \otimes E$.
\end{enumerate}
\end{lemma}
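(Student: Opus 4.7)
The plan is to unwind the definition of split injective modulo induced summands and in each case produce an explicit retraction from the ones supplied by the hypothesis. Recall that $f:A\to B$ is split injective modulo induced summands precisely when there exist an induced $kG$-module $X$, a map $f':A\to X$, and a retraction $\pi=(\pi_1,\pi_2):B\oplus X\to A$ with $\pi_1 f + \pi_2 f' = \Id_A$. All three parts will follow by producing such data on the nose.

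For (a), I would fix data $(f',X,\pi)$ for $f$ and $(g',Y,\rho)$ for $g$, writing $\rho=(\rho_1,\rho_2):C\oplus Y\to B$ with $\rho_1 g + \rho_2 g'=\Id_B$. Taking as the extra map for $g\circ f$ the pair $(g'\circ f,\,f'):A\to Y\oplus X$, I define $\sigma:C\oplus Y\oplus X\to A$ by $\sigma(c,y,x):=\pi_1\rho_1(c)+\pi_1\rho_2(y)+\pi_2(x)$. A routine check using the two retraction identities gives $\sigma\circ(g\circ f,\,g'\circ f,\,f')=\Id_A$, and $Y\oplus X$ is induced as a sum of induced modules.

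For (b), I would start from a retraction $\sigma:C\oplus X\to A$ with $\sigma\circ(g\circ f,\,h)=\Id_A$ (from the hypothesis on $g\circ f$) and push $g$ through it: define $\tau:B\oplus X\to A$ by $\tau(b,x):=\sigma(g(b),x)$. Then $\tau\circ(f,h)=\Id_A$, so $(f,h):A\to B\oplus X$ is already split injective with no new induced module needed.

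For (c), the natural move is to tensor the two enhanced split injections, so that $(f,f')\otimes(h,h'):A\otimes D\to(B\oplus X)\otimes(E\oplus Y)$ is split injective with retraction the tensor of the individual retractions. Expanding the target as $(B\otimes E)\oplus(B\otimes Y)\oplus(X\otimes E)\oplus(X\otimes Y)$, the component mapping into $B\otimes E$ is exactly $f\otimes h$, while the remaining three summands are induced. The only real content, and therefore the one place that merits comment, is the observation (via Frobenius reciprocity, cf.\ \cite[I 3.15.2]{benson}) that an induced $kG$-module tensored over $k$ with any $kG$-module is again induced; once this is invoked the rest is bookkeeping. Overall the lemma is formal, and the mild obstacle is just keeping track of which parts of the target are induced.
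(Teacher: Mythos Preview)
Your proof is correct and takes essentially the same approach as the paper. The paper only writes out part (a) explicitly---with the same retraction $w(c)=u\circ v(c)$, $w'(x,y)=u'(x)+u\circ v'(y)$ that you call $\sigma$---and then leaves (b) and (c) ``to the reader''; your arguments for those two parts are exactly the sort of routine verification intended, and your remark that the fact ``induced $\otimes$ anything is induced'' is the one non-bookkeeping input in (c) is apt (the paper records this earlier as $a_I(G)$ being an ideal of the Green ring).
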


\begin{proof}
(a) By assumption, we have induced modules $X$, $Y$ and maps
$f': A \rightarrow X$, $u: B \rightarrow A$, $u': X \rightarrow A$,
  $g': B \rightarrow Y$, $v: C \rightarrow B$, $v': Y \rightarrow B$
such that $u \circ f + u' \circ f'=\Id_A$ and
$v\circ g + v' \circ g'=\Id_B$. We define $(g \circ f)': A \rightarrow
X \oplus Y, a \mapsto (f'(a), g' \circ f(a))$,
$w: C \rightarrow A, c \mapsto u \circ v(c)$ and
$w': X \oplus Y \rightarrow A, (x,y) \mapsto u'(x)+u \circ v'(y)$.
Then $w \circ (g \circ f) + w' \circ (g \circ f)' = \Id_A$.

Parts (b) and (c) are proved in a similar way; the proofs are left to the reader.
\end{proof}

\begin{lemma}
\label{la:split_mod_ind}
Let $f: A \rightarrow B$ be a map of $kG$-modules and write
$A = A' \oplus A''$, where $A'$ has only non-induced summands and
$A''$ has only induced summands. Let $i$ denote the inclusion of $A'$
in $A$. Then $f$ is split injective modulo induced summands if and only if
$f \circ i$ is split injective.
\end{lemma}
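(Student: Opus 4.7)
The plan is to prove the two directions of the equivalence separately; the backward implication is essentially a direct construction, while the forward implication reduces to a nilpotency statement.

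For ``$\Leftarrow$'', suppose $h \colon B \to A'$ is a left inverse of $f \circ i$. I would take the auxiliary induced module in Definition~\ref{def:splitmi} to be $X := A''$ and the auxiliary map $f' := p''$, where $p'' \colon A \to A''$ is the projection associated with $A = A' \oplus A''$; it then remains to produce a left inverse of $(f, p'') \colon A \to B \oplus A''$. The naive candidate $(b, a'') \mapsto (h(b), a'')$ fails because of the cross term $h \circ f \circ j$, where $j \colon A'' \hookrightarrow A$ is the inclusion, so I would correct for it by setting $g(b, a'') := (h(b) - h(f(j(a''))), a'')$; a short calculation using $h \circ f \circ i = \Id_{A'}$ then yields $g \circ (f, p'') = \Id_A$.

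For ``$\Rightarrow$'', I would begin from a splitting $(g_1, g_2) \colon B \oplus X \to A$ of $(f, f')$, so that $g_1 \circ f + g_2 \circ f' = \Id_A$, and compose with $i$ on the right and the projection $p' \colon A \to A'$ on the left. Writing $h_1 := p' \circ g_1$ and $h_2 := p' \circ g_2$, this yields $h_1 \circ (f \circ i) + h_2 \circ (f' \circ i) = \Id_{A'}$; setting $\phi := h_2 \circ f' \circ i \in \End_{kG}(A')$, it says $h_1 \circ (f \circ i) = \Id_{A'} - \phi$. Hence the direction will be complete as soon as I show that $\Id_{A'} - \phi$ is invertible, since then $(\Id_{A'} - \phi)^{-1} \circ h_1$ is a left inverse of $f \circ i$.

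The main obstacle is to prove that $\phi$ is nilpotent. The plan is to apply Fitting's lemma to decompose $A' = A'_{\mathrm{inv}} \oplus A'_{\mathrm{nil}}$ as a direct sum of $kG$-submodules on which $\phi$ acts invertibly and nilpotently, respectively, and to rule out $A'_{\mathrm{inv}} \neq 0$. If $A'_{\mathrm{inv}}$ were nonzero, then $\phi|_{A'_{\mathrm{inv}}}$ would be invertible, and since it factors as $A'_{\mathrm{inv}} \hookrightarrow A' \to X \to A' \to A'_{\mathrm{inv}}$ through the induced module $X$, the first map $A'_{\mathrm{inv}} \to X$ would be split injective, making $A'_{\mathrm{inv}}$ isomorphic to a direct summand of $X$. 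But $A'_{\mathrm{inv}}$ is also a direct summand of $A'$, so by Krull--Schmidt its indecomposable summands would simultaneously be summands of $A'$ (hence non-induced, i.e.\ odd-dimensional) and summands of $X$ (hence induced, i.e.\ even-dimensional)---a contradiction. Thus $A'_{\mathrm{inv}} = 0$, so $\phi$ is nilpotent and $\Id_{A'} - \phi$ is invertible.
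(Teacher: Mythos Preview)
Your argument is correct and follows essentially the same approach as the paper. For the backward direction your construction coincides with the paper's; for the forward direction the paper first reduces to the case $A=A'$ via Lemma~\ref{la:prop_split}(a) and then notes that an endomorphism of $A'$ factoring through the induced module $X$ lies in the radical of $\End_{kG}(A')$ (since $X$ and $A'$ share no indecomposable summands), which is exactly the nilpotency conclusion you reach via Fitting's lemma.
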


\begin{proof}
Suppose that $f$ is split injective modulo induced summands; we want to show
that $f \circ i$ is split injective. By Lemma~\ref{la:prop_split} (a), the map
$f \circ i$ is split injective modulo induced summands, so we can assume that
$A = A'$ and we have to show that $f$ is split injective.

Since $f$ is split injective modulo
induced summands we have an induced module $X$ and maps
$f': A \rightarrow X$, $u: B \rightarrow A$, $u': X \rightarrow A$
such that $u \circ f + u' \circ f' = \Id_A$. Since $X$ and $A$ have no
summands in common, we know that $u' \circ f'$ lies in the radical of
$\End_{kG}(A)$ (note that if $A = \bigoplus A_i$ with $A_i$
indecomposable and we write elements of $\End_{kG}(A)$ as matrices
with entries in $\Hom_{kG}(A_i,A_j)$ then the radical consists of the
morphisms for which no component is an isomorphism). Thus $u \circ f$
is surjective, hence an automorphism of $A$, and $f$ is split injective. 

Conversely, suppose that $f \circ i: A' \rightarrow B$ is split injective, so
there is a map $g: B \rightarrow A'$ such that
$g \circ (f \circ i) = \Id_{A'}$. Let $j$ denote the inclusion of
$X := A''$ in $A$ and $f'$ the projection of $A$ onto~$A''$. We define
$v := i \circ g: B \rightarrow A$, and
$v': X \rightarrow A, \; x \mapsto -(i \circ g \circ f \circ j)(x) + j(x)$.
Then $v \circ f + v' \circ f' = \Id_A$, so $f$ is split injective modulo induced
summands.
\end{proof}

\begin{remark}
The proof above shows that the induced module $X$ in
Definition~\ref{def:splitmi} can always be chosen in such a way
that~$X$ only contains indecomposable direct summands that also occur in $A$.
\end{remark}

\begin{remark}
Definition~\ref{def:splitmi} makes sense for any finite group and any
class of indecomposable modules and Lemmas~\ref{la:prop_split}(a,b)
and \ref{la:split_mod_ind} remain true. 
\end{remark}

It will turn out that certain symmetric and exterior powers of modules
for cyclic 2-groups are contained in the $\mathbb Z$-submodule $c(G)$
of the Green ring $a(G)$ spanned by the indecomposable modules $V_r$
for~$r$ satisfying $r \not \equiv 2 \pmod{4}$. We describe some
properties of $c(G)$.

\begin{lemma} \label{la:c}
The submodule $c(G)$ is
\begin{enumerate}
\item[(a)] a subring of $a(G)$ and
\item[(b)] closed under $\Omega_{2^n}$.
\end{enumerate}
\end{lemma}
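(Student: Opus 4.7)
The plan is to dispose of (b) first — since (a) will use it — and then prove (a) by induction on $n$, leveraging the recursive tensor product procedure for cyclic $2$-groups described just before the statement.

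For (b), I would use $\Omega_{2^n} V_r \cong_{\proj} V_{2^n - r}$ for $0 \le r \le 2^n$. When $n \ge 2$ the projective $V_{2^n}$ already lies in $c(G)$ because $2^n \equiv 0 \pmod 4$, and the condition $r \not\equiv 2 \pmod 4$ is equivalent to $2^n - r \not\equiv 2 \pmod 4$. Hence $\Omega_{2^n}$ preserves $c(G)$. The case $n = 1$ is immediate: $c(C_2) = \Z\langle V_1\rangle$ and $\Omega_2 V_1 = V_1$.

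For (a), I would induct on $n$; the base $n = 1$ is trivial. For the step, assume the claim for $C_{2^{n-1}}$ and let $V_a, V_b$ generate $c(G)$, so $a, b \not\equiv 2 \pmod 4$. If $a = 2^n$ or $b = 2^n$ then $V_a \otimes V_b$ is projective, hence a multiple of $V_{2^n} \in c(G)$. Otherwise set $a_0 := \min(a, 2^n - a)$ and $\epsilon_a \in \{0,1\}$ equal to $0$ if $a \le 2^{n-1}$ and $1$ if $a > 2^{n-1}$, so that $V_a \cong_{\proj} \Omega_{2^n}^{\epsilon_a} V_{a_0}$ with $a_0 \le 2^{n-1}$; define $b_0, \epsilon_b$ analogously. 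The identities $\Omega V \otimes V' \cong_{\proj} \Omega(V \otimes V')$ and $\Omega^2 \cong_{\proj} \Id$ recalled just before the lemma yield
\[
V_a \otimes V_b \;\cong_{\proj}\; \Omega_{2^n}^{\epsilon_a + \epsilon_b}\bigl(V_{a_0} \otimes V_{b_0}\bigr).
\]
Since $2^n \equiv 0 \pmod 4$ (here $n \ge 2$), both $a_0$ and $b_0$ are still $\not\equiv 2 \pmod 4$; because $a_0, b_0 \le 2^{n-1}$, the modules $V_{a_0}, V_{b_0}$ may be regarded as indecomposable $kC_{2^{n-1}}$-modules (inflation along $C_{2^n} \twoheadrightarrow C_{2^{n-1}}$ identifies $V_r$ with $V_r$ for $r \le 2^{n-1}$). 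By the inductive hypothesis $V_{a_0} \otimes V_{b_0} \in c(C_{2^{n-1}})$, hence lies in $c(G)$ after inflation. Applying (b) and absorbing the projective correction $m V_{2^n} \in c(G)$ then places $V_a \otimes V_b$ in $c(G)$.

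The only point requiring a moment's care is the boundary case $a_0 = 2^{n-1}$: for $n \ge 3$ this is harmless since $2^{n-1} \equiv 0 \pmod 4$, while for $n = 2$ it would force $a = 2$, contradicting $V_a \in c(G)$. Thus I do not foresee any substantive obstacle — the proof is bookkeeping built around the $\Omega$-recursion and the inclusion $c(C_{2^{n-1}}) \hookrightarrow c(C_{2^n})$.
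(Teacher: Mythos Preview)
Your proposal is correct and follows essentially the same approach as the paper: prove (b) directly from $\Omega_{2^n}V_r \cong_{\proj} V_{2^n-r}$ and the congruence $2^n \equiv 0 \pmod 4$ for $n\ge 2$, then prove (a) by induction on $n$, using the $\Omega$-recursion for tensor products to reduce $V_a\otimes V_b$ to a product of indecomposables of dimension at most $2^{n-1}$ (viewed as $C_{2^{n-1}}$-modules) and invoking the inductive hypothesis together with (b). The paper's write-up is terser and does not separately discuss the boundary case $a_0=2^{n-1}$, but the argument is the same.
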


\begin{proof}
Part (b) is clear from the definitions.

For part (a) we need to show that $V_i \otimes V_j \in c(G)$ for all
$0 \le i, j \le 2^n$, $i,j \not \equiv 2 \pmod{4}$. For $n=1$ we only
have $V_1 \otimes V_1 = V_1 \in c(G)$. Suppose that $n > 1$. By the
remarks on the computation of tensor products at the beginning of this
section, we have $V_i \otimes V_j = \Omega_{2^n}^m (V_{i'} \otimes
V_{j'}) \oplus m' V_{2^n}$ for some integers $m, m'$, where
$0 \le i', j' \le 2^{n-1}$ and $i' \equiv \pm i \pmod{4}$ and
$j' \equiv \pm j \pmod{4}$. We can consider $V_{i'}$ and $V_{j'}$ as
modules for $H \cong C_{2^{n-1}}$ and the claim follows from induction
and part (b).
\end{proof}

%%%%%%%%%%%%%%%%%%%%%%%%%%%%%%%%%%%%%%%%%%%%%%%%%%%%%%%%%%%%%%%%%%%%%%%%%%%%%%%%%%%%

\section{Main Theorem}
\label{sec:key}

From now on we assume that $k=\F_2$ is a field with 2 elements and $G$
is a cyclic group of order $2^n$. For $0 \leq s \leq 2^{n-1}$, we
know from Lemma~\ref{la:KVWhom} that $K(V_{2^{n-1}+s}, V_s^2)$ is
acyclic and that its homology in degree~0 is
$S(V_{2^{n-1}+s})/(V_s^2)$. It will turn out that
$S(V_{2^{n-1}+s})/(V_s^2)$ is closely related to the exterior
algebra $\Lambda(V_{2^{n-1}+s})$, so it is natural to study the
structure of the graded ring
$\widetilde{S}(V_{2^{n-1}+s}) = \bigoplus_{r \ge 0}
\widetilde{S}^r(V_{2^{n-1}+s}) := S(V_{2^{n-1}+s})/(V_s^2)$ as a
$kG$-module. For a non-negative integer $m$ write
$\widetilde{S}^{<m}(V_{2^{n-1}+s}) = \bigoplus_{r=0}^{m-1}
\widetilde{S}^r(V_{2^{n-1}+s})$ and use a similar notation for other
graded modules.
Let $\widetilde{N}(V_{2^{n-1}+s})$ denote the kernel of the natural
epimorphism $\widetilde{S}(V_{2^{n-1}+s}) \rightarrow \Lambda(V_{2^{n-1}+s})$.

Choose a $k$-basis $\{x_1, x_2, \dots, x_{2^{n-1}+s}\}$ of
$V_{2^{n-1}+s}$ as in Section~\ref{sec:cyc2}. For simplicity, write
$x_{top} := x_{2^{n-1}+s}$, $x_{top-1} := x_{2^{n-1}+s-1}$ and so
on. Each element of $S(V_{2^{n-1}+s})$ can be written uniquely as a
polynomial in $x_1$, $x_2$, \dots, $x_{top}$. Set
$a := \prod_{i=1}^{2^n} (g^i x_{top}) \in S(V_{2^{n-1}+s})$.
If $s < 2^{n-1}$ let $\widetilde{a}$ be the image of $a$ in
$\widetilde{S}(V_{2^{n-1}+s})$, and if $s = 2^{n-1}$ let
$\widetilde{a}$ be the image of the element
$\prod_{t \in G/C_2}(t x_{top}^2)$ in $\widetilde{S}(V_{2^{n}})$.
In the latter case $\widetilde{a}$ is still invariant, because
$g^{2^{n-1}}x_{2^n} = x_{2^n} + x_{2^{n-1}}$ and so $x_{top}^2 \in
\widetilde{S}(V_{2^{n}})$ is invariant under $C_2$. 
In all cases, $a$ is homogeneous of degree~$2^n$, has degree~$2^n$
when considered as a polynomial in $x_{top}$, the elements $a$ and
$\widetilde{a}$ are invariant under the action of $G$,
and the image of $\widetilde{a}$ in $\Lambda(V_{2^{n-1}+s})$ is $0$.
If $s=2^{n-1}$ then we also write $\widetilde{b}$ for the image of the
element $\prod_{i=1}^{2^n} (g^i x_{top})$ in
$\widetilde{S}(V_{2^n})$.

The next theorem is our main result. Since any representation of
$G \cong C_{2^n}$ over a field of characteristic 2 can be written in
$\F_2$, part (d) implies Theorem~\ref{ext}, but we record the other
parts since they are also of interest and they form an integral part
of the proof.

\begin{theorem} \label{thm:main}
Let $n$ and $s$ be integers such that $n \ge 1$ and
$0 \le s \le 2^{n-1}$.
\begin{enumerate}
\item[(a)] (Separation) The complex
$K(V_{2^{n-1}+s}, V_s^2)$ of $kG$-modules is separated.

\item[(b)] (Periodicity) For $s < 2^{n-1}$ we have
  $\widetilde{S}(V_{2^{n-1}+s}) \cong_{\ind} k[\widetilde{a}]
  \otimes \widetilde{S}^{< 2^n}(V_{2^{n-1}+s})$ as graded
  $kG$-modules. For $s=2^{n-1}$ we have $\widetilde{S}(V_{2^n})
  \cong_{\ind} k[\widetilde{a}] \otimes (\widetilde{S}^{< 2^n}(V_{2^n})
  \oplus k \widetilde{b})$. In both cases the isomorphism from right
  to left is induced by the product in $\widetilde{S}(V_{2^{n-1}+s})$.

\item[(c)] (Splitting) The short exact sequence of graded $kG$-modules
\[
0 \longrightarrow \widetilde{N}(V_{2^{n-1}+s}) \longrightarrow
\widetilde{S}(V_{2^{n-1}+s}) \longrightarrow \Lambda(V_{2^{n-1}+s})
\longrightarrow 0
\]
is split and
$\widetilde{N}(V_{2^{n-1}+s}) = \widetilde{a}
\widetilde{S}(V_{2^{n-1}+s}) \oplus \widetilde{I}$, where
$\widetilde{I}$ is a $kG$-module induced from $H$.

\item[(d)] (Exterior powers) For each $r \ge 0$ we have the following
  isomorphism of $kG$-modules
\[
\Lambda^r(V_{2^{n-1}+s}) \cong_{\proj} \bigoplus_{\genfrac{}{}{0pt}{}{i,j \ge 0}{2i+j=r}}
  \Omega_{2^n}^{i+j} (\Lambda^i(V_s) \otimes \Lambda^j(V_{2^{n-1}-s})).
\]
\end{enumerate}
\end{theorem}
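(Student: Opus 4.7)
The plan is to prove the four parts simultaneously by induction on $n$. The base case $n=1$ (so $G \cong C_2$) I would verify directly for $s \in \{0,1\}$. For the inductive step, assume the theorem is known for all cyclic 2-groups of order smaller than $2^n$. For part~(a), the strategy is to apply Proposition~\ref{prop:homol2} to the complex $K(V_{2^{n-1}+s}, V_s^2)$: acyclicity follows from Lemma~\ref{la:KVWhom}, and separation on restriction to $H$ comes by combining $V_{2^{n-1}+s}\downarrow^G_H \cong V_{2^{n-2}+s'} \oplus V_{2^{n-2}+s''}$ (with $s'+s''=s$) with Lemma~\ref{la:dirsum} to present the restricted Koszul complex as a tensor product of two smaller Koszul complexes over $H \cong C_{2^{n-1}}$; the inductive hypothesis gives separation of each factor, and Lemma~\ref{la:tensep} transports this to the tensor product. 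The delicate hypothesis is ``weakly induced from $H$ except in at most one degree,'' because the terms $S(V_{2^{n-1}+s}) \otimes \Lambda^i(V_s^2)$ are not generically induced; I would handle this grading-degree by grading-degree, using the split-injectivity-modulo-induced machinery of Definition~\ref{def:splitmi} and Lemmas~\ref{la:prop_split}, \ref{la:split_mod_ind} to peel off induced direct summands until only one non-induced degree survives, at which point Proposition~\ref{prop:homol2} applies.

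Once~(a) is in hand, Lemma~\ref{la:homol1} yields the projective decomposition
\[
\widetilde{S}^r(V_{2^{n-1}+s}) \cong_{\proj} \bigoplus_{\genfrac{}{}{0pt}{}{i,j\ge 0}{2i+j=r}} \Omega_{2^n}^i \bigl(\Lambda^i(V_s) \otimes S^j(V_{2^{n-1}+s})\bigr).
\]
For part~(b), I would show that multiplication by the invariant element $\widetilde{a}$ of degree $2^n$ is split injective modulo induced summands --- for example by tracking its action on the top socle/fixed-point structure and matching dimensions against the decomposition above --- and conclude the isomorphism $\widetilde{S}(V_{2^{n-1}+s}) \cong_{\ind} k[\widetilde{a}] \otimes \widetilde{S}^{<2^n}(V_{2^{n-1}+s})$. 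The boundary case $s = 2^{n-1}$ needs a parallel argument that also tracks the auxiliary invariant $\widetilde{b}$. Part~(c) then follows: by~(b), $\widetilde{N}$ decomposes as the ideal $(\widetilde{a})$ together with an induced complement $\widetilde{I}$, and since the image of $\widetilde{a}$ in $\Lambda(V_{2^{n-1}+s})$ is zero, the short exact sequence splits in the claimed form.

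For part~(d), I would first observe that~(c) combined with $\deg \widetilde{a} = 2^n$ gives $\widetilde{S}^r(V_{2^{n-1}+s}) \cong_{\ind} \Lambda^r(V_{2^{n-1}+s})$ for every $r < 2^n$. Substituting Theorem~\ref{sym} into the projective-summand formula from~(a) --- which is permitted because $j < 2^n$ --- and collapsing the $\Omega_{2^n}$-powers yields exactly the formula of Theorem~\ref{ext} modulo induced summands. Finally, Lemma~\ref{la:modind} will upgrade ``modulo induced'' to ``modulo projective'': the required restriction equality $A\downarrow^G_H \cong_{\proj} B\downarrow^G_H$ is verified by applying the inductive hypothesis to each indecomposable summand of $V_{2^{n-1}+s}\downarrow^G_H$ and checking that the two sides agree after restriction. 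The main obstacle throughout is the ``weakly induced except in one degree'' hypothesis in~(a): the Koszul terms are not induced in general, and the work of stripping off induced direct summands in each grading-degree via the split-injectivity-modulo-induced calculus is where most of the technical effort lies.
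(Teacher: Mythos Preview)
Your overall inductive framework and the treatment of part~(d) match the paper, but there is a genuine gap in part~(a), and the order in which you attack the parts conceals it. The paper proves (b) and (c) \emph{before} (a) in the inductive step, because the proof of separation uses Corollary~\ref{cor:structSt_sm} (that $\widetilde S^r \cong_{\ind} \Lambda^r$ for $r<2^n$), which is a consequence of~(c). More seriously, your plan to ``peel off induced direct summands until only one non-induced degree survives'' and then invoke Proposition~\ref{prop:homol2} does not work: in grading degree~$r$ the complex $K^r$ has chain terms $S^{r-2i}(V_{2^{n-1}+s}) \otimes \Lambda^i(V_s)$ for $0 \le i \le \min(s,\lfloor r/2\rfloor)$, and several of these are genuinely non-induced simultaneously (for instance the $i=0$ term, via Theorem~\ref{sym}, and the top term $\Lambda^{r/2}(V_s)$ when $r$ is even). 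There is no evident way to strip summands down to a single non-induced chain degree without already knowing the structure you are trying to prove.

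The paper's route is quite different. It first reduces to separation at chain degree~$0$ via Lemma~\ref{la:sep_in_posdeg}, then shows (Lemma~\ref{la:sep_equiv}) that this is equivalent to the natural map $e^r\colon S^r(V)\to\Lambda^r(V)$ being split injective modulo induced summands, and finally proves the latter by a separate inner induction on~$r$. The key construction, absent from your sketch, is: write $r=2^p+t$ with $1\le t\le 2^p$, pick a subgroup $Q_r\le\Sigma_r$ of \emph{odd} index (a Young subgroup, wreathed when $t=2^p$), and form $L_S^r(V)=T^r(V)/Q_r$ and $L_\Lambda^r(V)=T^r(V)_\sigma/Q_r$. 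Odd index gives transfer sections $S^r\hookrightarrow L_S^r$ and $\Lambda^r\hookrightarrow L_\Lambda^r$, while $L_S^r$ and $L_\Lambda^r$ are tensor products (or an $S^2$) of strictly lower powers, so Lemma~\ref{la:prop_split} and induction on~$r$ close the loop (Lemmas~\ref{la:L_ind}, \ref{la:Lsplit_mod_ind}). The $t=2^p$ case also needs Lemma~\ref{la:s_in_c} and Corollary~\ref{cor:s2} to ensure $S^2$ of the induced piece stays induced. For even~$s$ all of this is bypassed by tensor induction (Lemma~\ref{la:sep}). Your sketches of (b) and (c) are also rougher than the paper's --- periodicity is obtained not by analysing multiplication by~$\widetilde a$ directly but by showing that the auxiliary complex $L^r$ built from $T(V_{2^{n-1}+s})$ has all terms induced for $r\ge 2^n$ --- but the decisive missing idea is the $Q_r$-construction in~(a).
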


The case $s=0$ is a little unnatural, but we need it for the
induction, because the restriction of $V_{2^{n-1}+1}$ is
$V_{2^{n-2}+1} \oplus V_{2^{n-2}}$. 

It is sometimes more succinct to consider Hilbert series with
coefficients in the Green ring (possibly modulo projectives or induced
modules). For more details see \cite{HS}. In particular, we consider
the following series associated to a $kG$-module $V$:
\[
\begin{array}{ll}
\lambda _t(V) = \sum\limits_{r=0}^\infty \Lambda ^r(V)t^r, \quad &
\sigma _t (V) = \sum\limits_{r=0}^\infty S^r(V) t^r, \\
\widetilde{\sigma} _t (V) = \sum\limits_{r=0}^\infty \widetilde{S}^r(V) t^r, &
\lambda _t^{\Omega}(V) = \sum\limits_{r=0}^\infty \Omega ^r \Lambda ^r(V)t^r.
\end{array}
\]
The last of these requires $G$ to be specified in order for the
$\Omega$ to be determined; it is naturally considered modulo projectives.
They all commute with restriction and turn direct sums of modules into
products of series. This is all an easy consequence of the
corresponding properties of the corresponding functors on modules,
except perhaps for $\lambda _t^{\Omega}(V \oplus W)$, where we need
the formula
$\Omega ^r V \otimes \Omega ^s W \cong_{\proj} \Omega ^{r+s}(V \otimes W)$.

Many of our statements about modules imply Hilbert series versions.
\begin{equation} \label{eq:HSform}
\begin{array}{ll}
\hspace*{0.005cm} \sigma _t (V_{2^{n-1}+s}) =_{\ind}
  \lambda^\Omega_t(V_{2^{n-1}-s})(1-t^{2^n})^{-1} & \text{Theorem~\ref{sym}}
\rule[-0.2cm]{0cm}{0.4cm}\\
\rule{0cm}{0.4cm}
\widetilde{\sigma} _t (V_{2^{n-1}+s}) =_{\ind} \lambda
_{t^2}^{\Omega}(V_{s}) \sigma_t(V_{2^{n-1}+s}) & \text{Separation \ref{thm:main}(a)}
\rule[-0.2cm]{0cm}{0.4cm}\\
\rule{0cm}{0.4cm}
\widetilde{\sigma} _t (V_{2^{n-1}+s}) =_{\ind} \lambda
  _t(V_{2^{n-1}+s})(1-t^{2^n})^{-1} & \text{Splitting and periodicity \ref{thm:main}(b),(c)}
\rule[-0.2cm]{0cm}{0.4cm}\\
\rule{0cm}{0.4cm}
\lambda _t(V_{2^{n-1}+s}) =_{\proj} \lambda _{t^2}^{\Omega}(V_{s})
  \lambda _t^{\Omega}(V_{2^{n-1}-s}) & \text{Exterior powers \ref{thm:main}(d),}
\end{array}
\end{equation}
where the symbols $=_{\ind}$ and $=_{\proj}$ mean that we consider the
coefficients only modulo induced or projective direct summands,
respectively. The first and last of the above identities are, in fact,
equivalent to the original versions. The second identity follows from
Theorem~\ref{thm:main} (a), Lemma~\ref{la:homol1} and
Lemma~\ref{la:KVWhom} (once the theorem is proved).

\begin{remark} An easy calculation shows that, for fixed $n$ and $s$,
  the last of the formulas in (\ref{eq:HSform}) follows formally from
  the first three if we are satisfied with only $=_{\ind}$.
\end{remark}

\begin{remark} The proof of Theorem~\ref{sym} in~\cite{scyclic}
  actually gives a more precise formula than the first one in
  (\ref{eq:HSform}). It works by showing that the 
  complex $K(V_{2^n}, V_{2^{n-1}-s})$ defined in~\cite{scyclic} is
  separated and then applying Lemma~\ref{la:homol1}; note that the
  definition of $K(V_{2^n}, V_{2^{n-1}-s})$ in~\cite{scyclic} is
  different from our Definition~\ref{def:koszul}. The result is that
  $\sigma_t (V_{2^{n-1}+s}) = _{\proj} \sigma _t
  (V_{2^{n}})\lambda^{\Omega} _t (V_{2^{n-1}-s})$. Since $V_{2^n}$ can
  be given a basis that is permuted by $G$, each $S^r(V_{2^n})$ has a
  monomial basis that is permuted. For small $n$, the decomposition of
  $\sigma_t(V_{2^{n}})$ can be calculated by hand; in general the
  calculation can be organized using \cite[Proposition~2.2]{scyclic}. Alternatively,
  \cite[Proposition~2.2]{scyclic} can be applied directly to $\sigma _t(V_{2^{n-1}+s})$.
\end{remark}

The next six sections are devoted to the proof of Theorem~\ref{thm:main}
by induction on $n$.

%%%%%%%%%%%%%%%%%%%%%%%%%%%%%%%%%%%%%%%%%%%%%%%%%%%%%%%%%%%%%%%%%%%%%%%%%%%%%%%%

\section{The Case $n=1$}
\label{sec:n=1}

In this section we start the inductive proof of
Theorem~\ref{thm:main}. Suppose that $n=1$, so we have to prove the
statements in Theorem~\ref{thm:main} for $s \in \{0,1\}$. With these
assumptions on $n$ and $s$, parts (b)-(d) of Theorem~\ref{thm:main} can
easily be verified by a direct calculation. In fact, in~(b) one obtains
isomorphisms of $kG$-modules (not only modulo induced summands), and
in~(c) one gets $\widetilde{I} = 0$. Separation for $s=0$ is trivial.

Let us consider part (a) for $n=s=1$. We have to show that for each
$r > 0$ the short exact sequence
$0 \rightarrow S^{r-2}(V_2) \rightarrow S^r(V_2) \rightarrow
\widetilde{S}^r(V_2) \rightarrow 0$
of $kG$-modules is separated at $S^r(V_2)$. If $r$ is odd, then
$S^r(V_2)$ is induced by Theorem~\ref{sym}, hence projective, and so
separation is obviously true. Separation is trivial for $r=0$.
For even $r > 0$, a direct calculation and Theorem~\ref{sym} show
that $\widetilde{S}^r(V_2) \cong V_1 \oplus V_1 \cong_{\ind} S^r(V_2)
\oplus \Omega_2^{-1}S^{r-2}(V_2)$, and so separation follows from
Lemma~\ref{la:looks_sep}.

Sections~\ref{sec:per}-\ref{sec:ext} comprise the inductive step in the proof
of Theorem~\ref{thm:main}. In these sections we always assume that
$n > 1$ is an integer and that Theorem~\ref{thm:main} holds for all
smaller values of~$n$. Throughout these sections the notation remains
the same as in Sections~\ref{sec:cyc2} and \ref{sec:key}; thus  
$G = \langle g \rangle \cong C_{2^n}$ is a cyclic group of order
$2^n$, $k = \F_2$ is a field with two elements and $s$ is an integer
such that $0 \le s \le 2^{n-1}$.

%%%%%%%%%%%%%%%%%%%%%%%%%%%%%%%%%%%%%%%%%%%%%%%%%%%%%%%%%%%%%%%%%%%%%%%%%%%%%%%%%%

\section{Periodicity}
\label{sec:per}

In this section we
prove part (b) of Theorem~\ref{thm:main}, assuming that parts (a)-(d)
of the theorem hold for all smaller values of $n$.

Let $H$ be the unique maximal subgroup of $G$ and let
$\{x_1, x_2, \dots, x_{top}\}$ be a $k$-basis of $V_{2^{n-1}+s}$
as in Section~\ref{sec:key}. We choose $G$-invariant elements
$a \in S^{2^n}(V_{2^{n-1}+s})$ and
$\widetilde{a} \in \widetilde{S}^{2^n}(V_{2^{n-1}+s})$ as in
Section~\ref{sec:key}. Let $T(V_{2^{n-1}+s})$ be the $kG$-submodule of
$S(V_{2^{n-1}+s})$ spanned by the monomials in $x_1$, \dots, $x_{top}$
that are not divisible by $x_{top}^{2^n}$. We have
$S(V_{2^{n-1}+s}) \cong k[a] \otimes T(V_{2^{n-1}+s})$ as
$kG$-modules; see~\cite[Lemma~1.1]{scyclic}. So
$T^{<2^n}(V_{2^{n-1}+s}) = S^{<2^n}(V_{2^{n-1}+s})$. Notice that the
periodicity of $S(V_{2^{n-1}+s})$ in \cite[Theorem~1.2]{scyclic} is
equivalent to $T^{\geq 2^n}(V_{2^{n-1}+s})$ being induced. In fact,
we know something stronger from \cite[Corollary~3.11]{scyclic}, namely
that $T^{>2^{n-1}-s}(V_{2^{n-1}+s})$ is induced.

We can make the same construction for $\widetilde{S}(V_{2^{n-1}+s})$,
obtaining $\widetilde{S}(V_{2^{n-1}+s}) \hspace{-0.05cm}
\cong \hspace{-0.05cm} k[\widetilde{a}] \otimes
\widetilde{T}(V_{2^{n-1}+s})$ as $kG$-modules.

Define $L(V_{2^{n-1}+s}, V_s^2)$ to be the subcomplex of
$K(V_{2^{n-1}+s}, V_s^2)$ defined using $T(V_{2^{n-1}+s})$
instead of $S(V_{2^{n-1}+s})$, that is
\[
\dots \stackrel{d}{\longrightarrow} T(V_{2^{n-1}+s}) \otimes \Lambda^2(V_s)
\stackrel{d}{\longrightarrow} T(V_{2^{n-1}+s}) \otimes V_s
\stackrel{d}{\longrightarrow} T(V_{2^{n-1}+s}),
\]
where the boundary morphisms are as in Definition~\ref{def:koszul}
(this can be done since the $x_{top}$ used in the definition of
$T(V_{2^{n-1}+s})$ is not contained in $V_s$). Thus $L(V_{2^{n-1}+s},
V_s^2)$ is a complex of graded $kG$-modules; it is exact except in
degree 0, where the homology is $H_0(L(V_{2^{n-1}+s}, V_s^2))$, which
is isomorphic to  $\widetilde{T}(V_{2^{n-1}+s})$ as 
a $kG$-module. Notice that, by construction, the complexes
$K(V_{2^{n-1}+s}, V_s^2)$ and $k[a] \otimes L(V_{2^{n-1}+s}, V_s^2)$
of $kG$-modules are isomorphic. In particular, note for later use that
one of them is separated (over $G$ or over $H$) if and only if the
other is so too. 

From now on we fix $s$ and abbreviate the notation to just $S$, $T$,
$K$, $L$, etc.

Suppose that $s < 2^{n-1}$. We claim that $L^r_i = T^{r-2i} \otimes \Lambda^i(V_s)$ is induced for all
$i \ge 0$, $r \ge 2^n$. We may assume that $i \leq s$. Then
$r-2i \ge 2^n-2s > 2^{n-1}-s$ and so $T^{r-2i}$ is induced. Thus $L^r$
is a complex of induced $kG$-modules for each $r \ge 2^n$.

Consider the restriction of the complex $K$ to the subgroup $H$.
It decomposes as a tensor product of two complexes, by
Lemma~\ref{la:dirsum}. Each of these is separated, by induction and
Theorem~\ref{thm:main} (a), hence so is their product, by
Lemma~\ref{la:tensep}. It follows that for each $r \ge 0$, the complex
$L^r$ is separated on restriction to $H$. We have just seen that $L^r$
is a complex of induced modules for all $r \ge 2^n$. Thus, for each
$r \ge 2^n$, the complex $L^r$is separated, by
Proposition~\ref{prop:homol2}. Now Lemma~\ref{la:homol1} shows
that $H_0(L^{\geq 2^n})$ is induced. But this is exactly
$\widetilde{T}^{\geq 2^n}$, so $\widetilde{S}(V_{2^{n-1}+s}) \cong
k[\widetilde{a}] \otimes \widetilde{T} \cong_{\ind} k[\widetilde{a}]
\otimes \widetilde{T}^{<2^n} = k[\widetilde{a}] \otimes
\widetilde{S}^{<2^n}$ is periodic if $s < 2^{n-1}$.

Now suppose that $s = 2^{n-1}$. By the same argument as for
$s < 2^{n-1}$, we see that $\widetilde{T}^{>2^n}$ is induced. To
complete the proof of Theorem~\ref{thm:main} (b) we have to show
that $\widetilde{S}^{2^n} \cong_{\ind} k\widetilde{a} \oplus k
\widetilde{b}$. Set $y_i := g^{2^n-i}x_{2^n}$ for
$i=1,2,\dots,2^n$, so $\{y_1,\dots,y_{2^n}\}$ is a $k$-basis of
$V_{2^n}$ which is permuted by $G$. A basis for $V_{2^{n-1}} <
V_{2^n}$ is given by $g^{2^{n-1}}y_i - y_i=y_{i+2^{n-1}}-y_i$ for
$i=1,2,\dots,2^{n-1}$.  Write $\widetilde{y}_i$ 
for the image of $y_i$ in $\widetilde{S}(V_{2^n})$, so
$\widetilde{y}_i^2 = \widetilde{y}_{i+2^{n-1}}^2$ for
$i=1,\dots,2^{n-1}$. The set consisting of all monomials of degree
$2^n$ in all the $\widetilde{y}_i$ such that $\widetilde{y}_1$, \dots,
$\widetilde{y}_{2^{n-1}}$ only occur to the power at most 1 forms a $k$-basis
for $\widetilde{S}(V_{2^n})$. The group $G$ permutes these monomials
and it is straightforward to check that there are two
invariant monomials, namely
$\widetilde{y}_1 \widetilde{y}_2 \cdots \widetilde{y}_{2^n} =
\widetilde{b}$ and $\widetilde{y}_{2^{n-1}+1}^2
\widetilde{y}_{2^{n-1}+2}^2 \cdots \widetilde{y}_{2^n}^2 =
\widetilde{a}$; the rest span induced submodules. This completes the
proof of periodicity.

%%%%%%%%%%%%%%%%%%%%%%%%%%%%%%%%%%%%%%%%%%%%%%%%%%%%%%%%%%%%%%%%%%%%%%%%%%%%%%%%%%%%%%%%%%%%%%%%

\section{Splitting}
\label{sec:split_sm}

 In this section we
prove part (c) of Theorem~\ref{thm:main}, assuming the whole of the
theorem for smaller~$n$.

Let $H$ be the unique maximal subgroup of $G$ and
$\{x_1, x_2, \dots, x_{top}\}$ a $k$-basis of $V_{2^{n-1}+s}$ as in
Section~\ref{sec:key}. As in Theorem~\ref{thm:main} we write
$\widetilde{N}(V_{2^{n-1}+s})$ for the kernel of the natural surjection
$\widetilde{S}(V_{2^{n-1}+s}) \stackrel{f}{\rightarrow} \Lambda(V_{2^{n-1}+s})$.
The following proposition deals with the structure of
$\widetilde{S}(V_{2^{n-1}+s})$ in degrees less than $2^n$.

\begin{proposition} \label{prop:split_sm}
For any integer $s$ such that $0 \le s \le 2^{n-1}$, the short exact sequence
\begin{equation} \label{eq:splitseq_sm}
0 \longrightarrow \widetilde{N}^{<2^n}(V_{2^{n-1}+s}) \longrightarrow
\widetilde{S}^{<2^n}(V_{2^{n-1}+s}) \stackrel{f}{\longrightarrow} \Lambda^{<2^n}(V_{2^{n-1}+s})
\longrightarrow 0
\end{equation}
of graded $kG$-modules is split, and
$\widetilde{N}^{<2^n}(V_{2^{n-1}+s})$ is induced from $H$.
\end{proposition}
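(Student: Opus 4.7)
My plan is to reduce to $H$ by restriction, apply the inductive hypothesis on each tensor factor, and then lift the resulting splitting back to $G$.

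First, the restriction rules from Section~\ref{sec:cyc2} give $V_{2^{n-1}+s}\hspace{-0.1cm}\downarrow^G_H = V^1 \oplus V^2$ with each $V^a = V_{2^{n-2}+s_a}$, and $V_s\hspace{-0.1cm}\downarrow^G_H = W^1 \oplus W^2$ with $W^a = V_{s_a} \subseteq V^a$ (the parities of $2^{n-1}+s$ and $s$ match since $n \ge 2$). So, as graded $kH$-modules,
\[
\widetilde{S}(V_{2^{n-1}+s})\hspace{-0.1cm}\downarrow^G_H \;\cong\; \widetilde{S}_H(V^1,W^1)\otimes_k\widetilde{S}_H(V^2,W^2),
\]
and analogously for $\Lambda$, so the restriction of our short exact sequence is the (total) tensor product of the two factor sequences $0 \to \widetilde{N}_H(V^a,W^a) \to \widetilde{S}_H(V^a,W^a) \to \Lambda_H(V^a) \to 0$, to which Proposition~\ref{prop:split_sm} at level $n-1$ applies.

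Second, by the inductive hypothesis~(c), each factor sequence splits in degrees $<2^{n-1}$ with kernel induced from a proper subgroup of $H$; outside that range either $\Lambda_H^{r_a}(V^a)=0$ (so the factor sequence is trivially split with $\widetilde{N}=\widetilde{S}$) or $V^a=W^a$ (so the factor sequence is zero), and the higher-degree $\widetilde{S}_H^{r_a}(V^a)$ is described modulo induced summands via inductive periodicity~(b) at $n-1$. Tensoring these decompositions grading-by-grading, I obtain in every grading $R<2^n$ a $kH$-module decomposition of $\widetilde{S}^R(V_{2^{n-1}+s})\hspace{-0.1cm}\downarrow^G_H$ into a summand isomorphic to $\Lambda^R(V_{2^{n-1}+s})\hspace{-0.1cm}\downarrow^G_H$ and a kernel summand whose indecomposable constituents each contain at least one tensor factor $\widetilde{N}_H^{r_a}(V^a)$ from the inductive splittings.

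Third, to lift back to $G$, I would use the split-mod-induced machinery of Lemmas~\ref{la:prop_split} and~\ref{la:split_mod_ind}: the $H$-level splitting gives a $G$-equivariant splitting of the quotient modulo induced summands, and Lemma~\ref{la:split_mod_ind} then reduces a genuine $kG$-splitting to checking that its restriction to the non-induced summands of $\Lambda^{<2^n}(V_{2^{n-1}+s})$ is split-injective. A multiplicity comparison using the second-step decomposition and Lemma~\ref{la:modind} closes the argument and shows that $\widetilde{N}^{<2^n}(V_{2^{n-1}+s})$ is induced from $H$.

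The hardest part is the passage from $H$ back to $G$: a straightforward Lemma~\ref{la:resind} argument would show that those kernel summands induced from proper subgroups of $H$ on restriction lift to $kG$-modules induced from $H$, but residual pieces arising from the inductive boundary degree $r_a=2^{n-1}$ (for example when $V^a=V_{2^{n-1}}$ with $W^a$ proper) need not be induced from a proper subgroup of $H$ on restriction and may contain trivial $H$-summands. The key subtlety is that being induced from $H$ at the $G$-level is strictly weaker than being induced from a proper subgroup of $H$ on $H$-restriction, so these residual pieces can still combine into $kG$-summands induced from $H$; tracking this carefully is the main technical load.
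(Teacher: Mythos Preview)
Your outline is the natural first attempt, and you have correctly identified exactly where it breaks down; but you have not supplied the missing idea, so as it stands this is not a proof. The gap is real: the tensor-product decomposition of $\widetilde{N}^{<2^n}\hspace{-0.1cm}\downarrow^G_H$ contains summands of the form $\widetilde{a}'\cdot(\text{something})$ and $\widetilde{a}''\cdot(\text{something})$ coming from the periodicity generators at level $n-1$, and these are \emph{not} induced from proper subgroups of $H$, so Lemma~\ref{la:resind} does not apply to the whole kernel. Your third step (``the $H$-level splitting gives a $G$-equivariant splitting modulo induced summands'') is not justified: Lemmas~\ref{la:prop_split} and~\ref{la:split_mod_ind} concern split-injectivity of a given $kG$-map, they do not manufacture a $kG$-section out of an $H$-section, and Lemma~\ref{la:modind} presupposes you already know the two sides agree modulo induced, which is precisely what is at issue.

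The paper's resolution is to build, at the $G$-level, an explicit $kG$-submodule $\widetilde{J}\subseteq\widetilde{N}^{<2^n}$ that is visibly induced from $H$: one takes the $H$-periodicity generator $\widetilde{a}'\in\widetilde{S}'^{2^{n-1}}$ and its $g$-translate $g\widetilde{a}'$, and sets $\widetilde{J}=\widetilde{a}'\widetilde{S}^{<2^{n-1}}\oplus(g\widetilde{a}')\widetilde{S}^{<2^{n-1}}$. A leading-term argument in the variables $x_{top},x_{top-1}$ (Lemma~\ref{la:stsmalldeg}) shows this is a direct summand of $\widetilde{S}^{<2^n}\hspace{-0.1cm}\downarrow^G_H$, hence (by relative $H$-injectivity) a $kG$-direct summand. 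One then writes $\widetilde{N}^{<2^n}=\widetilde{J}\oplus\widetilde{J}'$ and compares $\widetilde{J}\hspace{-0.1cm}\downarrow^G_H$ with $\widetilde{N}^{<2^n}\hspace{-0.1cm}\downarrow^G_H$ modulo induced $kH$-summands; the inductive splitting and periodicity show they agree, so $\widetilde{J}'\hspace{-0.1cm}\downarrow^G_H$ \emph{is} induced from a proper subgroup of $H$, and now Lemma~\ref{la:resind} applies to $\widetilde{J}'$ alone. The point is that the ``bad'' non-induced-from-below piece of the $H$-kernel has been absorbed into a submodule that is already a $kG$-module induced from $H$ by construction; this is the step your outline is missing.
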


Before starting with the proof of Proposition~\ref{prop:split_sm} we
introduce some further notation. As described at the beginning of
Section~\ref{sec:cyc2}, we have
$V_{2^{n-1}+s} \hspace{-0.1cm} \downarrow^G_H = V_{2^{n-2}+s'} \oplus
V_{2^{n-2}+s''}$ where $0 \le s', s'' \le 2^{n-2}$ and $s'=s''$ or
$s' = s''+1$. The $kH$-submodule $V_{2^{n-2}+s'}$ of $V_{2^{n-1}+s}$
has the $k$-basis $\{x_{top}, x_{top-2}, x_{top-4}, \dots\}$ and the
$kH$-submodule $V_{2^{n-2}+s''}$ has the $k$-basis
$\{x_{top-1}, x_{top-3}, x_{top-5}, \dots\}$. We write
$\widetilde{S}'(V_{2^{n-2}+s'})$ and
$\widetilde{S}''(V_{2^{n-2}+s''})$ for $S(V_{2^{n-2}+s'})/(V_{s'}^2)$
and $S(V_{2^{n-2}+s''})/(V_{s''}^2)$, respectively. So the $x_i$ with
odd $i$ and the $x_i$ with even $i$ provide natural
embeddings $\widetilde{S}'(V_{2^{n-2}+s'}) \rightarrow
\widetilde{S}(V_{2^{n-1}+s})$ and $\widetilde{S}''(V_{2^{n-2}+s''})
\rightarrow \widetilde{S}(V_{2^{n-1}+s})$ of $kH$-modules, and we
have
\[
\widetilde{S}(V_{2^{n-1}+s}) \cong \widetilde{S}'(V_{2^{n-2}+s'})
\otimes \widetilde{S}''(V_{2^{n-2}+s''})
\]
as $kH$-modules, where the isomorphism is given by
$f_1 \otimes f_2 \mapsto f_1 \cdot f_2$.

Choose $a' \in S(V_{2^{n-2}+s'})$,
$\widetilde{a}' \in \widetilde{S}(V_{2^{n-2}+s'})$ according to the
description preceding Theorem~\ref{thm:main}, but working
over~$H$. Thus $a'$ is homogeneous of degree~$2^{n-1}$, has
degree~$2^{n-1}$ when considered as a polynomial in
$x_{top}$. Furthermore, $a'$ and $\widetilde{a}'$ are invariant under
the action of~$H$ and $\widetilde{a}'$ has image 0 in $\Lambda(V_{2^{n-2}+s'})$.
Similarly, choose $a'' \in S(V_{2^{n-2}+s''})$ and $\widetilde{a}'' \in
\widetilde{S}''(V_{2^{n-2}+s''})$. So $a''$ is homogeneous
of degree $2^{n-1}$, has degree~$2^{n-1}$ when considered
as a polynomial in $x_{top-1}$ and $a''$ and $\widetilde{a}''$ are
invariant under the action of~$H$ and $\widetilde{a}''$ has image 0 in
$\Lambda(V_{2^{n-2}+s''})$.

By induction, $\widetilde{a}'$ and $\widetilde{a}''$ are periodicity
generators of $\widetilde{S}'(V_{2^{n-2}+s'})$ and
$\widetilde{S}''(V_{2^{n-2}+s''})$, respectively. That is, we have
\[
\widetilde{S}'(V_{2^{n-2}+s'}) \cong_{\ind} k[\widetilde{a}'] \otimes
\widetilde{S}'^{<2^{n-1}}(V_{2^{n-2}+s''}) \quad \text{and} \quad
\widetilde{S}''(V_{2^{n-2}+s''}) \cong_{\ind} k[\widetilde{a}''] \otimes
\widetilde{S}''^{<2^{n-1}}(V_{2^{n-2}+s''})
\]
or the variant with $\widetilde{b}'$ or $\widetilde{b}''$ if
$s'=2^{n-2}$ or $s''=2^{n-2}$.

\begin{lemma} \label{la:stsmalldeg}
Let $s$ be an integer such that $0 \le s \le 2^{n-1}$ and
let $\widetilde{a}'$ be a periodicity generator for
$\widetilde{S}'(V_{2^{n-2}+s'})$ as above. Then
\[
{\widetilde{S}^{<2^n}(V_{2^{n-1}+s}) \hspace{-0.1cm} \downarrow^G_H} \, = \,
\widetilde{S}^{<2^{n-1}}(V_{2^{n-1}+s}) \oplus \widetilde{a}'
\widetilde{S}^{<2^{n-1}}(V_{2^{n-1}+s}) \oplus (g\widetilde{a}')
\widetilde{S}^{<2^{n-1}}(V_{2^{n-1}+s}) \oplus \widetilde{X}
\]
as $kH$-modules, where the $kH$-submodule $\widetilde{X}$ is generated
as a $k$-vector space by the images of all monomials
$x \in \bigoplus_{r=2^{n-1}}^{2^n-1} S^r(V_{2^{n-1}+s})$ such that $x$
has degree strictly less than~$2^{n-1}$ when considered as
a polynomial in $x_{top}$ and $x$ has degree strictly less
than~$2^{n-1}$ when considered as a polynomial in $x_{top-1}$.
\end{lemma}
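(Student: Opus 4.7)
The plan is to first establish an intermediate decomposition with $g\widetilde{a}'$ replaced by $\widetilde{a}''$---the $V''$-analogue of $\widetilde{a}'$, an $H$-invariant polynomial in $x_{top-1}, x_{top-3}, \ldots$ of degree $2^{n-1}$ with leading term $x_{top-1}^{2^{n-1}}$---arising from the tensor-product structure of $\widetilde{S}(V_{2^{n-1}+s}) \downarrow^G_H$. Then swap $\widetilde{a}''$ for $g\widetilde{a}'$ via an explicit identity $g\widetilde{a}' = \widetilde{a}' + \widetilde{a}'' + \xi$ with $\xi \in \widetilde{X}$, followed by a nilpotent iteration that terminates because the resulting correction operator strictly decreases the $x_{top-1}$-degree.

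Applying the construction of Section~\ref{sec:per} to $H$ in place of $G$ gives actual $kH$-module isomorphisms $\widetilde{S}'(V') \cong k[\widetilde{a}'] \otimes \widetilde{T}'(V')$ and $\widetilde{S}''(V'') \cong k[\widetilde{a}''] \otimes \widetilde{T}''(V'')$, where $\widetilde{T}'$ (resp.\ $\widetilde{T}''$) is the span of monomials with $d_{top} < 2^{n-1}$ (resp.\ $d_{top-1} < 2^{n-1}$). Setting $R := \widetilde{T}' \cdot \widetilde{T}''$, the span of monomials of $\widetilde{S}(V_{2^{n-1}+s})$ with both $d_{top}, d_{top-1} < 2^{n-1}$, and truncating the tensor decomposition at total degree $<2^n$---only $(j_1, j_2) \in \{(0,0), (1,0), (0,1)\}$ contribute, since $j_1 + j_2 \geq 2$ forces degree at least $2^n$---yields
\[
\widetilde{S}^{<2^n}(V_{2^{n-1}+s}) \downarrow^G_H \;=\; R^{<2^n} \,\oplus\, \widetilde{a}' R^{<2^{n-1}} \,\oplus\, \widetilde{a}'' R^{<2^{n-1}}.
\]
Any monomial of total degree $<2^{n-1}$ automatically has $d_{top}, d_{top-1} < 2^{n-1}$, so $R^{<2^{n-1}} = \widetilde{S}^{<2^{n-1}}$; and by the very definition of $\widetilde{X}$, $R^{<2^n} = \widetilde{S}^{<2^{n-1}} \oplus \widetilde{X}$. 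This gives the intermediate decomposition
\[
\widetilde{S}^{<2^n} \downarrow^G_H \;=\; \widetilde{S}^{<2^{n-1}} \,\oplus\, \widetilde{X} \,\oplus\, \widetilde{a}' \widetilde{S}^{<2^{n-1}} \,\oplus\, \widetilde{a}'' \widetilde{S}^{<2^{n-1}}. \quad (\ast)
\]

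Computing $g\widetilde{a}'$ via the substitution $x_{top-2i} \mapsto x_{top-2i} + x_{top-2i-1}$ in $\widetilde{a}'$, the leading monomial $x_{top}^{2^{n-1}}$ becomes $(x_{top} + x_{top-1})^{2^{n-1}} = x_{top}^{2^{n-1}} + x_{top-1}^{2^{n-1}}$ via the Frobenius in characteristic~2, while every other monomial of $\widetilde{a}'$ has $d_{top} < 2^{n-1}$ and so yields an image with both $d_{top}, d_{top-1} < 2^{n-1}$. Cancelling $x_{top}^{2^{n-1}}$ against $\widetilde{a}'$ and $x_{top-1}^{2^{n-1}}$ against $\widetilde{a}''$ shows $\xi := g\widetilde{a}' + \widetilde{a}' + \widetilde{a}'' \in \widetilde{X}^{2^{n-1}}$, so $g\widetilde{a}' = \widetilde{a}' + \widetilde{a}'' + \xi$. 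For any $f \in \widetilde{S}^{<2^{n-1}}$ this gives $\widetilde{a}'' f = g\widetilde{a}' f + \widetilde{a}' f + \xi f$, placing the first two terms in the lemma's right-hand side; decomposing the correction $\xi f$ via $(\ast)$ as $a_0 + a_1 + \widetilde{a}' b + \widetilde{a}'' c$ leaves a residual $\widetilde{a}'' c$. Comparing the highest $x_{top-1}$-degree on each side forces $\deg_{x_{top-1}}(c) \leq \deg_{x_{top-1}}(f) - 1$, because $\xi$ has $d_{top-1} < 2^{n-1}$ while $\widetilde{a}''$ has leading $x_{top-1}$-degree exactly $2^{n-1}$. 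Hence $T \colon f \mapsto c$ strictly decreases $x_{top-1}$-degree and is nilpotent on $\widetilde{S}^{<2^{n-1}}$; iterating $\widetilde{a}'' f \equiv \widetilde{a}'' T(f) \pmod{\text{RHS}}$ terminates, placing $\widetilde{a}'' \widetilde{S}^{<2^{n-1}}$ inside the RHS, so by $(\ast)$ the RHS equals $\widetilde{S}^{<2^n}$. Injectivity of multiplication by $\widetilde{a}'$ (monic in $x_{top}$) and by $g\widetilde{a}'$ (monic in $x_{top-1}$) gives each of the four pieces its predicted dimension, forcing the sum to be direct. The main obstacle is verifying the strict $x_{top-1}$-degree bound that makes $T$ nilpotent; this hinges on the specific fact that $\xi$ lies in $\widetilde{X}^{2^{n-1}}$ rather than being a generic element of $\widetilde{S}^{2^{n-1}}$.
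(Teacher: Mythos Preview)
Your argument is correct, but the paper's proof is considerably shorter and more direct. The paper fixes the lexicographic monomial order with $x_{top-1} > x_{top} > x_{top-2} > \cdots > x_1$, observes that $g\widetilde{a}'$ has leading term $x_{top-1}^{2^{n-1}}$ while $\widetilde{a}'$ has leading term $x_{top}^{2^{n-1}}$ and involves only $x_{top}, x_{top-2}, x_{top-4}, \ldots$, and then performs two successive polynomial divisions: first by $g\widetilde{a}'$, then by $\widetilde{a}'$. This immediately writes any $h$ of degree in $[2^{n-1},2^n-1]$ as $h_1\,g\widetilde{a}' + h_3\,\widetilde{a}' + h_4$ with $h_1,h_3\in\widetilde{S}^{<2^{n-1}}$ and $h_4\in\widetilde{X}$; a dimension count then forces directness.

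Your route instead exploits the tensor structure $\widetilde{S}\downarrow^G_H \cong \widetilde{S}'\otimes\widetilde{S}''$ to obtain the intermediate decomposition $(\ast)$ with $\widetilde{a}''$ in place of $g\widetilde{a}'$, and only then swaps via the identity $g\widetilde{a}' = \widetilde{a}' + \widetilde{a}'' + \xi$ and an iterative correction. This is more conceptual and makes the link to the $\widetilde{T}'$, $\widetilde{T}''$ decompositions explicit, but it costs you the computation of $\xi$ and the nilpotence argument for $T$. One small wording issue: $g\widetilde{a}'$ is not purely a polynomial in $x_{top-1}$, but it \emph{is} monic of degree $2^{n-1}$ when viewed as a polynomial in $x_{top-1}$ over the subring generated by the remaining variables (the only monomial with $x_{top-1}^{2^{n-1}}$ is that one), so your injectivity claim stands. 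Alternatively, injectivity of multiplication by $g\widetilde{a}'$ follows at once from injectivity for $\widetilde{a}'$ by conjugating with $g$.
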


\begin{proof}
We give all monomials in $S(V_{2^{n-1}+s})$ the lexicographic order with
$x_{top-1} > x_{top} > x_{top-2} > \dots > x_1$. Let
$h \in \bigoplus_{r=2^{n-1}}^{2^n-1} S^r(V_{2^{n-1}+s})$. Since
$g\widetilde{a}'$ has leading term $x_{top-1}^{2^{n-1}}$ we can write 
$h$ as $h = h_1 \cdot g\widetilde{a}' + h_2$ where 
$h_1 \in \widetilde{S}^{<2^{n-1}}(V_{2^{n-1}+s})$ and 
$h_2 \in \widetilde{S}^{<2^n}(V_{2^{n-1}+s})$ such that $h_2$ has
degree $<2^{n-1}$ when considered as a polynomial in~$x_{top-1}$.
Then, because $\widetilde{a}'$ has leading term $x_{top}^{2^{n-1}}$
and only involves monomials in $x_{top}$, $x_{top-2}$, $x_{top-4}$,
\dots, we can find $h_3 \in \widetilde{S}^{<2^{n-1}}(V_{2^{n-1}+s})$
and $h_4 \in \widetilde{X}$ such that 
$h_2 = h_3 \cdot \widetilde{a}' + h_4$. Thus,
$\bigoplus_{r=2^{n-1}}^{2^n-1} S^r(V_{2^{n-1}+s})$ is the sum of
$\widetilde{a}'\widetilde{S}^{<2^{n-1}}(V_{2^{n-1}+s})$, 
$(g\widetilde{a}')\widetilde{S}^{<2^{n-1}}(V_{2^{n-1}+s})$ and 
$\widetilde{X}$. Comparing dimensions, we see that this sum has to be
direct and Lemma~\ref{la:stsmalldeg} follows.
\end{proof}

We are now ready to prove Proposition~\ref{prop:split_sm}.

\begin{proof} (of Proposition~\ref{prop:split_sm})
We study the restriction of the sequence (\ref{eq:splitseq_sm}) to the
maximal subgroup~$H$. By Lemma~\ref{la:stsmalldeg}, the middle term is
\[
{\widetilde{S}^{<2^n}(V_{2^{n-1}+s}) \hspace{-0.1cm} \downarrow^G_H} =
  \widetilde{S}^{<2^{n-1}}(V_{2^{n-1}+s}) \oplus \widetilde{J} \oplus \widetilde{X},
\]
where $\widetilde{J} := \widetilde{a}' \widetilde{S}^{<2^{n-1}}(V_{2^{n-1}+s})
\oplus (g\widetilde{a}') \widetilde{S}^{<2^{n-1}}(V_{2^{n-1}+s})$.
Owing to the choice of $\widetilde{a}'$, we have
$\widetilde{a}' \in \widetilde{N}^{<2^n}(V_{2^{n-1}+s})$ and therefore
$\widetilde{J} \subseteq \widetilde{N}^{<2^n}(V_{2^{n-1}+s})$.
In fact, by construction, $\widetilde{J}$ is a $kG$-submodule of
$\bigoplus_{r=2^{n-1}}^{2^n-1}\widetilde{N}^r(V_{2^{n-1}+s})$ and is
induced from $H$. We consider the exact sequence of $kG$-modules
\begin{equation} \label{eq:splitJ}
0 \longrightarrow \widetilde{J} \longrightarrow
\bigoplus_{r=2^{n-1}}^{2^n-1}\widetilde{S}^r(V_{2^{n-1}+s})
\longrightarrow \overline{X} \longrightarrow 0,
\end{equation}
where $\overline{X} :=
\bigoplus_{r=2^{n-1}}^{2^n-1}\widetilde{S}^r(V_{2^{n-1}+s}) /
\widetilde{J}$. We know from Lemma~\ref{la:stsmalldeg} that the
sequence~(\ref{eq:splitJ}) is split when restricted to $H$ and
${\overline{X} \hspace{-0.1cm} \downarrow^G_H} \cong  \widetilde{X}$
as $kH$-modules. Since $\widetilde{J}$ is induced from $H$ it is
relatively $H$-injective, and so the sequence (\ref{eq:splitJ}) splits
over $kG$ (see~\cite[Theorem (19.2)]{cr}). Thus $\widetilde{J}$ is a
direct summand of $\widetilde{S}^{<2^n}(V_{2^{n-1}+s})$ over $kG$, so
there is a $kG$-submodule $\widetilde{J}''$ of
$\widetilde{S}^{<2^n}(V_{2^{n-1}+s})$ such that
$\widetilde{S}^{<2^n}(V_{2^{n-1}+s}) = \widetilde{J} \oplus
\widetilde{J}''$. Since $\widetilde{J} \subseteq
\widetilde{N}^{<2^n}(V_{2^{n-1}+s})$, it follows that
$\widetilde{N}^{<2^n}(V_{2^{n-1}+s}) = \widetilde{J} \oplus
\widetilde{J}'$, where $\widetilde{J}' := \widetilde{J}'' \cap
\widetilde{N}^{<2^n}(V_{2^{n-1}+s})$. We have
\[
{\widetilde{J} \hspace{-0.1cm} \downarrow^G_H} = \widetilde{a}'
{\widetilde{S}^{<2^{n-1}}(V_{2^{n-1}+s}) \hspace{-0.1cm} \downarrow^G_H}
\oplus (g\widetilde{a}') {\widetilde{S}^{<2^{n-1}}(V_{2^{n-1}+s}) \hspace{-0.1cm}
\downarrow^G_H} .
\]
Because ${\widetilde{S}^{<2^{n-1}}(V_{2^{n-1}+s}) \hspace{-0.1cm}
\downarrow^G_H} \cong (\widetilde{S}'(V_{2^{n-2}+s'}) \otimes
\widetilde{S}''(V_{2^{n-2}+s''}))^{<2^{n-1}}$, we have, by induction
and ignoring the grading,
\begin{equation} \label{eq:resJ}
{\widetilde{J} \hspace{-0.1cm} \downarrow^G_H} \cong_{\ind}
(\Lambda' \otimes \Lambda'')^{<2^{n-1}} \oplus (\Lambda' \otimes
\Lambda'')^{<2^{n-1}}.
\end{equation}
Here we write $\Lambda' \otimes \Lambda''$ for the graded $kH$-module
$\Lambda(V_{2^{n-2}+s'}) \otimes \Lambda(V_{2^{n-2}+s''})$. Restricting
the sequence (\ref{eq:splitseq_sm}) to $H$, we obtain the sequence
\[
0 \rightarrow {\widetilde{N}^{<2^n}(V_{2^{n-1}+s}) \hspace{-0.1cm}
  \downarrow^G_H} \rightarrow (\widetilde{S}'(V_{2^{n-2}+s'}) \otimes
\widetilde{S}''(V_{2^{n-2}+s''}))^{<2^n} \rightarrow (\Lambda'
\otimes \Lambda'')^{<2^n} \rightarrow 0
\]
of $kH$-modules, which is split by induction. Thus, by induction
again, we obtain
\begin{equation} \label{eq:resK}
{\widetilde{N}^{<2^n}(V_{2^{n-1}+s}) \hspace{-0.1cm} \downarrow^G_H}
\cong_{\ind} \widetilde{a}'(\Lambda' \otimes \Lambda'')^{<2^{n-1}} \oplus
\widetilde{a}''(\Lambda' \otimes \Lambda'')^{<2^{n-1}}.
\end{equation}
Equations (\ref{eq:resJ}) and (\ref{eq:resK}) imply that
${\widetilde{J} \hspace{-0.1cm} \downarrow^G_H} \oplus
{\widetilde{J}' \hspace{-0.1cm} \downarrow^G_H} \cong
({\widetilde{J} \oplus \widetilde{J}') \hspace{-0.1cm} \downarrow^G_H} \cong
{\widetilde{N}^{<2^n}(V_{2^{n-1}+s}) \hspace{-0.1cm} \downarrow^G_H} \cong_{\ind}
{\widetilde{J} \hspace{-0.1cm} \downarrow^G_H}$. It follows that
${\widetilde{J}' \hspace{-0.1cm} \downarrow^G_H}$ is induced from
proper subgroups of $H$. By Lemma~\ref{la:resind} the $kG$-module
$\widetilde{J}'$ is induced from $H$, and hence
$\widetilde{N}^{<2^n}(V_{2^{n-1}+s}) = \widetilde{J} \oplus
\widetilde{J}'$ is induced from $H$. We have just seen that sequence
(\ref{eq:splitseq_sm}) is split on restriction to $H$;
since $\widetilde{N}^{<2^n}(V_{2^{n-1}+s})$ is relatively
$H$-injective the sequence must split over $kG$. This completes the
proof of Proposition~\ref{prop:split_sm}.
\end{proof}

The following corollary provides a connection between
$\widetilde{S}(V_{2^{n-1}+s})$ and the exterior powers of
$V_{2^{n-1}+s}$ in degrees less than $2^n$.

\begin{corollary} \label{cor:structSt_sm}
For $r$ and $s$ integers such that
$0 \le s \le 2^{n-1}$ and $0 \le r < 2^n$, the map $f$ induces an
isomorphism of $kG$-modules modulo induced summands
\[
\widetilde{S}^r(V_{2^{n-1}+s}) \cong_{\ind} \Lambda^r(V_{2^{n-1}+s}).
\]
\end{corollary}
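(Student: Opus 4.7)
The plan is to derive this essentially as an immediate consequence of Proposition~\ref{prop:split_sm}. Since that proposition already gives us the splitting of the truncated short exact sequence together with the fact that the kernel is induced, the corollary should just involve reading off one graded piece.

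More concretely, I would first note that Proposition~\ref{prop:split_sm} gives a $kG$-module splitting of
\[
0 \longrightarrow \widetilde{N}^{<2^n}(V_{2^{n-1}+s}) \longrightarrow
\widetilde{S}^{<2^n}(V_{2^{n-1}+s}) \stackrel{f}{\longrightarrow} \Lambda^{<2^n}(V_{2^{n-1}+s})
\longrightarrow 0,
\]
and that the splitting map and the sequence itself respect the internal grading by $r$ (since $f$ is graded and the submodule $\widetilde{N}$ is graded). Therefore, restricting to grading-degree $r$ with $0 \le r < 2^n$, I obtain a direct sum decomposition of $kG$-modules
\[
\widetilde{S}^r(V_{2^{n-1}+s}) \cong \Lambda^r(V_{2^{n-1}+s}) \oplus \widetilde{N}^r(V_{2^{n-1}+s}).
\]

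Next, since $\widetilde{N}^{<2^n}(V_{2^{n-1}+s})$ is induced from $H$ by Proposition~\ref{prop:split_sm}, so is each of its graded summands $\widetilde{N}^r(V_{2^{n-1}+s})$ for $0 \le r < 2^n$ (direct summands of induced modules are induced, since the indecomposables appearing are among those of an induced module, all of even dimension and hence induced by the description in Section~\ref{sec:cyc2}). Consequently the difference $\widetilde{S}^r(V_{2^{n-1}+s}) - \Lambda^r(V_{2^{n-1}+s})$ in $a(G)$ lies in $a_I(G)$, which gives the desired congruence $\widetilde{S}^r(V_{2^{n-1}+s}) \cong_{\ind} \Lambda^r(V_{2^{n-1}+s})$.

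Since the heavy lifting has already been done in Proposition~\ref{prop:split_sm}, I do not anticipate any obstacle here; the only point worth a sentence of verification is that Proposition~\ref{prop:split_sm} is compatible with the grading, so that ``induced in total degree $<2^n$'' really does imply ``induced in each degree $r<2^n$ separately''. This is immediate from the construction, since $f$ and the inclusion of $\widetilde{N}$ are both homogeneous of degree $0$.
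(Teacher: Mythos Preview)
Your proposal is correct and follows essentially the same approach as the paper, which simply says the result is clear from Proposition~\ref{prop:split_sm}. The only minor omission is that Proposition~\ref{prop:split_sm} sits inside the inductive step where $n>1$ is a standing assumption, so the paper also cites Section~\ref{sec:n=1} to cover the base case $n=1$; you should add that reference if the corollary is meant to stand for all $n\ge 1$.
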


\begin{proof}
This is clear from Proposition~\ref{prop:split_sm} (for $n > 1$) and
Section~\ref{sec:n=1} (for $n=1$).
\end{proof}

We can now prove Theorem~\ref{thm:main} (c). For $s<2^{n-1}$
we have $\widetilde{S}^{<2^n}(V_{2^{n-1}+s}) \cong
\Lambda(V_{2^{n-1}+s}) \oplus X$, where $X$ is induced, so part (c) of
Theorem~\ref{thm:main} follows from part (b). For $s=2^{n-1}$ we have
$\widetilde{S}^{<2^n}(V_{2^{n-1}+s}) \oplus k\widetilde{b} \cong
\Lambda(V_{2^{n-1}+s}) \oplus X'$, where $X'$ is induced. Note that
$\widetilde{b}$ maps to a generator of $\Lambda^{2^n}(V_{2^{n-1}+s})$.
Again, part (c) of Theorem~\ref{thm:main} is a consequence of (b).

%%%%%%%%%%%%%%%%%%%%%%%%%%%%%%%%%%%%%%%%%%%%%%%%%%%%%%%%%%%%%%%%%%%%%%%%%%%%%%%%

\section{Preparation for Separation}
\label{sec:sep1}

In this section we prepare for
the proof of part (a) of Theorem~\ref{thm:main}, assuming the whole of
the theorem for smaller $n$.

Let $H$ be the unique maximal subgroup of $G$ and
let $\{x_1, x_2, \dots, x_{top}\}$ be a $k$-basis of $V_{2^{n-1}+s}$ as in
Section~\ref{sec:key}. The main goal of this section is to develop
useful criteria for the complex $K(V_{2^{n-1}+s}, V_s^2)$ to be
separated.

\begin{lemma} \label{la:sep_in_posdeg}
Let $r$, $s$ be non-negative integers such that
$0 \le s \le 2^{n-1}$. Suppose that for each $0 \le r' < r$ with
$r' \equiv r$ mod $2$, the complex $K^{r'}(V_{2^{n-1}+s}, V_s^2)$ is
separated at $K_0^{r'}(V_{2^{n-1}+s}, V_s^2) = S^{r'}(V_{2^{n-1}+s})$.
Then $K^r(V_{2^{n-1}+s}, V_s^2)$ is separated at
$K_i^r(V_{2^{n-1}+s}, V_s^2) = S^{r-2i}(V_{2^{n-1}+s}) \otimes \Lambda^i(V_s)$
for all $i \ge 1$.
The same is true when $K$ and $S$ are replaced by $L$ and $T$ from
Section~\ref{sec:per}. 
\end{lemma}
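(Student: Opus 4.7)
The plan is to show that each $d_{i+1}\colon K^r_{i+1}\to K^r_i$ factors, up to the identity on $\Lambda^i(V_s)$, through the degree-$1$ Koszul differential $d_1^{(r-2i)}\colon K^{r-2i}_1\to K^{r-2i}_0$, and then to transport the projective factorization supplied by the hypothesis. To this end, I would introduce the natural $kG$-linear contraction
\[
\beta\colon \Lambda^{i+1}(V_s)\longrightarrow V_s\otimes \Lambda^i(V_s),\qquad w_1\wedge\cdots\wedge w_{i+1}\longmapsto \sum_{j=1}^{i+1} w_j\otimes\bigl(w_1\wedge\cdots\widehat{w_j}\cdots\wedge w_{i+1}\bigr),
\]
which is well defined and sign-free in characteristic $2$. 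Using the identifications $K^r_{i+1}=S^{r-2i-2}(V_{2^{n-1}+s})\otimes\Lambda^{i+1}(V_s)$ and $K^r_i=K^{r-2i}_0\otimes\Lambda^i(V_s)$, an elementwise check against Definition~\ref{def:koszul} gives
\[
d_{i+1}=\bigl(d_1^{(r-2i)}\otimes\Id_{\Lambda^i(V_s)}\bigr)\circ\bigl(\Id_{S^{r-2i-2}(V_{2^{n-1}+s})}\otimes\beta\bigr).
\]

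Consequently $\Ima(d_{i+1})\subseteq \Ima(d_1^{(r-2i)})\otimes\Lambda^i(V_s)$. For $i\ge 1$ we have $r':=r-2i<r$ and $r'\equiv r\pmod 2$, so by hypothesis there is a projective $kG$-submodule $P\subseteq K^{r-2i}_0$ with $\Ima(d_1^{(r-2i)})\subseteq P$; the case $r-2i<0$ is vacuous, since then both $K^r_{i+1}$ and $K^{r-2i}_0$ vanish. Since $G$ is finite and $k$ is a field, the standard diagonal-action argument shows that $P\otimes_k\Lambda^i(V_s)$ is projective as a $kG$-module. It sits inside $K^r_i=K_0^{r-2i}\otimes \Lambda^i(V_s)$ and contains $\Ima(d_{i+1})$, producing exactly the projective factorization required for separation at $K^r_i$.

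For the $L,T$-variant the argument carries over verbatim: one has $L^r_i=T^{r-2i}\otimes \Lambda^i(V_s)$, and the Koszul differential inherited from $K$ involves only multiplication by squares of elements of $V_s\subseteq \langle x_1,\ldots,x_s\rangle$, which preserves $T$ (no factor of $x_{top}^{2^n}$ is introduced). The only substantive step is the factorization of $d_{i+1}$, which is routine Koszul bookkeeping and presents no real obstacle; the projectivity of $P\otimes\Lambda^i(V_s)$ is standard.
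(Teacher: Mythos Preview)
Your argument is correct and is essentially the paper's own proof. The paper observes directly that $\Ima(d_{i+1})\subseteq (W^2)^{r-2i}\otimes\Lambda^i(W)$, which is exactly your containment $\Ima(d_{i+1})\subseteq\Ima(d_1^{(r-2i)})\otimes\Lambda^i(V_s)$, and then tensors the projective factorization of $(W^2)^{r-2i}\hookrightarrow S^{r-2i}(V)$ with $\Lambda^i(W)$; your explicit comultiplication $\beta$ just makes the same inclusion visible as an honest factorization of $d_{i+1}$.
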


\begin{proof}
We only demonstrate the proof for $K$ and $S$; the proof for $L$ and
$T$ is analogous.

We write $V := V_{2^{n-1}+s}$ and $W := V_s$ for short. Fix $i \ge 1$
and consider the boundary morphism
$d_{i+1}: S^{r-2i-2}(V) \otimes \Lambda^{i+1}(W) \rightarrow
S^{r-2i}(V) \otimes \Lambda^i(W)$ in $K^r(V, W^2)$. We have to show
that $\Ima(d_{i+1}) \rightarrow S^{r-2i}(V) \otimes
\Lambda^i(W)$ factors through a projective $kG$-module.
 Since $K^{r-2i}(V, W^2)$ is separated
at $K_0^{r-2i}(V, W^2)$ the inclusion
$(W^2)^{r-2i} \rightarrow S^{r-2i}(V)$ factors through a projective
$kG$-module~$P^{r-2i}$. We can write the inclusion
$\Ima(d_{i+1}) \rightarrow S^{r-2i}(V) \otimes \Lambda^i(W)$ as a
composition of inclusions
\[
\Ima(d_{i+1}) \rightarrow (W^2)^{r-2i} \otimes \Lambda^i(W)
\rightarrow S^{r-2i}(V) \otimes \Lambda^i(W),
\]
where the last map factors through the projective $kG$-module
$P^{r-2i} \otimes \Lambda^i(W)$.
\end{proof}

\begin{lemma}
\label{la:sep_equiv}
Let $s$ and $r$ be integers such that
$0 \le s \le 2^{n-1}$ and $0 < r < 2^n$, and suppose that
the complex $K^i(V_{2^{n-1}+s}, V_s^2)$ is separated for all
$0 \le i < r$. Then the following statements are equivalent:
\begin{enumerate}
\item[(a)] $K^r(V_{2^{n-1}+s}, V_s^2)$ is separated,

\item[(b)] The natural map
$S^r(V_{2^{n-1}+s}) \stackrel{g}{\longrightarrow} \Lambda ^r (V_{2^{n-1}+s})$
is split injective modulo induced summands,

\item[(c)] $\Lambda^r(V_{2^{n-1}+s}) \cong _{\ind}
\bigoplus_{\genfrac{}{}{0pt}{}{i,j \ge 0}{2i+j=r}} \Omega_{2^n}^{i+j}
(\Lambda^i(V_s) \otimes \Lambda^j(V_{2^{n-1}-s}))$.
\end{enumerate}
\end{lemma}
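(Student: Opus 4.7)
The plan is to establish $(a) \Leftrightarrow (c)$ via the cycle $(a) \Rightarrow (c) \Rightarrow (a)$, and then handle $(a) \Leftrightarrow (b)$ separately. Throughout, Lemma~\ref{la:sep_in_posdeg} combined with the standing hypothesis reduces separation of $K^r$ to separation at its bottom module $K_0^r = S^r(V_{2^{n-1}+s})$, i.e., to the statement that the inclusion $(V_s^2 \cdot S^{r-2}(V_{2^{n-1}+s}))^r \hookrightarrow S^r(V_{2^{n-1}+s})$ factors through a projective submodule. I write $\alpha\colon S^r \twoheadrightarrow \widetilde{S}^r$ for the natural quotient (with kernel this ideal piece) and $\beta\colon \widetilde{S}^r \twoheadrightarrow \Lambda^r$ for the further quotient, whose kernel $\widetilde{N}^r$ is induced and splits off by Proposition~\ref{prop:split_sm}; thus $g = \beta \circ \alpha$.

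For $(a) \Rightarrow (c)$: Apply Lemma~\ref{la:homol1} to the acyclic (Lemma~\ref{la:KVWhom}) separated complex $K^r$, obtaining
\[
\widetilde{S}^r(V_{2^{n-1}+s}) \cong_{\proj} \bigoplus_{i \ge 0} \Omega_{2^n}^{-i} \bigl( S^{r-2i}(V_{2^{n-1}+s}) \otimes \Lambda^i(V_s) \bigr).
\]
Since $r < 2^n$, Theorem~\ref{sym} gives $S^{r-2i}(V_{2^{n-1}+s}) \cong_{\ind} \Omega_{2^n}^{r-2i} \Lambda^{r-2i}(V_{2^{n-1}-s})$; substituting and using $\Omega_{2^n}^2 \cong_{\proj} \Id$ to reduce the exponent $r-3i$ to $r-i = i+j$ (where $j = r - 2i$), combined with Corollary~\ref{cor:structSt_sm} which gives $\widetilde{S}^r \cong_{\ind} \Lambda^r$, yields the formula in (c).

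For $(c) \Rightarrow (a)$: I apply Lemma~\ref{la:looks_sep} to the short exact sequence $0 \to (V_s^2 \cdot S^{r-2})^r \to S^r \xrightarrow{\alpha} \widetilde{S}^r \to 0$. Separation on restriction to $H$ is supplied by the inductive hypothesis together with Lemmas~\ref{la:dirsum}, \ref{la:tensep} and \ref{la:sep_in_posdeg}. For the remaining hypothesis $\widetilde{S}^r \cong_{\ind} S^r \oplus \Omega_{2^n}^{-1}\ker \alpha$, I compute $\ker \alpha$ modulo induced summands via the truncated Koszul complex $K_w^r \to \cdots \to K_1^r$, which is acyclic with $H_0 = \ker \alpha$ and is separated by Lemma~\ref{la:sep_in_posdeg} applied to the inductive hypothesis. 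Lemma~\ref{la:homol1} then gives $\ker \alpha \cong_{\proj} \bigoplus_{i \ge 1} \Omega_{2^n}^{1-i}(S^{r-2i} \otimes \Lambda^i(V_s))$; the same substitution and simplification as in the previous paragraph yields $\ker \alpha \cong_{\ind} \Omega_{2^n} M$, where $M = \bigoplus_{i \ge 1} \Omega_{2^n}^{i+j}(\Lambda^i(V_s) \otimes \Lambda^j(V_{2^{n-1}-s}))$. Using $\Omega_{2^n}^{-1}\Omega_{2^n} \cong_{\proj} \Id$, Corollary~\ref{cor:structSt_sm}, and the identification of the $i = 0$ term in (c) with $S^r$ via Theorem~\ref{sym}, statement (c) becomes $\widetilde{S}^r \cong_{\ind} S^r \oplus M \cong_{\ind} S^r \oplus \Omega_{2^n}^{-1}\ker \alpha$, exactly the required hypothesis.

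For $(a) \Leftrightarrow (b)$: Since $\widetilde{N}^r$ is induced and $\beta$ splits, the factorization $g = \beta \circ \alpha$ implies that $g$ is split injective modulo induced summands if and only if $\alpha$ is so (one direction is trivial; the other uses the section of $\beta$ to modify the auxiliary map). By Lemma~\ref{la:split_mod_ind}, the question reduces to analyzing $\alpha$ on the non-induced part $(S^r)'$. Under (a) we get a decomposition $S^r = P \oplus C$ with $\ker \alpha \subseteq P$ projective, inducing $\widetilde{S}^r = (P/\ker\alpha) \oplus C$; Krull--Schmidt applied to the two decompositions $\widetilde{S}^r = \Lambda^r \oplus \widetilde{N}^r$ and $\widetilde{S}^r = (P/\ker\alpha) \oplus C$ then identifies $(S^r)' \subseteq C$ with a summand of $\Lambda^r$. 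The main obstacle is this last reconciliation step: upgrading the two abstract Krull--Schmidt decompositions of $\widetilde{S}^r$ to the statement that the specific natural map $g$ realizes the splitting, rather than merely producing some splitting.
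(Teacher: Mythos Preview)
Your treatment of $(a) \Leftrightarrow (c)$ is essentially the paper's argument: the computation of $\operatorname{Im}(d_1)$ (your $\ker\alpha$) via Lemma~\ref{la:homol1} on the truncated complex, the substitution from Theorem~\ref{sym}, the use of Corollary~\ref{cor:structSt_sm}, and the appeal to Lemma~\ref{la:looks_sep} are exactly what the paper does. The paper happens to route the cycle through $(b)$, but the content is the same.

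The gap you flag in $(a)\Rightarrow(b)$ is not a real obstacle, and the Krull--Schmidt reconciliation you propose is not needed. Once $S^r = P \oplus C$ with $P$ projective and $\ker\alpha \subseteq P$, you have $\widetilde{S}^r = \alpha(P) \oplus \alpha(C)$ and $\alpha|_C$ is injective; define $u\colon \widetilde{S}^r \to S^r$ by $\alpha(p)+\alpha(c)\mapsto c$, let $j'\colon S^r \to P$ be the projection and $u'\colon P \hookrightarrow S^r$ the inclusion. Then $u\circ\alpha + u'\circ j' = \operatorname{Id}_{S^r}$, which is precisely the condition of Definition~\ref{def:splitmi} for $\alpha$ with the projective $P$ as the auxiliary induced module. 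No comparison of abstract decompositions of $\widetilde{S}^r$ is required; the specific map $\alpha$ is shown directly to be split injective modulo induced summands, and then so is $g=\beta\circ\alpha$ by Proposition~\ref{prop:split_sm} and Lemma~\ref{la:prop_split}(a).

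The more serious problem is that you never prove $(b)\Rightarrow(a)$ (or $(b)\Rightarrow(c)$): your paragraph on $(a)\Leftrightarrow(b)$ only argues under the hypothesis~(a). The paper's route is $(b)\Rightarrow(c)$, and it is not a formality. From~(b) and Lemma~\ref{la:prop_split}(b), $\alpha$ is split injective modulo induced summands; by Lemma~\ref{la:split_mod_ind} its restriction to the non-induced part $A'$ of $S^r$ is genuinely split, so $\alpha(A')$ is a summand of $\widetilde{S}^r$. One then passes to the quotient sequence
\[
0 \longrightarrow \operatorname{Im}(d_1) \longrightarrow S^r/A' \longrightarrow \widetilde{S}^r/\alpha(A') \longrightarrow 0,
\]
whose middle term is now induced; since the sequence remains separated on restriction to~$H$, Proposition~\ref{prop:homol2} gives separation over~$G$, Lemma~\ref{la:homol1} identifies $\widetilde{S}^r/\alpha(A')$, and adding back $\alpha(A') \cong_{\operatorname{ind}} S^r \cong_{\operatorname{ind}} \Omega_{2^n}^r\Lambda^r(V_{2^{n-1}-s})$ yields~(c). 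Without this step you have only shown $(a)\Leftrightarrow(c)$ and $(a)\Rightarrow(b)$, so~(b) is not yet proved equivalent to the others.
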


\begin{proof}
We write $S^r := S^r(V_{2^{n-1}+s})$,
$\widetilde{S}^r := \widetilde{S}^r(V_{2^{n-1}+s})$ and
$K^i := K^i(V_{2^{n-1}+s}, V_s^2)$. The conditions on $K^i$ and
Lemma~\ref{la:sep_in_posdeg} show that $K^r$ is separated except,
perhaps, at $K_0^r=S^r$. The restriction of the complex $K^r$ to $H$
decomposes as a tensor product of two complexes, by
Lemma~\ref{la:dirsum}. Each of these is separated by our continuing
induction hypothesis, hence so is their product, by
Lemma~\ref{la:tensep}, and so $K^r$ is separated on restriction to $H$.
Thus the short exact sequence
\begin{equation} \label{eq:seqij}
0 \longrightarrow \Ima (d_1) \stackrel{i}{\longrightarrow} S^r
\stackrel{j}{\longrightarrow}  \widetilde{S}^r \longrightarrow 0
\end{equation}
from $K^r$ is separated at $S^r$ on restriction to $H$ (the
maps $i$, $j$ should not be confused with the indices in part
(c) of the lemma). The separation of $K^r$ in positive
(complex-) degrees and Lemma~\ref{la:homol1} yield the formula
$\Ima (d_1) \cong _{\proj} \bigoplus_{\genfrac{}{}{0pt}{}{i \ge 1,\; j \ge 0}{2i+j=r}}
  \Omega_{2^n}^{i-1} (\Lambda^i(V_s)) \otimes S^j(V_{2^{n-1}+s})$.
Theorem~\ref{sym} now shows that
\begin{equation} \label{eq:imd1}
\Ima (d_1) \cong _{\ind} \bigoplus_{\genfrac{}{}{0pt}{}{i \ge 1,\; j \ge 0}{2i+j=r}}
  \Omega_{2^n}^{i+j-1} (\Lambda^i(V_s) \otimes
  \Lambda^j(V_{2^{n-1}-s})).
\end{equation}

\noindent (a) $\Rightarrow$ (b) Let
$f: \widetilde{S}^r \rightarrow \Lambda^r$ be the natural
surjection, so $g = f \circ j$. By
Proposition~\ref{prop:split_sm}, the map $f$ is split injective modulo induced
summands, and, by Lemma~\ref{la:prop_split}, it is enough to show that
$j$ is split injective modulo induced summands.
By assumption, $S^r = X \oplus M$ for some submodules $X$ and $M$ of $S^r$
such that $X$ is projective  and $\ker(j) = \Ima(d_1) \subseteq X$.
Let $j': S^r \rightarrow X$ be the projection onto $X$ and
$u': X \rightarrow S^r$ the natural embedding. Define
$u: \widetilde{S}^r=j(X) \oplus j(M) \rightarrow S^r, j(x) + j(m)
\mapsto m$ (note that the restriction of $j$ to $M$ is injective). Then
$u \circ j + u' \circ j' = \Id_{S^r}$ and so $j$ is split injective modulo
induced summands.

\medskip

\noindent (b) $\Rightarrow$ (c) Assume (b). The factorization
$g = f \circ j$ and Lemma~\ref{la:prop_split} (b) imply that $j$
is also split injective modulo induced summands. Write $S^r = A' \oplus A''$, where
$A'$ has only non-induced summands and $A''$ is induced. By
Lemma~\ref{la:split_mod_ind}, the restriction of $j$ to
$A'$ is split injective, so $j$ maps~$A'$ injectively into~$\widetilde{S}^r$ and $j(A')$ is
a direct summand of $\widetilde{S}^r$. Factoring out $A'$ and $j(A')$
in (\ref{eq:seqij}) we obtain the short exact sequence
$0 \longrightarrow \Ima (d_1) \stackrel{\overline{i}}{\longrightarrow}
S^r/A' \stackrel{\overline{j}}{\longrightarrow} \widetilde{S}^r/j(A')
\longrightarrow 0$.

As we have seen at the beginning of the proof, $i$ factors
through a projective on restriction to~$H$, and so the same is true
for $\overline{i}$. Thus the complex $\Ima(d_1) \rightarrow S^r/A'$ is
separated on restriction to $H$. Because $S^r/A' \cong A''$ is
induced, the complex is separated, by
Proposition~\ref{prop:homol2}. Lemma~\ref{la:homol1} yields
$\widetilde{S}^r/j(A') \cong_{\proj} S^r/A' \oplus \Omega_{2^n} \Ima(d_1)
\cong_{\ind} \Omega_{2^n} \Ima(d_1)$. Using (\ref{eq:imd1}) we obtain
\begin{equation} \label{eq:modjA'}
\widetilde{S}^r/j(A') \cong_{\ind} \bigoplus_{\genfrac{}{}{0pt}{}{i
\ge 1,\; j \ge 0}{2i+j=r}} \Omega_{2^n}^{i+j} (\Lambda^i(V_s) \otimes
\Lambda^j(V_{2^{n-1}-s})).
\end{equation}
Theorem~\ref{sym} implies $j(A') \cong A' \cong _{\ind} S^r
\cong_{\ind} \Omega_{2^n}^r \Lambda ^r(V_{2^{n-1}-s})$.
Adding the summand $j(A')$ to both sides of (\ref{eq:modjA'}) and
using Corollary~\ref{cor:structSt_sm} gives us the formula in (c).

\medskip

\noindent (c) $\Rightarrow$ (a) Assume that (c) holds. From
Corollary~\ref{cor:structSt_sm}, Theorem~\ref{sym} and (\ref{eq:imd1}) we get
$\widetilde{S}^r \cong_{\ind} S^r \oplus \Omega_{2^n}^{-1}
\Ima(d_1)$. Separation of $K^r$ now follows from applying
Lemma~\ref{la:looks_sep} to the short exact sequence (\ref{eq:seqij}).
\end{proof}

Separation of $K^r(V_{2^{n-1}+s}, V_s^2)$ for $r=0, 1$ is
trivial. We will now prove it for $r = 2$.
Notice that if a non-zero map $V_a \rightarrow V_b$ of $kG$-modules is
to factor through a projective over $C_{2^n}$, then we must have
$a+b>2^n$. This is because the map must factor through the projective
cover $V_{2^n} \twoheadrightarrow V_b$, which has kernel $V_{2^n-b}$,
into which $V_a$ will certainly be mapped if $a \leq 2^n -b$.

\begin{lemma}
\label{la:lambda2}
For any integer $s$ such that
$0 \le s \le 2^{n-1}$ the complex $K^2(V_{2^{n-1}+s}, V_s^2)$ is
separated.
\end{lemma}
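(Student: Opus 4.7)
The plan is to apply Lemma~\ref{la:sep_equiv} with $r=2$. Its hypothesis that $K^i(V_{2^{n-1}+s},V_s^2)$ is separated for $0\le i<2$ is trivial, since $K^0$ and $K^1$ each consist of the single nonzero module ($k$ and $V_{2^{n-1}+s}$ respectively) in Koszul degree~$0$. Thus conditions~(a),~(b),~(c) of Lemma~\ref{la:sep_equiv} become equivalent for $r=2$, and the case $s=0$ is immediate because then $V_s^2=0$. For $1\le s\le 2^{n-1}$ the complex $K^2$ is the short exact sequence
\[
0\longrightarrow V_s\xrightarrow{w\mapsto w^2} S^2(V_{2^{n-1}+s})\longrightarrow \widetilde{S}^2(V_{2^{n-1}+s})\longrightarrow 0,
\]
and separation of $K^2$ amounts to separation of this sequence at $S^2$.

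My next step would be to establish separation at $S^2$ on restriction to~$H$: by Lemma~\ref{la:dirsum}, $K(V_{2^{n-1}+s},V_s^2)\downarrow^G_H$ decomposes as the tensor product of two Koszul complexes of the same form at level $n-1$, each separated by the inductive hypothesis on Theorem~\ref{thm:main}(a), so Lemma~\ref{la:tensep} gives separation of the product and hence of the above sequence at $S^2$ over~$H$.

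To lift from~$H$ to~$G$ I would split on the parity of~$s$. If $s$ is even, then $V_s$ is induced from~$H$, so the two-term complex is weakly induced from~$H$ except in degree~$0$; combined with acyclicity (Lemma~\ref{la:KVWhom}) and the $H$-separation just established, Proposition~\ref{prop:homol2} gives $G$-separation at once. If $s$ is odd, I would instead apply Lemma~\ref{la:looks_sep}; using Corollary~\ref{cor:structSt_sm} to identify $\widetilde{S}^2\cong_{\ind}\Lambda^2$ and Theorem~\ref{sym} to identify $S^2\cong_{\ind}\Lambda^2(V_{2^{n-1}-s})$, its remaining hypothesis rewrites as the Gow--Laffey identity
\[
\Lambda^2(V_{2^{n-1}+s})\cong_{\ind}\Lambda^2(V_{2^{n-1}-s})\oplus V_{2^n-s}.
\]

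The main obstacle is verifying this identity independently in the odd case. My plan is constructive: produce $y\in S^2(V_{2^{n-1}+s})$ with $(g-1)^{2^n-s}y=x_s^2$. The cyclic submodule $kGy$ then has length exactly $2^n$, so is isomorphic to the projective $V_{2^n}$; its socle-length-$s$ submodule coincides with $W^2\cong V_s$, and hence $W^2\hookrightarrow S^2(V_{2^{n-1}+s})$ factors through the projective $kGy$, giving separation directly (and bypassing the formula). Existence of~$y$ should follow from a Lucas-theorem analysis of the coefficients $\binom{k}{i}\binom{i}{k-j}\bmod 2$ in the expansion $(g-1)^k(x_ax_b)=\sum_{i,j}\binom{k}{i}\binom{i}{k-j}x_{a-i}x_{b-j}$, with~$y$ a short sum of degree-2 monomials depending on the binary expansion of~$s$ (the guess $y=x_{top-1}x_{top}$ works for $s=1$); alternatively the identity can be extracted from~\cite{gowlaffey}.
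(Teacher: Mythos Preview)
Your reduction for even $s$ via Proposition~\ref{prop:homol2} is fine, and your $H$-separation step matches the paper. The gap is the odd case: you reduce separation to the Gow--Laffey identity $\Lambda^2(V_{2^{n-1}+s})\cong_{\ind}\Lambda^2(V_{2^{n-1}-s})\oplus V_{2^n-s}$ (which is exactly condition~(c) of Lemma~\ref{la:sep_equiv} for $r=2$), but you do not prove it. Citing \cite{gowlaffey} is logically admissible but defeats a stated aim of the paper, which is to obtain independent proofs of the results there (see the introduction and Remark~(b) in the final section). Your constructive alternative---exhibiting $y$ with $(g-1)^{2^n-s}y=x_s^2$---is just a restatement of what is to be proved: the existence of such $y$ is equivalent to $V_s^2$ sitting inside a free summand of $S^2$, so the Lucas-theorem calculation you allude to \emph{is} the entire content, and you have only checked $s=1$.

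The paper avoids this entirely by a short dimension argument that works uniformly for all $s$. Write $S^2(V_{2^{n-1}+s})=A'\oplus A''$ with $A''$ induced and $A'$ having no induced summand. By Theorem~\ref{sym}, $S^2(V_{2^{n-1}+s})\cong_{\ind}\Lambda^2(V_{2^{n-1}-s})$, and since $V_{2^{n-1}-s}$ is a $C_{2^{n-1}}$-module every summand $V_t$ of $A'$ has $t\le 2^{n-1}$. On restriction to $H$ the component $V_s\to V_t$ becomes a map between sums of $H$-indecomposables each of dimension $\le 2^{n-2}$; by the criterion recorded just before the lemma (a non-zero $V_a\to V_b$ over $C_{2^{n-1}}$ can factor through a projective only if $a+b>2^{n-1}$), none of these components can be simultaneously non-zero and stably trivial over $H$. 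But you already know the whole map $V_s\hookrightarrow S^2$ is stably trivial over $H$, so the component $V_s\to A'$ is zero. The remaining component $V_s\to A''$ lands in an induced module and factors through a projective by Proposition~\ref{prop:homol2}. No case split on the parity of $s$, no Gow--Laffey, no explicit element.
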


\begin{proof}
The complex in question is $V_s \hookrightarrow S^2(V_{2^{n-1}+s})$.
By induction, the map factors through a projective on restriction to
$H$. Write $S^2(V_{2^{n-1}+s}) = A' \oplus A''$, where $A'$ has only
non-induced summands and $A''$ has only induced summands.
The component $V_s \rightarrow A''$ factors through a projective, by
Proposition~\ref{prop:homol2}.

We claim that the component $V_s \rightarrow A'$ must be 0. From
Theorem~\ref{sym}, we know that
$S^2(V_{2^{n-1}+s}) =_{\ind} \Lambda^2(V_{2^{n-1}-s})$; but
$V_{2^{n-1}-s}$ is a module for $C_{2^{n-1}}$, and it follows
that~$A'$ contains only summands of dimension $\le 2^{n-1}$. Let $V_t$
be such a summand, so $t \le 2^{n-1}$ and suppose that there is a
non-zero component $V_s \rightarrow V_t$. It must factor through a
projective on restriction, where it is a map
$V_{s'} \oplus V_{s''} \rightarrow V_{t'} \oplus V_{t''}$, with
$s',s'',t',t'' \leq 2^{n-2}$. By the discussion above, none of the
components can factor through a projective module over $C_{2^{n-1}}$
unless they are~0.
\end{proof}

We can readily prove separation when $s$ is even.

\begin{lemma} \label{la:sep}
For any even integer $s$ such that
$0 \le s \le 2^{n-1}$, the complex $K(V_{2^{n-1}+s}, V_s^2)$ of
$kG$-modules is separated.
\end{lemma}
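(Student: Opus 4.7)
The plan is to exploit the fact that when $s$ is even (and $\geq 2$), both $V_{2^{n-1}+s}$ and $V_s$ are induced from $H$, and moreover this extends to a compatible identification of pairs. This will let us realize the Koszul complex as the tensor induction of a smaller Koszul complex, at which point the inductive hypothesis and Lemma~\ref{la:tenindsep} finish the job.

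First I would dispose of the trivial case $s=0$: here $V_s=0$, so the Koszul complex is concentrated in complex-degree~$0$ and all boundary maps are zero. Separation is automatic.

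For $s \geq 2$ even (so in particular $n \geq 2$), the key observation is that since $V_j\hspace{-0.1cm}\uparrow^G_H \cong V_{2j}$ for $1 \leq j \leq 2^{n-1}$, we have
\[
V_{2^{n-1}+s} \cong V_{2^{n-2}+s/2}\hspace{-0.1cm}\uparrow^G_H \quad \text{and} \quad V_s \cong V_{s/2}\hspace{-0.1cm}\uparrow^G_H
\]
as $kG$-modules. Furthermore, the embedding $V_s \hookrightarrow V_{2^{n-1}+s}$ is obtained (up to isomorphism) by inducing the embedding $V_{s/2} \hookrightarrow V_{2^{n-2}+s/2}$ of $kH$-modules; this is forced by the rigidity of the uniserial composition series of $V_{2^{n-1}+s}$, since $V_s$ is the unique $kG$-submodule of that dimension. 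Since $k = \mathbb{F}_2$, Lemma~\ref{la:tensind} then yields an isomorphism
\[
K(V_{2^{n-1}+s},V_s^2) \;\cong\; K(V_{2^{n-2}+s/2},V_{s/2}^2)\hspace{-0.1cm}\uparrow^{\otimes G}_H
\]
of complexes of graded $kG$-modules.

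By the continuing induction hypothesis on $n$, part (a) of Theorem~\ref{thm:main} holds for the group $H = C_{2^{n-1}}$ and the parameter $s/2$ (which satisfies $0 \leq s/2 \leq 2^{n-2}$); hence $K(V_{2^{n-2}+s/2},V_{s/2}^2)$ is a separated complex of $kH$-modules. Since $G = C_{2^n}$ has a unique elementary abelian $2$-subgroup, namely the $C_2$ contained in $H$, the hypothesis of Lemma~\ref{la:tenindsep} is trivially met, so tensor induction preserves separation. This gives the desired separation of $K(V_{2^{n-1}+s}, V_s^2)$ over $kG$. The one point that genuinely needs care is the matching of the submodule embedding under induction, but once that is in hand (and it is, thanks to Lemma~\ref{la:tensind}), the argument is a straightforward application of the machinery already developed in Section~\ref{sec:koszul}.
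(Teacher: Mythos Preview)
Your proof is correct and follows essentially the same approach as the paper: write $s=2s'$, use Lemma~\ref{la:tensind} to identify $K(V_{2^{n-1}+s},V_s^2)$ with the tensor induction of $K(V_{2^{n-2}+s'},V_{s'}^2)$, and then apply the induction hypothesis together with Lemma~\ref{la:tenindsep}. Your treatment is slightly more explicit (separating off $s=0$, spelling out why the submodule embedding is compatible with induction, and verifying the elementary-abelian hypothesis of Lemma~\ref{la:tenindsep}), but the underlying argument is identical.
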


\begin{proof}
Write $s = 2 s'$. From Lemma~\ref{la:tensind} we know that
$K(V_{2^{n-1}+s}, V_s^2) \cong K(V_{2^{n-2}+s'},
V_{s'}^2) \hspace{-0.1cm} \uparrow^{\otimes G}_H$. The right hand side
is separated by Lemma~\ref{la:tenindsep} and our induction
hypothesis.
\end{proof}

In view of this lemma, we assume now that $s$ is odd.

\begin{lemma} \label{la:s_in_c}
Let $s$ be an odd integer such that
$0 < s < 2^{n-1}$. Then (given our induction hypothesis):
\begin{enumerate}
\item[(a)] $\Lambda^r(V_{2^{n-1}-s}) \in c(G)$ for all $r \ge 0$ and

\item[(b)] $S^r(V_{2^{n-1}+s}) \in c(G)$ for all $0 \le r < 2^{n-1}$.
\end{enumerate}
Here $c(G)$ is the subgroup of the Green ring in Lemma~\ref{la:c}.
\end{lemma}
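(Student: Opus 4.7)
The plan is to prove (a) by induction on $n$, combining Theorem \ref{thm:main}(d) at level $n-1$ (part of the outer induction hypothesis) with (a) of the current lemma at level $n-1$; then (b) follows from (a) and the sharper form of Theorem \ref{sym} recorded in the remark just before the lemma.

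For (a), the base case $n = 2$ forces $s = 1$ and $V_{2^{n-1}-s} = V_1$, whose exterior powers are $V_1$ or $0$, hence in $c(G)$. For the inductive step with $n \ge 3$, set $m = 2^{n-1}-s$; then $m$ is odd with $1 \le m \le 2^{n-1}-1$, so $V_m$ factors through $C_{2^{n-1}}$. If $m < 2^{n-2}$, write $m = 2^{n-2}-s'$ with $s' = 2^{n-2}-m$ odd and $0 < s' < 2^{n-2}$; then (a) at level $n-1$ gives $\Lambda^r(V_m) \in c(C_{2^{n-1}}) \subseteq c(G)$. Otherwise $m > 2^{n-2}$, so $m = 2^{n-2}+s^*$ with $s^* = m - 2^{n-2}$ odd and $0 < s^* < 2^{n-2}$, and Theorem \ref{thm:main}(d) at level $n-1$ gives, modulo copies of $V_{2^{n-1}}$,
\[
\Lambda^r(V_m) \cong \bigoplus_{2i+j=r} \Omega_{2^{n-1}}^{i+j}\bigl(\Lambda^i(V_{s^*}) \otimes \Lambda^j(V_{2^{n-2}-s^*})\bigr).
\]
Both $\Lambda^j(V_{2^{n-2}-s^*})$ (directly by (a) at level $n-1$) and $\Lambda^i(V_{s^*})$ (by (a) at level $n-1$ applied with $s' = 2^{n-2}-s^*$, noting $V_{s^*} = V_{2^{n-2}-(2^{n-2}-s^*)}$) lie in $c(C_{2^{n-1}}) \subseteq c(G)$. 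Lemma \ref{la:c}(a) and (b) (applied over $C_{2^{n-1}}$) then place each summand in $c(G)$; and since $2^{n-1} \equiv 0 \pmod 4$ for $n \ge 3$ one has $V_{2^{n-1}} \in c(G)$, so the ambiguity is absorbed.

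For (b), the remark just before the lemma asserts the sharper formula $\sigma_t(V_{2^{n-1}+s}) =_{\proj} \sigma_t(V_{2^n})\,\lambda_t^\Omega(V_{2^{n-1}-s})$ (modulo copies of $V_{2^n}$). Taking the coefficient of $t^r$ for $r < 2^{n-1}$,
\[
S^r(V_{2^{n-1}+s}) \cong_{\proj} \bigoplus_{i+j=r} S^i(V_{2^n}) \otimes \Omega_{2^n}^{j}\Lambda^j(V_{2^{n-1}-s}).
\]
By (a) and Lemma \ref{la:c}(b), each $\Omega_{2^n}^{j}\Lambda^j(V_{2^{n-1}-s})$ is in $c(G)$. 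Since $V_{2^n}$ carries a $G$-permutation basis, $S^i(V_{2^n})$ is a permutation module, hence a sum of $V_{2^k}$'s; a summand $V_2$ would correspond to a $G$-orbit of size $2$ on degree-$i$ monomials, i.e., a $C_{2^{n-1}}$-invariant multiset of exponents, which forces $2^{n-1} \mid i$ and is impossible for $0 < i < 2^{n-1}$. So every summand of $S^i(V_{2^n})$ with $i < 2^{n-1}$ is $V_{2^k}$ with $k \ne 1$, whence $S^i(V_{2^n}) \in c(G)$. Lemma \ref{la:c}(a) gives the tensor products in $c(G)$, and since $V_{2^n} \in c(G)$, it follows that $S^r(V_{2^{n-1}+s}) \in c(G)$.

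The main obstacle is arranging the induction in (a) so that both factors in the level-$(n-1)$ decomposition of $\Lambda^r(V_m)$ are captured by (a) at level $n-1$; the symmetric reformulation $V_{s^*} = V_{2^{n-2}-(2^{n-2}-s^*)}$ is exactly what makes this work. For (b), the delicate point is that ``modulo induced summands'' is too coarse to conclude membership in $c(G)$ (for instance $V_2 = V_1 \uparrow^G_H$ is induced but not in $c(G)$), so one cannot argue directly from Theorem \ref{sym}; the sharpened formula from the remark together with the explicit permutation structure of $S(V_{2^n})$ is essential.
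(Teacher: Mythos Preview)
Your proof is correct and follows essentially the same approach as the paper. For part (b) your argument is identical to the paper's: use the $=_{\proj}$ refinement of Theorem~\ref{sym}, then analyze the permutation structure of $S^i(V_{2^n})$ to see that no $V_2$ summand can appear in degrees below $2^{n-1}$. For part (a) the only difference is organizational: the paper does a single induction on the dimension $d=2^{n-1}-s$ (writing $d=2^{m-1}+s^*$ for the smallest possible $m$, applying Theorem~\ref{thm:main}(d) over $C_{2^m}$, and noting both factors have odd dimension $<d$), whereas you induct on $n$ and split into the cases $m<2^{n-2}$ and $m>2^{n-2}$, reducing each to the lemma at level $n-1$. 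Your version is a valid repackaging of the same recursion; the paper's direct induction on $d$ is slightly cleaner since it avoids the case split.
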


\begin{proof}
For part (a), the dimension $d=2^{n-1}-s$ of the module  is in the
range where we know that our formula for exterior powers (see
Theorem~\ref{thm:main} (d)) is valid by our continuing induction
hypothesis. The
statement is clearly true for $d=1$ and we can employ induction on
$d$, using the formula and the properties of $c(G)$ in
Lemma~\ref{la:c}.

For part (b) we use the formula $\sigma _t(V_{2^{n-1}+s}) =_{\proj}
\sigma _t (V_{2^{n}}) \lambda^{\Omega}_t (V_{2^{n-1}-s})$ from the
remark at the end of Section~\ref{sec:key} and part (a). The summands
of $S(V_{2^{n}})$ are permutation modules on a monomial basis, so are
in $c(G)$ unless the stabilizer of a monomial is of index 2.
But this first happens in degree $2^{n-1}$, because if a monomial fixed
by a subgroup of order $2^{n-1}$ contains~$y_i$, it must also contain
all $2^{n-1}$ elements of the orbit of $y_i$.
\end{proof}

%%%%%%%%%%%%%%%%%%%%%%%%%%%%%%%%%%%%%%%%%%%%%%%%%%%%%%%%%%%%%%%%%%%%%%%%%%%%%%%

\section{Separation}
\label{sec:sep2}

First we make some general constructions related to symmetric and exterior powers of vector spaces.
It is convenient to do this integrally first and then reduce modulo 2.
Let $U$ be a free module over the integers localized at $2$,
$\Z_{(2)}$. For $r \ge 0$ set
\[
T^r(U) = U \otimes _{\mathbb Z _{(2)}} \cdots \otimes _{\mathbb Z
  _{(2)}} U  \quad \mbox{($r$ times)}.
\]
Let the symmetric group $\Sigma _r$ act on $T^r(U)$ by permuting the
factors. Factoring out the action of $ \Sigma_r$ we get $S^r(U) =
T^r(U)/\Sigma_r=T^r(U) \otimes _{\Z_{(2)} \Sigma _r} \Z_{(2)}$. 
We can also let $\Sigma_r$ act on $T^r(U)$ by permuting the factors
and multiplying by the signature of the permutation, in which case we
write $T^r(U)_{\sigma}$. Similarly, on factoring out the action of
$\Sigma_r$ we obtain $\Lambda^r(U)=T^r(U)_{\sigma}/\Sigma_r$.

For any subset $I \subseteq \{1, \dots, r\}$ we set
$\Sigma_I := \{\pi \in \Sigma_r \mid \pi(i)=i \text{   for all   } i
\not \in I\}$ (so $\Sigma_I$ is a subgroup of $\Sigma_r$ isomorphic to
$\Sigma_{|I|}$). For $r \ge 2$, write $r=2^p+t$ with
$1 \leq t \leq 2^p$. Consider the subgroup
\[
Q_r := \begin{cases} \Sigma_{\{1,\dots,2^p\}} \times \Sigma_{\{2^p+1,\dots,r\}} & \mbox{if } t<2^p \\
                       (\Sigma_{\{1,\dots,2^p\}} \times
                       \Sigma_{\{2^p+1,\dots,2^{p+1}\}}) \rtimes \langle \tau
                       \rangle & \mbox{if } t=2^p \end{cases}
\]
of $\Sigma_r$, where $\tau \in \Sigma_r$ is the involution mapping $i$
to $2^p+i$ for $i=1,2,\dots,2^p$. The importance of~$Q_r$ lies the fact
that the index $|\Sigma _r : Q_r|$ is odd. This can been seen as
follows: $|\Sigma_{2^p+t}|/ (|\Sigma_{2^p}| \cdot |\Sigma_t|) =
\binom{2^p+t}{t}$, which is equal to the coefficient of $x^t$ in 
$(1+x)^{2^p+t} \equiv (1+x^{2^p})(1+x)^t \pmod{2}$; also
$(1+x)^{2^{p+1}} = (1+x^{2^p}+2X))^2 \equiv 1+2x^{2^p}+x^{2^{p+1}}
\pmod{4}$. 

Define  $L_S^r(U) := T^r(U)/Q_r$ and
$L_{\Lambda}^r(U) := T^r(U)_{\sigma}/Q_r$. There are natural quotient
maps $q_S: L_S^r(U) \rightarrow S ^r(U)$ and $q_{\Lambda}:
L_{\Lambda}^r(U) \rightarrow \Lambda^r(U)$, which have sections
$\tr_S : S^r(U) \rightarrow L_S^r(U)$ and
$\tr_{\Lambda} : \Lambda ^r(U) \rightarrow L_{\Lambda}^r(U)$ given by
$\tr x := \frac{1}{|\Sigma _r : Q_r|}\sum _{\pi \in \Sigma _r / Q_r} \pi x$.
These have the property that $q_S \circ \tr_S = \Id_{S^r(U)}$ and
$q_{\Lambda} \circ \tr_{\Lambda} = \Id_{\Lambda^r(U)}$. These maps are
all natural transformations of functors on free $\Z_{(2)}$-modules.

Writing $r=2^p+t$ as before, we see from the description of $Q_r$ that
\[
L_S^r(U) \cong \begin{cases} S^{2^p}(U) \otimes S^t(U) & \mbox{if } t<2^p \\
                               (S^{2^p}(U) \otimes S^{2^p}(U))/C_2 \cong
S^2(S^{2^p}(U)) & \mbox{if } t=2^p. \end{cases}
\]
Similarly, if $r \ge 3$ we have
\[
L_{\Lambda}^r(U) \cong \begin{cases} {\Lambda}^{2^p}(U) \otimes {\Lambda}^t(U) &
\mbox{if } t<2^p \\
                               ({\Lambda}^{2^p}(U) \otimes {\Lambda}^{2^p}(U))/C_2
\cong S^2(\Lambda^{2^p}(U)) & \mbox{if } t=2^p,
\end{cases}
\]
because the involution $\tau$ has signature 1, provided that $p \geq 1$.

Now let $V$ be an $\F_2$-vector space and let $U$ be a free
$\Z_{(2)}$-module such that $V \cong \F_2 \otimes_{\Z_{(2)}} U$.
Let $L^r$ denote one of the functors
$S^r,\Lambda^r,L_S^r,L_{\Lambda}^r$ above and use it to define a
functor with the same name on $\mathbb F_2$-vector spaces by
$L^r(V)=\mathbb F_2 \otimes _{\mathbb Z _{(2)}}L^r(U)$. This gives the
expected result for $S^r(V)$ and $\Lambda ^r(V)$.

In order to verify that $L^r$ is really a functor on vector spaces,
notice that if $U$ and $U'$ are two free $\mathbb Z _{(2)}$-modules
then the natural map $\Hom_{\mathbb Z _{(2)}} (U,U') \rightarrow \Hom
_{\mathbb F _2} (\mathbb F_2 \otimes _{\mathbb Z _{(2)}} U, \mathbb
F_2 \otimes _{\mathbb Z _{(2)}} U')$ is surjective, so all maps of
vector spaces lift. Furthermore, a map in the kernel has image in
$2U'$, so factors through multiplication by 2 on $U'$. But
multiplication by 2 on $U'$ induces multiplication by $2^r$ on
$T^r(U')_{\sigma}$, thus it induces 0 on
$\mathbb F_2 \otimes _{\mathbb Z _{(2)}}L^r(U')$.

It follows that the formulas above are also valid for
$\mathbb F_2$-vector spaces. A difference is that we now have natural
transformations $e^r:S^r \rightarrow \Lambda^r$ and
$L_e^r:L_{S}^r \rightarrow L_{\Lambda}^r$ induced by reducing modulo
squares.

The above functors induce functors on modules for a group in the obvious way.

\begin{remark}
Any representation of $G$ over a field of characteristic 2 can be written in
$\F_2$, so this is sufficient for our purposes. If we really needed functors
on vector spaces over a bigger field, this could be achieved by starting with a
larger ring than $\Z_{(2)}$.
\end{remark}

In the rest of this section we prove part
(a) of Theorem~\ref{thm:main}, assuming the whole of the theorem for
smaller $n$. We use the same notation as before.

\begin{lemma}
\label{la:L_ind}
Suppose that $V$ is a $kG$-module, $r \ge 2$ and
$L_e^r:L_S^r(V) \rightarrow L_{\Lambda}^r(V)$ is split injective modulo induced
summands. Then $e^r:S^r(V) \rightarrow \Lambda^r(V)$ is split injective modulo
induced summands.
\end{lemma}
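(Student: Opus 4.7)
The plan is to exploit the naturality of all the constructions $S^r, \Lambda^r, L_S^r, L_\Lambda^r$ and the natural transformations $e^r, L_e^r, \tr_S, \tr_\Lambda, q_S, q_\Lambda$ between them, then apply the abstract splitting calculus of Lemma~\ref{la:prop_split}.

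First I would record the key commutativity: the square
\[
\begin{array}{ccc}
S^r(V) & \xrightarrow{e^r} & \Lambda^r(V) \\[1mm]
{\scriptstyle \tr_S}\downarrow & & \downarrow{\scriptstyle \tr_\Lambda} \\[1mm]
L_S^r(V) & \xrightarrow{L_e^r} & L_\Lambda^r(V)
\end{array}
\]
commutes, i.e.\ $\tr_\Lambda\circ e^r = L_e^r\circ \tr_S$. This is because all four arrows are natural transformations of functors on $\F_2$-vector spaces that are induced (after reducing mod~$2$) from operations on $T^r(U)$: the trace maps are averaging over coset representatives of $\Sigma_r/Q_r$, and $e^r$, $L_e^r$ are both reduction modulo the relations $v\otimes v$, so the diagram already commutes at the level of $T^r$.

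Next I would observe that $\tr_S$ is split injective in the ordinary sense, because $q_S\circ \tr_S = \Id_{S^r(V)}$; in particular it is split injective modulo induced summands. Combined with the hypothesis that $L_e^r$ is split injective modulo induced summands, Lemma~\ref{la:prop_split}(a) yields that the composition $L_e^r\circ \tr_S$ is split injective modulo induced summands.

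By the commutative square, this composition equals $\tr_\Lambda\circ e^r$. Finally, applying Lemma~\ref{la:prop_split}(b) to the factorization $\tr_\Lambda\circ e^r$ (with $f=e^r$, $g=\tr_\Lambda$) shows that $e^r$ itself is split injective modulo induced summands, which is the desired conclusion.

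There is essentially no obstacle here: once the commutativity of the square is in place, the argument is a short formal manipulation using the two parts of Lemma~\ref{la:prop_split}. The only point needing a line of justification is the commutativity itself, and that follows immediately from the remark that both $\tr$ maps and both $e$ maps come from natural operations on $T^r$ (averaging and reduction modulo squares, respectively), which commute with one another before passing to the quotients.
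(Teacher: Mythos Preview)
Your proof is correct and follows exactly the same route as the paper: the same commutative square involving $\tr_S$, $\tr_\Lambda$, $e^r$, $L_e^r$, followed by the same two applications of Lemma~\ref{la:prop_split}(a) and (b). The only addition is your explicit justification of the commutativity, which the paper simply asserts.
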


\begin{proof}
Consider the commutative diagram
\begin{equation*}
\begin{CD}
S^r(V) @>{e^r}>> \Lambda ^r(V) \\
@V{\tr _S}VV    @VV{\tr _{\Lambda}}V \\
L_S^r(V) @>{L_e^r}>> L_{\Lambda}^r(V).
\end{CD}
\end{equation*}
The map $\tr_S$ is split injective, so if $L_e^r$ is split injective modulo induced
summands then so is $L_e^r \circ \tr _S$, by Lemma~\ref{la:prop_split}
(a). But this is equal to $\tr _{\Lambda} \circ \, e^r$ and
Lemma~\ref{la:prop_split} (b) shows that $e^r$ is split injective modulo induced
summands.
\end{proof}

\begin{lemma}
\label{la:Lsplit_mod_ind}
If $s$ is an odd integer such that
$0 < s < 2^{n-1}$ and $r$ is an integer such that
$0 \le r < 2^n$, then  $e^r:S^r(V_{2^{n-1}+s}) \rightarrow
{\Lambda}^r(V_{2^{n-1}+s})$ is split injective modulo induced summands.
\end{lemma}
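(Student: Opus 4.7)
The plan is to proceed by induction on $r$ in the range $0 \le r < 2^n$, using the functors $L_S^r$, $L_{\Lambda}^r$ and the reduction provided by Lemma~\ref{la:L_ind}. For $r \in \{0,1\}$ the map $e^r$ is an isomorphism, so the claim is trivial. For $r = 2$, Lemma~\ref{la:lambda2} gives separation of $K^2(V_{2^{n-1}+s}, V_s^2)$, and the implication (a)$\Rightarrow$(b) of Lemma~\ref{la:sep_equiv} then yields that $e^2$ is split injective modulo induced summands (the hypothesis on smaller $i$ is vacuous).

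For the inductive step with $r \ge 3$, I would write $r = 2^p + t$ with $1 \le t \le 2^p$ and, by Lemma~\ref{la:L_ind}, reduce to showing that $L_e^r$ is split injective modulo induced summands. If $t < 2^p$, then under the explicit formulas for $L_S^r$ and $L_{\Lambda}^r$ one has $L_e^r = e^{2^p} \otimes e^t$. Since $2^p, t < r$, the induction hypothesis applies to both factors and Lemma~\ref{la:prop_split}(c) shows their tensor product is split injective modulo induced summands.

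The remaining, main case is $t = 2^p$, so $r = 2^{p+1}$ with $p \ge 1$ and $L_e^r = S^2(e^{2^p})$. By induction, $e^{2^p}$ is split injective modulo induced summands, and by the remark following Lemma~\ref{la:split_mod_ind} one can choose an induced $kG$-module $X$ and a map $g$ such that $\iota := (e^{2^p},g)\colon S^{2^p}(V_{2^{n-1}+s}) \to \Lambda^{2^p}(V_{2^{n-1}+s}) \oplus X$ is split injective and every indecomposable summand of $X$ already appears in $S^{2^p}(V_{2^{n-1}+s})$. Applying the functor $S^2$ and using $S^2(A \oplus B) \cong S^2(A) \oplus (A \otimes B) \oplus S^2(B)$, the map $S^2(\iota)$ is a split injection
\[
S^2(S^{2^p}(V_{2^{n-1}+s})) \hookrightarrow S^2(\Lambda^{2^p}(V_{2^{n-1}+s})) \oplus (\Lambda^{2^p}(V_{2^{n-1}+s}) \otimes X) \oplus S^2(X),
\]
whose projection onto the first summand is exactly $S^2(e^{2^p})$. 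Thus $S^2(e^{2^p})$ will be split injective modulo induced summands as soon as both $\Lambda^{2^p}(V_{2^{n-1}+s}) \otimes X$ and $S^2(X)$ are shown to be induced. The former is immediate since $a_I(G)$ is an ideal, so the real obstacle is $S^2(X)$. Here I would exploit that $r = 2^{p+1} < 2^n$ forces $2^p < 2^{n-1}$, so by Lemma~\ref{la:s_in_c}(b) the module $S^{2^p}(V_{2^{n-1}+s})$ lies in $c(G)$; each indecomposable summand $V_i$ of $X$ is therefore both even-dimensional and satisfies $i \not\equiv 2 \pmod 4$, i.e.\ $4 \mid \dim V_i$. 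Corollary~\ref{cor:s2} then gives that $S^2(X)$ is induced, completing the inductive step.
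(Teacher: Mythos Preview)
Your proposal is correct and follows essentially the same argument as the paper's proof: induction on $r$ with the base cases handled via Lemmas~\ref{la:lambda2} and~\ref{la:sep_equiv}, reduction to $L_e^r$ via Lemma~\ref{la:L_ind}, the tensor case $t<2^p$ via Lemma~\ref{la:prop_split}(c), and the case $t=2^p$ by applying $S^2$ to a split injection into $\Lambda^{2^p}(V)\oplus X$ with $X$ chosen (via the remark after Lemma~\ref{la:split_mod_ind}) so that its summands lie in $c(G)$ and hence have dimension divisible by~4, whence $S^2(X)$ is induced by Corollary~\ref{cor:s2}. The only cosmetic difference is notation for the split injection.
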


\begin{proof}
We use induction on $r$. The cases $r=0,1$ are trivial and $r=2$ is
covered by Lemma~\ref{la:lambda2} combined with Lemma~\ref{la:sep_equiv}.
Let $r \geq 3$ and write $r=2^p+t$ with $1 \leq t \leq 2^p$.
Abbreviate $V_{2^{n-1}+s}$ to $V$. By Lemma~\ref{la:L_ind}, it is
sufficient to check that $L_e^r:L_S^r(V) \rightarrow L_{\Lambda}^r(V)$
is split injective modulo induced summands.

If $t < 2^p$ then $L_e^r=e^{2^p} \otimes e^t: S^{2^p}(V) \otimes S^{t}(V)
\rightarrow {\Lambda}^{2^p}(V) \otimes {\Lambda}^{t}(V)$. This is
split injective modulo induced summands by induction and
Lemma~\ref{la:prop_split} (c).

If $t=2^p$ then $L_e^r=S^2(e^{2^p}): S^2(S^{2^p}(V)) \rightarrow
S^2(\Lambda^{2^p}(V))$. By induction, $e^{2^p}$ is split injective modulo
induced summands, so it extends to a split injective map
$M:S^{2^p}(V) \rightarrow \Lambda^{2^p}(V) \oplus X$ with
left inverse~$N$, where $X$ is induced. By the remark after
Lemma~\ref{la:split_mod_ind}, we may assume that~$X$ only contains
summands that are also summands of $S^{2^p}(V)$; by
Lemma~\ref{la:s_in_c} and the assumption that $r < 2^n$ (and hence
$2^p < 2^{n-1}$, since $t=2^p$), these are of dimension divisible by 4.
Applying $S^2$, we see that $S^2(e^{2^p})$ extends to
\[
S^2(M):S^2(S^{2^p}(V)) \rightarrow S^2(\Lambda^{2^p}(V)) \oplus
S^2(X) \oplus (\Lambda^{2^p}(V) \otimes X),
\]
with left inverse $S^2(N)$. Certainly $\Lambda^{2^p}(V) \otimes X$ is
induced, and $S^2(X)$ is induced, by Corollary~\ref{cor:s2}.
Thus $L_e^r$ and thus $e^r$ are split injective modulo induced summands.
\end{proof}

Again, let $s$ be an odd integer such that
$0 < s < 2^{n-1}$. It follows from Lemmas~\ref{la:sep_in_posdeg},
\ref{la:sep_equiv} and \ref{la:Lsplit_mod_ind} that the complex
$K^r(V_{2^{n-1}+s}, V_s^2)$ is separated for all $0 \le r < 2^n$.

Recall that $K(V_{2^{n-1}+s}, V_s^2)$ is separated if and only if the
complex $L(V_{2^{n-1}+s}, V_s^2)$ from Section~\ref{sec:per} is
separated. For the rest of this section we will write just $K,L$
etc.\ . Now $L^r$ is separated for all $0 \le r < 2^n$, because it
coincides with $K^r$ in this range. We will show that $L^r$ is
separated for $r \ge 2^n$ by induction on $r$, so let $r \ge 2^n$ and
assume that the complex is separated in all lower degrees. 

By Lemma~\ref{la:sep_in_posdeg}, we can also assume that
$L^r$ is separated in positive (complex-)degrees,
so it is enough to prove that the short exact sequence
\begin{equation} \label{eq:sepseq}
0 \rightarrow \Ima(d_1^r) \rightarrow T^r \rightarrow
\widetilde{T}^r \rightarrow 0
\end{equation}
is separated at $T^r$.  By Lemma~\ref{la:dirsum}, the
restriction of $K$ to the maximal subgroup $H$
of $G$ decomposes as a tensor product of two complexes, and each of
these is separated, by our continuing induction hypothesis and
Theorem~\ref{thm:main}(a). Their product is also separated, by
Lemma~\ref{la:tensep}, hence so is $L^r$. It follows that the sequence
(\ref{eq:sepseq}) is separated at $T^r$ on
restriction to $H$.

But $T^r$ is induced for this range of $r$. Separation of
(\ref{eq:sepseq}) follows immediately from
Proposition~\ref{prop:homol2} applied to $\Ima(d_1^r) \rightarrow T^r$. 

This proves that the complex
$K^r(V_{2^{n-1}+s}, V_s^2)$ is separated for all $r \ge 0$, and part
(a) of Theorem~\ref{thm:main} follows.

%%%%%%%%%%%%%%%%%%%%%%%%%%%%%%%%%%%%%%%%%%%%%%%%%%%%%%%%%%%%%%%%%%%%%%%%%%%%%%%

\section{Exterior Powers}
\label{sec:ext}

 In this section we
prove part (d) of Theorem~\ref{thm:main}, assuming the whole of the
theorem for smaller~$n$.

Because we have already proved separation, periodicity and splitting
we know that
\[
\lambda _t(V_{2^{n-1}+s}) =_{\ind} \lambda _{t^2}^{\Omega}(V_{s})
  \lambda _t^{\Omega}(V_{2^{n-1}-s});
\]
see the first remark at the end of Section~\ref{sec:key}.
In order to obtain the formula with $=_{\proj}$, we first consider the
restriction to the subgroup $H$ of index 2. Writing $V_{2^{n-1}+s}
\downarrow ^G_H = V_{2^{n-2}+s'} \oplus V_{2^{n-2}+s''}$, the two
sides of the formula become
\[
\lambda _t(V_{2^{n-2}+s'})\lambda _t(V_{2^{n-2}+s''}) \quad \text{and}
\quad \lambda _{t^2}^{\Omega}(V_{s'})\lambda
_{t^2}^{\Omega}(V_{s''})\lambda _t^{\Omega}(V_{2^{n-2}-s'})\lambda
_t^{\Omega}(V_{2^{n-2}-s''}).
\]
But we know, by induction, that $\lambda _t(V_{2^{n-2}+s'}) =_{\proj}
\lambda _{t^2}^{\Omega}(V_{s'}) \lambda _t^{\Omega}(V_{2^{n-2}-s'})$
and similarly for $s''$. Thus, on restriction, the two sides are equal
modulo projectives. Now use Lemma~\ref{la:modind} in order to see that
the two sides are equal modulo projectives even before restriction.
This finally completes the proof of Theorem~\ref{thm:main}.

%%%%%%%%%%%%%%%%%%%%%%%%%%%%%%%%%%%%%%%%%%%%%%%%%%%%%%%%%%%%%%%%%%%%%%%%%%%%%%%%

\section{A Bound on the Number of Non-Induced Summands}
\label{sec:bound}

The description of the tensor product given in Section~\ref{sec:cyc2}
shows that the decomposition of $V_r \otimes V_s$ into indecomposable
summands involves a summand of odd dimension if and only both $r$ and
$s$ are odd, in which case it contains precisely one odd-dimensional
summand.

Let us write $\summ(V)$ for the number of indecomposable summands in
the module $V$.

\begin{proposition} \label{prop:summands}
The number of non-induced summands in $\Lambda(V)$ is at most
\[
2^{\frac{1}{2}(\dim (V) + \summ (V))}=\sqrt{2^{\summ(V)} \dim(\Lambda(V))}.
\]
\end{proposition}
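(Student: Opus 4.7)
Write $N(V)$ for the number of non-induced summands of a $kG$-module $V$, equivalently the number of odd-dimensional indecomposable summands. The remark preceding the proposition says that $V_a \otimes V_b$ has an odd-dimensional summand iff both $a$ and $b$ are odd, in which case there is exactly one; hence $N$ is multiplicative on tensor products, $N(V \otimes W) = N(V) N(W)$. Combined with $\Lambda(V \oplus W) \cong \Lambda(V) \otimes \Lambda(W)$, this gives $N(\Lambda(V)) = \prod_i N(\Lambda(V_{a_i}))$ when $V = \bigoplus_i V_{a_i}$. Since $\dim V + \summ V = \sum_i (a_i + 1)$, the proposition reduces to the indecomposable bound
\[
\phi(V_r) := N(\Lambda(V_r)) \;\leq\; 2^{(r+1)/2}, \qquad 1 \leq r \leq 2^n.
\]

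I would prove this by induction on $n$; the base $n=1$ is immediate from $\Lambda(V_1) = V_1 \oplus V_1$ and $\Lambda(V_2) = V_1 \oplus V_2 \oplus V_1$. If $r \leq 2^{n-1}$, then $V_r$ is inflated from $C_{2^{n-1}}$ and every summand of $\Lambda(V_r)$ is some $V_i$ with $i \leq 2^{n-1}$; for such $i$, being non-induced over $C_{2^n}$ coincides with being non-induced over $C_{2^{n-1}}$ (both mean $i$ is odd), so $\phi(V_r)$ is unchanged by passing to the smaller group and the bound follows from the outer induction hypothesis.

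If $r = 2^{n-1}+s$ with $s \geq 1$, invoke Theorem~\ref{thm:main}(d). Because $\Omega_{2^n}$ preserves the parity of dimension (as $2^n$ is even) and $N$ ignores projective summands, summing the main formula over the grading and using multiplicativity of $N$ under $\otimes$ yields the clean recursion
\[
\phi(V_{2^{n-1}+s}) = \phi(V_s) \cdot \phi(V_{2^{n-1}-s}).
\]
The previous case bounds each factor, giving $\phi(V_r) \leq 2^{(s+1)/2} \cdot 2^{(2^{n-1}-s+1)/2} = 2^{2^{n-2}+1}$, which is $\leq 2^{(r+1)/2}$ exactly when $s \geq 1$.

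The only mild obstacle is the degenerate case $s=0$, where the recursion $\phi(V_{2^{n-1}}) = \phi(V_0)\,\phi(V_{2^{n-1}})$ is useless; this is why the range $r \leq 2^{n-1}$ must be handled first, by reducing to the smaller cyclic group. Beyond that bookkeeping and the parity observation on $\Omega_{2^n}$, the estimate is a direct application of Theorem~\ref{thm:main}(d).
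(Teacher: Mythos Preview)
Your argument is correct and follows the same route as the paper: reduce to the indecomposable case via multiplicativity of the odd-summand count under $\otimes$, then use the main formula $\lambda_t(V_{2^{n-1}+s}) =_{\proj} \lambda_{t^2}^{\Omega}(V_s)\lambda_t^{\Omega}(V_{2^{n-1}-s})$ together with the fact that $\Omega_{2^n}$ preserves parity to obtain $\phi(V_{2^{n-1}+s}) = \phi(V_s)\phi(V_{2^{n-1}-s})$, and induct. The only cosmetic difference is that the paper inducts on $r$ and, for $r\ge 2$, chooses $n$ minimal so that $1\le s\le 2^{n-1}$, thereby sidestepping the degenerate case $s=0$ that you handle separately by passing to the quotient group.
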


\begin{proof}
Let $f(V)$ denote the number of non-induced summands in
$\Lambda(V)$. The comment about the tensor product above shows that
$f(V\oplus W)=f(V)f(W)$. The proposed bound also turns sums into
products, so it suffices to consider the case when $V=V_r$ is
indecomposable and show that $f(V_r) \leq 2^{\frac{1}{2}(r+1)}$.

We use induction on $r$. Since the cases $r = 0,1$ are trivial
we can assume that $r \ge 2$, and we can write $r = 2^{n-1}+s$, where
$1 \le s \le 2^{n-1}$. Setting $t=1$ in the formula
$\lambda _t(V_{2^{n-1}+s}) =_{\proj} \lambda _{t^2}^{\Omega}(V_{s})
\lambda _t^{\Omega}(V_{2^{n-1}-s})$ and using induction we obtain
\begin{eqnarray*}
f(\Lambda(V_{2^{n-1}+s})) & = & f(\lambda _1(V_{2^{n-1}+s})) =
f(\lambda_{1}^{\Omega}(V_{s}))f( \lambda _1^{\Omega}(V_{2^{n-1}-s}))=
f(\lambda_{1}(V_{s}))f( \lambda _1(V_{2^{n-1}-s})) \\
& \leq & 2^{\frac{1}{2}(2^{n-1}-s+1)} 2^{\frac{1}{2}(s+1)} =
2^{\frac{1}{2}(2^{n-1}+2)} \leq 2^{\frac{1}{2}(2^{n-1}+s+1)}.
\end{eqnarray*}
\end{proof}

For an indecomposable $kG$-module $V_r$, if we assume that the group
acts faithfully then $r > \frac{1}{2} |G|$, and the dimension of any
direct summand of $\Lambda(V_r)$ is at most $|G|$. It follows that the dimension of the
non-induced part of $\Lambda(V_r)$ divided by the dimension of the
whole of $\Lambda(V_r)$ is at most $2^{\frac{1}{2}(3-r)}r$.

%%%%%%%%%%%%%%%%%%%%%%%%%%%%%%%%%%%%%%%%%%%%%%%%%%%%%%%%%%%%%%%%%%%%%%%%%%%%%%%%

\section{Remarks}

\noindent (a) As already mentioned in the introduction, the formula in
Theorem~\ref{ext} reduces the computation of
$\Lambda^r(V_{2^{n-1}+s})$ to the computation of tensor products
of exterior powers of modules of smaller
dimension. Since tensor products can easily be determined recursively
(see Section~\ref{sec:cyc2}), this gives an efficient recursive method for
calculating the decomposition of exterior powers of modules for cyclic
2-groups into indecomposables. A program based
on this recurrence relation was implemented in GAP~\cite{GAP} by the
first author.

A restriction on the use of Theorem~\ref{ext} is the
growth of the multiplicities of direct summands of the form
$V_{2^m}$. For example, the multiplicity of $V_{128}$ as a direct
summand of $\Lambda^{57}(V_{147})$ is $8197519886357582844587268803532720$.
If one is only interested in the non-induced part of
$\Lambda^r(V_{2^{n-1}+s})$ the recurrence relation can be applied modulo
induced summands  to keep the multiplicities relatively small.

Together with the results in \cite{scyclic}, the
recurrence relation in Theorem~\ref{ext} also provides an algorithm
for computing the decomposition of the symmetric powers $S^r(V_m)$
into indecomposables for arbitrary $m$ and $r$.

\begin{example}
We determine the decomposition of $\Lambda^6(V_{13})$ into indecomposables:
\begin{eqnarray*}
\Lambda^6(V_{13}) & \cong_{\proj} & \Omega_{16}^{0+6}(\Lambda^0(V_5) \otimes \Lambda^6(V_3))
\oplus \Omega_{16}^{1+4}(\Lambda^1(V_5) \otimes \Lambda^4(V_3)) \oplus \\
&& \Omega_{16}^{2+2}(\Lambda^2(V_5) \otimes \Lambda^2(V_3))
\oplus \Omega_{16}^{3+0}(\Lambda^3(V_5) \otimes \Lambda^0(V_3)) \\
& \cong_{\proj} & (\Lambda^2(V_5) \otimes V_3) \oplus \Omega_{16}(\Lambda^3(V_5)).
\end{eqnarray*}
Furthermore $\Lambda^3(V_5) \cong \Lambda^2(V_5)$, by duality,
and
\[
\Lambda^2(V_{5}) \cong \Omega_{8}^{0+2}(\Lambda^0(V_1) \otimes \Lambda^2(V_3))
\oplus \Omega_{8}^{1+0}(\Lambda^1(V_1) \otimes \Lambda^0(V_3)) \cong
V_3 \oplus \Omega_8(V_1) \cong V_3 \oplus V_7.
\]
Thus
\begin{eqnarray*}
\Lambda^6(V_{13}) & \cong_{\proj} & (V_3 \oplus V_7) \otimes V_3 \oplus
\Omega_{16}(V_3 \oplus V_7) \cong_{\proj} (V_3 \otimes V_3) \oplus (V_7 \otimes V_3) \oplus
\Omega_{16}(V_3 \oplus V_7) \\
& \cong_{\proj} & (V_1 \oplus 2 V_4) \oplus (V_5 \oplus 2 V_8)
\oplus (V_{13} \oplus V_9) \cong_{\proj} V_1 \oplus 2 V_4 \oplus V_5 \oplus 2 V_8
\oplus V_{9} \oplus V_{13}.
\end{eqnarray*}
Comparing dimensions, we obtain
$\Lambda^6(V_{13}) \cong V_1 \oplus 2 V_4 \oplus V_5 \oplus 2 V_8
\oplus V_{9} \oplus V_{13} \oplus 104 V_{16}$.
\end{example}

\medskip

\noindent (b) Obviously, Gow and Laffey's formula for exterior
squares~\cite[Theorem~2]{gowlaffey} is the special case $r=2$
of Theorem~\ref{ext}. Furthermore, setting $s=2^{n-1}-1$ in
Theorem~\ref{ext} gives Kouwenhoven's formula
\cite[Theorem~3.4]{kouwenhoven} for $\Lambda^r(V_{q-1})$ when $q$ is a
power of $2$ (for all $r$).

In \cite[Theorem~3.5]{kouwenhoven} Kouwenhoven proved the
formula
\begin{equation} \label{eq:qplus1}
\lambda_t(V_{q+1}-V_{q-1}) = 1 + (V_{q+1}-V_{q-1})t + t^2
\end{equation}
in $a(C_{q \cdot p})[[t]]$, where $q$ is a power of a prime $p$;
see Section~\ref{sec:key} for a definition of $\lambda_t$. We
will show how this can be derived from
Theorem~\ref{ext} in the case that $p=2$. Note that, since the
dimension series of the two sides match, it is sufficient to prove
this modulo projectives. The theorem gives us: 
\[
\lambda_t(V_{2^{n-1}+1}) =(1+V_{2^n-1}t^2)\lambda^{\Omega_{2^n}}_t(V_{2^{n-1}-1})
\] modulo $V_{2^n}$ and
\[
\lambda_t(V_{2^{n-1}-1}) = (1+V_{2^{n-1}-1}t)\lambda^{\Omega_{2^{n-1}}}_{t^2}(V_{2^{n-2}-1})
\]
modulo $V_{2^{n-1}}$. The latter can be written as
\[
\lambda_t(V_{2^{n-1}-1}) =
(1+V_{2^{n-1}-1}t)(\lambda^{\Omega_{2^{n-1}}}_{t^2}(V_{2^{n-2}-1})+V_{2^{n-1}}f(t)) 
\]
exactly (the last term can be written inside the parentheses, since
$(1+V_{2^{n-1}-1}t)$ is invertible). Applying $\Omega_{2^n}$ in odd
degrees we obtain 
\[
\lambda_t^{\Omega_{2^n}}(V_{2^{n-1}-1}) =
(1+V_{2^{n-1}+1}t)(\lambda^{\Omega_{2^{n-1}}}_{t^2}(V_{2^{n-2}-1})+V_{2^{n-1}}f(t)). 
\]
Substituting into the left hand side of (\ref{eq:qplus1}) yields
\[
(1+V_{2^n-1}t^2)(1+V_{2^{n-1}+1}t)(1+V_{2^{n-1}-1}t)^{-1}
\]
modulo $V_{2^n}$, and it is easy to verify that
\[
(1+V_{2^n-1}t^2)(1+V_{2^{n-1}+1}t)=(1+V_{2^{n-1}-1}t)(1 +
(V_{2^{n-1}+1}-V_{2^{n-1}-1})t + t^2) 
\]
modulo $V_{2^n}$.

\medskip

\noindent (c) Theorem~\ref{ext} can also be used to calculate the
Adams operations on the Green ring $a(C_{2^n})$, as was shown to us by
Roger Bryant and Marianne Johnson.  For each $r>0$ and $j \in
\{1,\dots, 2^n\}$, 
define an element $\psi^r_\Lambda(V_j) \in a(C_{2^n})$ by
\[
\psi_\Lambda^1(V_j) - \psi_\Lambda^2(V_j)t + \psi_\Lambda^3(V_j)t^2 - \cdots = \psi_{\Lambda,t}(V_j)
= {\textstyle \frac{d}{dt}} \log \lambda_t(V_j).
\]
By $\Z$-linear extension there is a map
$\psi^r_\Lambda: a(C_{2^n}) \rightarrow a(C_{2^n})$, called the $r$th
Adams operation defined by the exterior powers. It can
be shown that if~$r$ is odd, then $\psi^r_\Lambda$ is the identity map
on $a(C_{2^n})$ and that $\psi^{2^i r}_\Lambda = \psi^{2^i}_\Lambda$ for all
$i \ge 1$, so all that remains is to describe~$\psi^{2^i}_\Lambda$
for $i \ge 1$ (see~\cite{bryantjohnson1} and \cite{bryantjohnson2} for
details). For $j \geq 2$, write $j=2^m+s$ with $m \ge 0$ and
$1 \le s \le 2^m$; then
\[
\psi^{2^i}_\Lambda(V_{2^m+s}) = 2 \psi^{2^{i-1}}_\Lambda(V_s) +
\psi^{2^i}_\Lambda(V_{2^m-s})
\]
for all $i \ge 2$ and
\[
\psi^2_\Lambda(V_{2^m+s}) = 2V_{2^{m+1}} - 2V_{2^{m+1}-s} +
\psi^2_\Lambda(V_{2^m-s}).
\]
This can be seen by applying the definition of the Adams operations to
the Hilbert series form of Theorem~\ref{ext}, obtaining (in the
obvious notation) 
\[
\psi_{\Lambda,t}(V_{2^m+s})=_{\proj} 2t \psi^{\Omega}_{\Lambda,t^2}(V_s)  + \psi^{\Omega}_{\Lambda,t}(V_{2^m-s}).
\]

%%%%%%%%%%%%%%%%%%%%%%%%%%%%%%%%%%%%%%%%%%%%%%%%%%%%%%%%%%%%%%%%%%%%%%%%%%%%%%%%

\end{document}